\newtheorem{theorem}{Theorem}[section]
\newtheorem{corollary}[theorem]{Corollary}
\newtheorem{proposition}[theorem]{Proposition}
\newtheorem{lemma}[theorem]{Lemma}
\theoremstyle{definition}
\newtheorem{definition}[theorem]{Definition}
\newtheorem{example}[theorem]{Example}
\newtheorem{question}[theorem]{Question}
\newtheorem{remark}[theorem]{Remark}
\DeclareMathOperator{\Ass}{Ass}
\DeclareMathOperator{\Ann}{Ann}
\DeclareMathOperator{\Id}{Id}
\DeclareMathOperator{\Spec}{Spec}
\DeclareMathOperator{\End}{End}
\numberwithin{equation}{section}
\begin{document}

\title{Covering conditions for ideals in semirings}

\author{Peyman Nasehpour}

\email{nasehpour@gmail.com}

\address{Peyman Nasehpour\orcidlink{0000-0001-6625-364X}\\ Academic Advisor and Education Mentor \\ Education Department \\ The New York Academy of Sciences}

\subjclass[2020]{16Y60, 13A15.}

\keywords{prime avoidance, ringoids, compactly packed, covering conditions, Kasch semirings}

\begin{abstract}
In this paper, we prove prime avoidance for ringoids. We also generalize McCoy's and Davis' prime avoidance theorems in the context of semiring theory. Next, we proceed to define and characterize compactly packed semirings and show that a commutative semiring is compactly packed if and only if each prime ideal is the radical of a principal ideal. Finally, we calculate the set of zero-divisors of some monoid semimodules over compactly packed semirings in terms of their prime ideals.
\end{abstract}

\maketitle

\section{Introduction}

Prime avoidance is a fundamental result in commutative ring theory, discussed in many commutative algebra textbooks including \cite{BrunsHerzog1998,Eisenbud1995,Kaplansky1974,Kemper2011,Matsumura1989}, which has applications in algebraic geometry (cf. \cite[p. 200]{BrunsGubeladze2009}, \cite[p. 204]{Eisenbud2005}, \cite[p. 555]{GoertzWedhorn2020}, \cite[p. 44]{Schenck2003}, and \cite{Vakil2025}), algebraic number theory (\cite[p. 65]{Ash2010}), group schemes of finite type (cf. \cite[p. 597]{Milne2017}) and homological algebra (cf. \cite[p. 145]{BrodmannSharp2012} and \cite[p. 490]{Rotman2009}).

In his celebrated paper, McCoy \cite[Theorem 1]{McCoy1957} proved that if $I$ and $\{A_i\}_{i=1}^n$ are ideals of a commutative ring $R$ with $I \subseteq \bigcup_{i=1}^n A_i$ such that $I$ is not contained in the union of any $n-1$ of ideals $A_i$, then there is a positive integer $k$ such that $I^k \subseteq \bigcap_{i=1}^n A_i$. A corollary to this beautiful result is a covering condition for radical ideals in this sense that if an ideal $I$ is contained in a union of finitely many radical ideals $\{A_i\}_{i=1}^{n}$, then $I$ is contained in at least one of them \cite[p. 163-4]{McAdam1974}. What is known as ``prime avoidance'' in commutative algebra is, in fact, a corollary to the latter result, and this is why Kaplansky attributes it to McCoy. However, the only book that the author found discussing prime avoidance in the context of noncommutative algebra is Rowen's work (see Proposition 2.12.7 in \cite{Rowen1991}). The main purpose of this paper is to investigate these covering conditions in ringoid theory, especially within the framework of semiring theory. Since the language of ringoid theory is not standardized yet, we need to establish some terminology.

Let us recall that a set $M$ with a map $M \times M \rightarrow M$ defined by $(x,y) \mapsto xy$ is a magma \cite[Definition 1.1]{Serre1992}. A bimagma $(R,+,\cdot)$ is a ringoid \cite[p. 206]{Rosenfeld1968} if multiplication ``$\cdot$'' distributes on addition ``$+$'' from both sides, i.e., for all $r$, $s$, and $t$ in $R$, \[r(s+t) = rs + rt \text{~and~} (s+t)r = sr + tr.\] If $(R,+,\cdot)$ is a ringoid then $(R,+)$ and $(R,\cdot)$ are the additive and multiplicative magmas of $R$, respectively. Ringoids are the most general among all ring-like algebraic structures although different from the so-called ringoids investigated in \cite{Sehgal1964}. In \S\ref{sec:ringoids}, we discuss ringoids and provide some general examples. For instance, in Proposition \ref{Endomorphismringoidmedialmagma}, we show that if $(M,+)$ is a medial magma, then $\End(M)$ equipped with componentwise addition and composition of functions is a ringoid. Recall that a magma $(M,+)$ is said to be medial \cite{FateloMartins2016} if \[(a+b)+(c+d) = (a+c)+(b+d), \qquad\forall~a,b,c,d \in M.\] Also, note that if $M$ is a magma, we denote the set of all magma endomorphisms of $M$ by $\End(M)$. In this section, we also introduce nonassociative hemirings that are examples of ringoids with zero. Similar to the theory of general algebras over fields, we construct nonassociative hemialgebras over semifields (see Theorem \ref{finitedimensionalhemiringoversemifield}). We recall that a commutative semiring is a semifield if each ot its nonzero elements is multiplicatively invertible (see p. 52 in \cite{Golan1999(b)}). Other examples of ringoids with zero include Newman algebras (check Theorem \ref{newmanalgebrasnasemirings}).

In \S\ref{sec:idealsofringoids}, we discuss ideals of ringoids. Similar to semiring theory, we say a nonempty subset $I$ of a ringoid $R$ is a left (right) ideal of $R$ if $(I,+)$ is a submagma of $(R,+)$ and $ra \in I$ ($ar \in I$), for all $r \in R$ and $a \in I$. A nonempty subset $I$ of a ringoid $R$ is an ideal of $R$ if it is both a left and a right ideal of $R$. We collect all left (right) ideals of a ringoid $R$ in $\Id_l(R)$ ($\Id_r(R)$). Also, we collect all ideals of a ringoid $R$ in $\Id(R)$.

A left (right) ideal $I$ of a ringoid $R$ is subtractive if $x+y \in I$ and $x\in I$ imply that $y\in I$, and $x+y \in I$ and $y\in I$ imply that $x\in I$, for all $x,y\in R$ (see Definition \ref{subtractiveidealsdef}). A ringoid is subtractive if each of its ideals is subtractive. In Theorem \ref{Avoidanceofsubtractiveideals}, we show that if $A$, $B$, and $C$ are left (right) ideals of a ringoid $(R,+,\cdot)$ such that $C \subseteq A \cup B$ and $A$ and $B$ are subtractive, then either $C \subseteq A$ or $C \subseteq B$.

A left (right) ideal $I$ of a ringoid $R$ is proper if $I \neq R$. Inspired by the definition of prime ideals in nonassociative ring theory (see Definition 1 in \cite{Behrens1956}), we say a proper ideal $P$ of a ringoid $R$ is prime if $(a)(b) \subseteq P$ implies either $a \in P$ or $b \in P$, for all $a$ and $b$ in $R$ (see Definition \ref{primeidealringoiddef}). We collect all prime ideals of a ringoid $R$ in $\Spec(R)$.

In Theorem \ref{primeidealringoidthm}, we prove that a proper ideal $P$ of a ringoid $R$ is prime if and only if $IJ \subseteq P$ implies either $I \subseteq P$ or $J \subseteq P$, for all ideals $I$ and $J$ of $R$. In Theorem \ref{Behrenlemma}, we verify that if $\{P_i\}_{i=1}^{n}$ is a family of prime ideals of a ringoid $R$ and $I$ is an ideal of $R$ such that for each $i \in \mathbb{N}_n$ there is an element $a_i \in I$ with $a_i \in P_i$ but $a_i \notin P_k$ for each $k \neq i$, then for each $l \in \mathbb{N}_n$, there is an element $b_l$ belonging to $I$ and each $P_i$ except $P_l$. We use this, to generalize prime avoidance for ringoids in the following sense:

Let $P_1, \dots, P_n$ be subtractive prime ideals of a ringoid $(R,+,\cdot)$. Let $I$ be an ideal of $R$ with $I \nsubseteq P_i$, for each $i \in \mathbb{N}_n$. Then in Theorem \ref{PALforringoids}, we prove that there is an element $a \in I$ avoiding to be in any of the $P_i$s, i.e., $a \in I \setminus \bigcup_{i=1}^{n} P_i$. Note that this is a generalization of Behrens' prime avoidance for nonassociative rings \cite[Satz 3]{Behrens1956}.

Semirings, which have recently gained the attention of many algebraists and computer scientists, are interesting generalizations of rings and bounded distributive lattices. They also have essential applications across various fields in science and engineering \cite{Glazek2002,Golan1999(b),GondranMinoux1984,HebischWeinert1998}. In this paper, a ringoid $(S,+,\cdot)$ is a semiring if $(S,+,0)$ is a commutative monoid, $(S,\cdot,1)$ is a monoid, and 0 $(\neq$ 1) is an absorbing element of $(S,\cdot)$, i.e., $s \cdot 0 = 0 = 0 \cdot s$, for all $s \in S$. If all above conditions are satisfied, but the multiplication is not necessarily associative, we say that $S$ is a nonassociative semiring. Here is a good place to recall the definition of semimodules also. Let $S$ be a semiring. A commutative monoid $(M,+,0)$ is, by definition, an $S$-semimodule \cite[\S14]{Golan1999(b)} if there exists a scalar multiplication function \[\lambda: (S,M) \rightarrow M \quad \text{defined~by} \quad \lambda(s,m) = sm,\] with the following properties for all $s,t \in S$ and $m,n \in M$:

\begin{enumerate}
	\item $(st)m = s(tm)$ and $1m = m$,
	\item $(s+t)m = sm + tm$ and $s (m+n) = sm + sn$,
	\item $0 m = 0$ and $s 0 = 0$.
\end{enumerate}

In \S\ref{sec:pat}, we prove a generalization of prime avoidance in another direction. In Theorem \ref{PALforsemirings}, as a generalization to prime avoidance in noncommutatve algebra \cite[Proposition 2.12.7]{Rowen1991}, we prove that if an ideal of a semiring $S$ is contained in a union of $n$ subtractive ideals of $S$ and at least $n-2$ of them are prime, then it is contained in some of them. Note that in Example \ref{nosubtractivenoPAL}, we construct a general example to show that the prime avoidance for semirings may not hold if the prime ideals $\{P_i\}_{i=1}^{n}$ in Theorem \ref{PALforsemirings} are not subtractive.

As an application of Theorem \ref{PALforsemirings}, we prove that if $S$ is a semiring and satisfies a.c.c. on $M$-annihilator ideals of $S$, where $M$ is an $S$-semimodule, and also, if $I$ is an ideal of $S$ and a subset of a finite union of $M$-annihilator ideals of $S$, then $I$ is contained in an $M$-annihilator and prime ideal of $S$ (see Theorem \ref{containedannihilatorcontainedprime}). Note that by definition, an ideal $I$ of $S$ is an $M$-annihilator ideal if $I = \Ann(X)$, for some nonempty $X \subseteq M$, where $M$ is an $S$-semimodule. Also, recall that similar to module theory, an $S$-semimodule $M$ over a semiring $S$ has a.c.c. (d.c.c.) on its $M$-annihilator ideals if any ascending (descending) chain of $M$-annihilator ideals of $S$ terminates at some point. 

We also add that the topological interpretation of prime avoidance for semirings is that if $S$ is a subtractive semiring and a finite number of points are contained in an open subset then they are contained in a smaller principal open subset (check Theorem \ref{primeavoidancetopology}).

Next in this section, we proceed to prove Davis' version for prime avoidance in the context of semiring theory (check Theorem \ref{Davisprimeavoidancelemma}). Davis' version of prime avoidance in commutative ring theory has some applications in grades of ideals (see Theorem 124 and Theorem 125 in \cite{Kaplansky1974}).  

In \S\ref{sec:finiteunions}, we generalize McCoy's results on finite unions of ideals in commutative semirings. Note that a semiring $S$ is commutative if $ab = ba$, for all $a$ and $b$ in $S$. In Theorem \ref{finiteintersectionmccoythm}, we show that if $n \geq 3$ is a positive integer, and $I$ and $\{A_i\}_{i=1}^n$ are ideals of a subtractive commutative semiring $S$ such that the covering $I \subseteq  \bigcup_{i=1}^n A_i$ is efficient, then there is a positive integer $k$ such that $I^k \subseteq  \bigcap_{i=1}^n A_i$. Two corollaries to this result are two covering conditions which we call ``McAdam's radical ideal avoidance for semirings'' (see Corollary \ref{mcadamradicalidealavoidancelemma}) and ``McCoy's semiprime avoidance for semirings'' (see Corollary \ref{mccoysemiprimeavoidancelemma}). Recall that a covering $I \subseteq \bigcup_{i=1}^{n} A_i$ is efficient if $I$ is not contained in the union of any $n-1$ of $A_i$s \cite[\S2]{McAdam1974}. At the end of \S\ref{sec:finiteunions}, we use McCoy's semiprime avoidance for semirings to show that if $T$ is a multiplicatively closed subset of a subtractive commutative semiring $S$ and $\{P_i\}_{i=1}^{n}$ are $T$-semiprime and $2$-absorbing ideals of $S$ and $I$ is an ideal of $S$ such that $I \subseteq \bigcup_{i=1}^{n} P_i$, then $tI$ is contained in one the $P_i$s, for some $t \in T$ (see Theorem \ref{semiprimeavoidance2absorbing}). Note that if $S$ is a commutative semiring, $T$ a multiplicatively closed subset of $S$, and $P$ an ideal of $S$ with $P \cap T = \emptyset$, then we say $P$ is a $T$-semiprime ideal of $S$ if there is a $t \in T$ such that $s^2 \in P$ implies $ts \in P$, for all $s \in S$ (check Definition \ref{mcssemiprimedef}). We recall that a proper ideal $P$ of a semiring $S$ is semiprime if for any ideal $I$ of $S$, $I^2 \subseteq P$ implies $I \subseteq P$ (see p. 90 in \cite{Golan1999(b)}). Also, note that a proper ideal $P$ of a commutative semiring $S$ is semiprime if and only if $s^2 \in P$ implies $s \in P$, for all $s \in S$. We also recall that a proper ideal $I$ of a commutative semiring $S$ is a $2$-absorbing ideal of $S$ if $xyz\in I$ implies either $xy\in I$, or $yz \in I$, or $xz\in I$ (cf. Definition 2.1 in \cite{Darani2012}). For more on $2$-absorbing ideals of semirings, refer to \cite{BehzadipourNasehpour2020}.

We devote \S\ref{sec:compactlypackedsemirings} to ``arbitrary prime avoidance property'' (see \cite{Chen2021} and Definition 2.2 in \cite{Nasehpour2025}) which is defined in the context of semiring theory as follows:

\begin{itemize}
	\item For an arbitrary family of prime ideals $\{P_\alpha\}$ and any ideal $I$ of a semiring $S$, the inclusion $I \subseteq  \bigcup_{\alpha} P_\alpha$ implies $I \subseteq P_\alpha$, for some $\alpha$.
\end{itemize}

 The ring version of this property has been investigated in \cite{PakalaShores1981,ReisViswanathan1970,Smith1971,UregenTekirOral2016}. In Definition \ref{compactlypackedsemirings}, we call semirings with this covering condition ``compactly packed'', and next, we prove (see Theorem \ref{generalizedpal}) the following statements are equivalent for a commutative semiring $S$:

\begin{enumerate}
	\item The semiring $S$ is compactly packed.
	
	\item For an arbitrary family of prime ideals $\{P_\alpha\}$ and any prime ideal $Q$ of $S$, the inclusion $Q \subseteq  \bigcup_{\alpha} P_\alpha$ implies $Q \subseteq P_\alpha$, for some $\alpha$.
	
	\item Each prime ideal of $S$ is the radical of a principal ideal in $S$.
	
	\item Each radical ideal is the radical of a principal ideal.	
\end{enumerate}

A semiring is considered proper if it is not a ring \cite[p. 9]{HebischWeinert1998}. Our generalizations will be more meaningful if we can provide some compactly packed proper semirings. This task is accomplished in Example \ref{examplescompactlypacked}, Proposition \ref{examplecompactlypacked2}, and Proposition \ref{examplecompactlypacked3}.

In \S\ref{sec:zerodivisors}, we discuss zero-divisors on semimodules. Recall that an element $s$ in a commutative semiring $S$ is a zero-divisor on the $S$-semimodule $M$ if there is a nonzero element $m$ in $M$ such that $sm = 0$. All zero-divisors on $M$ are collected in $Z(M)$. In Proposition 1.15 in \cite{AtiyahMacdonald2016}, it is proved that the set of zero-divisors of a commutative ring is a union of radicals of annihilator ideals. As a generalization to this result, we show that the set of zero-divisors of a semimodule over a commutative semiring is a union of radicals of $M$-annihilator ideals of $S$ (see Proposition \ref{zerodivisorssemimoduleunionradicalideals}). Recall that a semimodule $M$ over a commutative semiring $S$ has ``very few zero-divisors'' if the set of zero-divisors $Z(M)$ of $M$ is a finite union of prime ideals in $\Ass(M)$ (see Definition 2.7 in \cite{Nasehpour2021}). As a generalization of Corollary 2.9 in \cite{Nasehpour2021}, we show that if $S$ has a.c.c. and d.c.c. on its $M$-annihilator ideals, then $M$ has very few zero-divisors (see Theorem \ref{accdccannidealsvfzd}).

Recall that a prime ideal $P$ of a commutative semiring $S$ is an associated prime ideal of an $S$-semimodule $M$ if there is an $m \in M$ such that \[P = \Ann(m) = \{s \in S: sm = 0\}.\] All associated prime ideals of a semimodule $M$ are collected in $\Ass(M)$ \cite{Nasehpour2021}. An $S$-semimodule $M$ has Property (A) if each finitely generated ideal $I$ of $S$ with $I \subseteq Z(M)$ has a nonzero annihilator in $M$ (Definition 2.14 in \cite{Nasehpour2021}). In Theorem \ref{amongmostusefulkaplansky}, we use semiring version of prime avoidance to prove that if $I$ is an ideal of a commutative semiring $S$ and $M$ a Noetherian $S$-semimodule, then $I \subseteq Z(M)$ implies that $I \subseteq P$, for some $P \in \Ass(M)$. This fact that a Noetherian module over a Noetherian commutative ring has Property (A) is considered ``among the most useful'' results in commutative algebra by Kaplansky \cite[p. 56]{Kaplansky1974}.

In \S\ref{sec:zerodivisors}, we proceed to generalize another result of Davis and prove that if a commutative semiring $S$ has few zero-divisors, then its total quotient semiring $Q(S)$ is semi-local (check Theorem \ref{fzdtquotientsemiringsemilocal}). Recall that a commutative semiring is semi-local if it has finitely many maximal ideals. Moreover, in Definition \ref{semiringfewzerodivisors}, we say a commutative semiring $S$ has few zero-divisors if $Z(S)$ is a finite union of subtractive prime ideals of $S$. A corollary to Theorem \ref{fzdtquotientsemiringsemilocal} is that if a commutative semiring $S$ has a.c.c. on its annihilator ideals, then $Q(S)$ is a semi-local Kasch semiring (see Corollary \ref{acctqsemilocalKasch}). We add that we define a commutative semiring $S$ to be a Kasch semiring if each maximal ideal of $S$ is of the form $\Ann(x)$, for some $x \in S$ (see Definition \ref{Kaschsemiringdef}).

In the final phase of \S\ref{sec:zerodivisors}, we prove that if $S$ is a compactly packed commutative semiring, $M$ an $S$-semimodule with Property (A) and $Z(M) = \bigcup_{\alpha \in A} P_\alpha$, where $P_\alpha$s are prime ideals of $S$, and $G$ is a cancellative torsion-free commutative monoid, then $Z(M[G]) =  \bigcup_{\alpha \in A} P_\alpha[G]$ (see Theorem \ref{zerodivisorsmonoidsemimodule1}). Similarly in Theorem \ref{zerodivisorsmonoidsemimodule2}, we show that if $S$ is a compactly packed Noetherian commutative semiring, $M$ an $S$-semimodule, and $G$ is a cancellative torsion-free commutative monoid, then $Z(M[G]) = \bigcup_{P \in \Ass(M)} P[G]$.

Golan's book \cite{Golan1999(b)} is a general reference for semiring theory, and our terminology closely follows it.

\section{Ringoids and some examples}\label{sec:ringoids}

A bimagma $(R,+,\cdot)$ is a ringoid \cite[p. 206]{Rosenfeld1968} if multiplication ``$\cdot$'' distributes on addition ``$+$'' from both sides. We say a ringoid $R$ is with zero if there is an element $0$ in $R$ such that $0$ is a neutral element for the magma $(R,+)$ and an absorbing element for the magma $(R,\cdot)$. In the following, we give a general example of a ringoid:

\begin{proposition}\label{Endomorphismringoidmedialmagma}
	Let $(M,+)$ be a medial magma. Then, $\End(M)$ equipped with componentwise addition and composition of functions is a ringoid such that $\End(M)$ equipped with $+$ is medial and equipped with $\circ$ a monoid.
\end{proposition}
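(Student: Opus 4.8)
The plan is to verify all three assertions by pointwise computation on self-maps of $M$, the only genuinely delicate point being that componentwise addition stays inside $\End(M)$; this is exactly where mediality enters.

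First I would check that $\End(M)$ is closed under the two operations. For composition this is immediate: if $f,g \in \End(M)$, then $(f\circ g)(a+b) = f(g(a+b)) = f(g(a)+g(b)) = f(g(a))+f(g(b))$, so $f \circ g$ is again an endomorphism, and here only the endomorphism property of $f$ and $g$ is used. For componentwise addition the computation is $(f+g)(a+b) = f(a+b)+g(a+b) = (f(a)+f(b))+(g(a)+g(b))$, whereas $(f+g)(a)+(f+g)(b) = (f(a)+g(a))+(f(b)+g(b))$; these two agree precisely because $M$ is medial, applied to the four elements $f(a)$, $f(b)$, $g(a)$, $g(b)$. I expect this to be the crux of the argument: without mediality the function $f+g$ need not respect addition at all, so $+$ would fail to be an operation on $\End(M)$.

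Next I would establish the medial and monoid structures. Mediality of $(\End(M),+)$ follows by evaluating both sides of $(f+g)+(h+k) = (f+h)+(g+k)$ at an arbitrary $x \in M$ and invoking mediality of $M$ on the four values $f(x)$, $g(x)$, $h(x)$, $k(x)$. For the monoid claim, associativity of $\circ$ holds for arbitrary self-maps of $M$ (so in particular on $\End(M)$), and the identity map $\mathrm{id}_M$, which is trivially an endomorphism, serves as the two-sided unit.

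Finally I would verify the two distributive laws pointwise, since a ringoid requires distributivity of $\circ$ over $+$ from both sides. Right distributivity $(g+h)\circ f = (g\circ f)+(h\circ f)$ is purely formal, following from the definitions of $+$ and $\circ$ alone: evaluating at $x$ gives $(g+h)(f(x)) = g(f(x))+h(f(x))$ on both sides. Left distributivity $f \circ (g+h) = (f\circ g)+(f\circ h)$ is where the endomorphism property of $f$ is needed, since evaluating at $x$ yields $f(g(x)+h(x)) = f(g(x))+f(h(x))$. Combining the closure, mediality, monoid, and distributivity statements then gives that $\End(M)$ is a ringoid of the asserted form, completing the proof.
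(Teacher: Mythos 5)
Your proof is correct and follows essentially the same route as the paper's: mediality of $M$ is used exactly where the paper uses it (to show $f+g$ is again an endomorphism, via the identity $(f(x)+f(y))+(g(x)+g(y)) = (f(x)+g(x))+(f(y)+g(y))$, and to get mediality of $(\End(M),+)$ pointwise), the monoid structure and right distributivity are noted as formal, and left distributivity is derived from the endomorphism property of $f$. Your write-up is somewhat more explicit than the paper's, which leaves the closure and right-distributivity steps as ``routine,'' but there is no difference in substance.
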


\begin{proof}
Since the magma $M$ is medial, for $f$ and $g$ in $\End(M)$, we have \[(f(x) + f(y)) + (g(x) + g(y)) = (f(x) + g(x)) + (f(y) + g(y)).\] Applying this, one can easily prove that $(\End(M),+)$ is a medial magma. It is routine to see that \[(\End(M),\circ,\iota_M)\] is a monoid. The right distributive law can be checked easily. Now, we prove that $\circ$ distributes on $+$ from the left side. Let $f$, $g$, and $h$ be endomorphisms of $M$ and consider an arbitrary element $m$ of $M$. Observe that \begin{align*}
		(f \circ (g+h))(m) & = f((g+h)(m)) = f(g(m) + h(m)) \\ & = f(g(m)) + f(h(m)) = (f \circ g + f \circ h)(m). 
	\end{align*} This completes the proof.
\end{proof}

Let $(M,+,0)$ be a unital magma and $\End_0(M)$ the set of magma endomorphisms $f$ with $f(0) = 0$. In the following, we give a general example for ringoids with zero:

\begin{proposition}
Let $(M,+,0)$ be a medial and unital magma. Then, $\End_0(M)$ equipped with $+$ and $\circ$ is a ringoid with zero.
\end{proposition}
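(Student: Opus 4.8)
The plan is to build directly on the preceding Proposition \ref{Endomorphismringoidmedialmagma}, which already establishes that $\End(M)$ with componentwise addition and composition is a ringoid whose additive magma is medial and whose multiplicative magma is a monoid. The task here is therefore not to re-prove distributivity or associativity from scratch, but rather to verify that restricting attention to the subset $\End_0(M)$ of endomorphisms fixing $0$ yields a genuine \emph{ringoid with zero}; that is, that $\End_0(M)$ is closed under the two operations, and that it contains a distinguished element $0$ which is additively neutral and multiplicatively absorbing.

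First I would check closure. Given $f,g \in \End_0(M)$, the componentwise sum satisfies $(f+g)(0) = f(0)+g(0) = 0+0 = 0$ (using that $0$ is the neutral element of the unital magma), so $f+g \in \End_0(M)$; similarly $(f \circ g)(0) = f(g(0)) = f(0) = 0$, so $\End_0(M)$ is closed under composition. Since the distributive laws and the medial identity are universally quantified over \emph{all} elements of $M$, they hold automatically on the subset $\End_0(M)$ once closure is established, and the identity endomorphism $\iota_M$ trivially fixes $0$, so $(\End_0(M),\circ,\iota_M)$ remains a monoid. Thus $\End_0(M)$ inherits the ringoid structure from $\End(M)$.

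Next I would exhibit the zero element. The natural candidate is the constant map $z \colon M \to M$ with $z(m) = 0$ for all $m$. I would first confirm $z \in \End_0(M)$: it is an endomorphism because $z(x+y) = 0 = 0+0 = z(x)+z(y)$ (again invoking that $0$ is additively neutral), and clearly $z(0)=0$. Then I would verify that $z$ is additively neutral in $\End_0(M)$: for any $f \in \End_0(M)$ and any $m \in M$, $(f+z)(m) = f(m)+z(m) = f(m)+0 = f(m)$, and symmetrically $z+f = f$; here it is essential that $0$ be a two-sided neutral element of $(M,+)$, which unitality grants. Finally I would check that $z$ is multiplicatively absorbing: $(f \circ z)(m) = f(z(m)) = f(0) = 0 = z(m)$ uses precisely the defining property $f(0)=0$ that singles out $\End_0(M)$, while $(z \circ f)(m) = z(f(m)) = 0 = z(m)$ holds trivially.

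The main subtlety to watch, rather than a genuine obstacle, is the interplay between the two properties defining $\End_0(M)$ and the zero element: the right-absorption $f \circ z = z$ is exactly where the condition $f(0)=0$ is indispensable, so this is the step that justifies passing from $\End(M)$ to the subset $\End_0(M)$ in order to obtain a zero. I would also be mindful that the additive neutrality of $z$ leans on $0$ being neutral on \emph{both} sides in $(M,+)$; since a unital magma need not be associative or commutative, I would state the neutrality explicitly as $0+m = m = m+0$ and use it in that form. With closure, the inherited ringoid axioms, and these three verifications for $z$ in hand, the conclusion that $(\End_0(M),+,\circ)$ is a ringoid with zero follows immediately.
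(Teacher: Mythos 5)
Your proof is correct and follows essentially the same route as the paper: the paper's own (one-line) proof simply exhibits the zero function $z(m)=0$ as the zero of $\End_0(M)$, leaving closure and the neutrality/absorption checks implicit, exactly the details you fill in. Your observation that $f\circ z = z$ is precisely where $f(0)=0$ is needed is the right key point and matches the reason the paper passes from $\End(M)$ to $\End_0(M)$.
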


\begin{proof}
The zero of the ringoid $\End_0(M)$ is the zero function defined by $0(m) = 0$, for all $m \in M$.
\end{proof}

Now, we proceed to give more examples for ringoids with zero. Examples of ringoids with zero include neorings introduced in \S2 of Bruck's paper \cite{Bruck1955}. However, by having a family of ringoids with zero, it is possible to contruct new ones from the old ones. For instance, let $\{R_\alpha\}_\alpha$ be a family of ringoids. Define addition and multiplication on the direct product $\prod_{\alpha} R_\alpha$ of the given ringoids componentwisely. Additionally, let each ringoid $R_\alpha$ be with zero. Define their direct sum $\bigoplus_{\alpha} R_\alpha$ consisting those elements in $\prod_{\alpha} R_\alpha$ whose components are zero except finitely many of them. It is straightforward to see the following: 

\begin{proposition}
If $\{R_\alpha\}_\alpha$ is a family of ringoids, then their direct product $\prod_{\alpha} R_\alpha$ equipped with componentwise addition and multiplication is also a ringoid. On the other hand, if each $R_\alpha$ is with zero, then $\prod_{\alpha} R_\alpha$ is with zero and $\bigoplus_{\alpha} R_\alpha$ is a subringoid of $\prod_{\alpha} R_\alpha$.
\end{proposition}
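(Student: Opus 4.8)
The plan is to verify the ringoid axioms coordinatewise for the product and then to check that the direct sum is closed under both operations, hence a subringoid. First I would observe that componentwise addition and multiplication are genuine binary operations on $\prod_{\alpha} R_\alpha$, since each coordinate operation lands back in $R_\alpha$; thus $(\prod_{\alpha} R_\alpha, +, \cdot)$ is a bimagma. It then remains only to confirm the two distributive laws. Writing elements as tuples $x = (x_\alpha)_\alpha$, the identity $x(y+z) = xy + xz$ reduces in each coordinate to $x_\alpha(y_\alpha + z_\alpha) = x_\alpha y_\alpha + x_\alpha z_\alpha$, which holds because $R_\alpha$ is a ringoid; the right distributive law follows symmetrically. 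Hence $\prod_{\alpha} R_\alpha$ is a ringoid.

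For the second assertion, suppose each $R_\alpha$ carries a zero $0_\alpha$. I would propose $0 := (0_\alpha)_\alpha$ as the candidate zero of the product. Both the neutrality of $0$ for $+$ and its absorbing property for $\cdot$ reduce coordinatewise to the corresponding properties of $0_\alpha$ in $R_\alpha$, so $\prod_{\alpha} R_\alpha$ is a ringoid with zero.

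Finally I would show that $\bigoplus_{\alpha} R_\alpha$, the set of tuples of finite support, is closed under both operations, where for $x = (x_\alpha)_\alpha$ I write $\supp(x) = \{\alpha : x_\alpha \neq 0_\alpha\}$. Closure under addition is immediate: for $x, y$ of finite support one has $\supp(x+y) \subseteq \supp(x) \cup \supp(y)$, because whenever $x_\alpha = y_\alpha = 0_\alpha$ we have $x_\alpha + y_\alpha = 0_\alpha$ by neutrality, and the right-hand side is finite. The one point deserving care, and the only place where the hypothesis that each $R_\alpha$ is with zero is genuinely invoked for multiplication, is closure under the product: if $x_\alpha = 0_\alpha$ then $x_\alpha y_\alpha = 0_\alpha$, and if $y_\alpha = 0_\alpha$ then $x_\alpha y_\alpha = 0_\alpha$, in both cases by the absorbing property of $0_\alpha$. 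Consequently $\supp(xy) \subseteq \supp(x) \cap \supp(y)$, which is again finite. Thus $\bigoplus_{\alpha} R_\alpha$ is closed under $+$ and $\cdot$, so it is a subringoid of $\prod_{\alpha} R_\alpha$.

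I do not anticipate a genuine obstacle here, since the argument is entirely a matter of transporting the coordinate axioms through the componentwise operations. The only substantive observation is that the absorbing property of the zeros $0_\alpha$ is precisely what forces finite support to be preserved under multiplication in the direct sum; without a zero this last step would not even be meaningful.
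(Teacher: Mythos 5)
Your proof is correct and is exactly the coordinatewise verification the paper has in mind: the paper states this proposition without proof, declaring it straightforward, and your argument (distributivity and the zero checked componentwise, plus the support inclusions $\supp(x+y) \subseteq \supp(x) \cup \supp(y)$ and $\supp(xy) \subseteq \supp(x) \cap \supp(y)$ for closure of the direct sum) fills in precisely those routine details. Your closing observation that the absorbing property of the zeros is what makes the direct sum closed under multiplication is a nice touch, matching why the paper only forms $\bigoplus_{\alpha} R_\alpha$ in the presence of zeros.
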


\begin{definition}
We say a ringoid $H$ with zero is an NA-hemiring (i.e., a nonassociative hemiring) if $(H,+,0)$ is a commutative monoid and $0$ is an absorbing element for the magma $(H,\cdot)$.
\end{definition}
 
Similar to the theory of finite dimensional algebras over fields \cite{DrozdKirichenko1994}, we construct nonassociative semialgebras over semifields. To achieve this, let $K$ be a semifield and consider the $K$-semimodule $H = K^n$, where $n \in \mathbb{N}$. Let $\{a_i\}_{i=1}^{n}$ be the standard basis for $H$, i.e., the family of $n$ elements in $H$, where the $i$th coordinate of $a_i \in H$ is $1$ and the other coordinates are $0$ for each $i \in \mathbb{N}_n$. In order to define a multiplication $\cdot$ on $H$, first we choose $n^2$ elements $b_{ij}$ in $H$ and set \[a_i a_j = b_{ij}, \qquad\forall~i,j \in \mathbb{N}_n.\] Each $b_{ij}$ is determined by the $n$ elements $\{\gamma^k_{ij}\}_{k=1}^{n} \subseteq K$ with $b_{ij} = \sum_{k=1}^{n} \gamma^k_{ij} a_k$. Now, if $a$ and $b$ are arbitrary elements of $H$, we have $a = \sum_{i=1}^{n} \alpha_i a_i$ and $b = \sum_{j=1}^{n} \beta_j a_j$, where the coefficients $\{\alpha_i\}_{i=1}^n$ and $\{\beta_j\}_{j=1}^n$ are uniquely determined. Now, we define the multiplication of $a$ and $b$ as follows:

\begin{align} \left( \sum_{i=1}^{n} \alpha_i a_i \right) \left( \sum_{j=1}^{n} \beta_j a_j \right) = \sum_{i,j = 1}^{n} \alpha_i \beta_j b_{ij} = \sum_{i,j,k = 1}^{n} \alpha_i \beta_j \gamma^k_{ij} a_k .\label{semialgebramultiplication} \end{align}

\begin{theorem}\label{finitedimensionalhemiringoversemifield}
	Let $K$ be a semifield. On $H = K^n$, define addition componentwisely and multiplication as in (\ref{semialgebramultiplication}). Then, $H$ is a nonassociative hemiring with the following property: \[(\alpha a)b = a (\alpha b) = \alpha (ab), \qquad\forall~\alpha \in K,~\forall~a,b \in H.\]
\end{theorem}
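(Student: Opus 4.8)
The plan is to verify each clause of the definition of an NA-hemiring directly from the defining formula (\ref{semialgebramultiplication}), noting throughout that the only facts about $K$ that are actually used are the commutative-semiring axioms, namely associativity and commutativity of $+$ and of $\cdot$, two-sided distributivity, and the absorbing zero; the invertibility of nonzero elements of the semifield plays no role in this statement.

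First I would record that $(H,+,0) = (K^n,+,0)$ is a commutative monoid, being a finite power of the commutative monoid $(K,+,0)$, with $0 = \sum_i 0 \cdot a_i$. Because $\{a_i\}_{i=1}^n$ is the standard basis, every element of $H$ has a unique expression $\sum_i \alpha_i a_i$, so formula (\ref{semialgebramultiplication}) assigns to each ordered pair $(a,b)$ a single well-defined element, and hence $(H,\cdot)$ is a genuine magma.

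Next I would check the two distributive laws. Writing $a = \sum_i \alpha_i a_i$, $b = \sum_j \beta_j a_j$, and $c = \sum_j \delta_j a_j$, so that $b+c = \sum_j (\beta_j + \delta_j) a_j$, the product $a(b+c)$ expands by (\ref{semialgebramultiplication}) to $\sum_{i,j,k} \alpha_i(\beta_j + \delta_j)\gamma^k_{ij} a_k$. Distributivity in $K$ rewrites $\alpha_i(\beta_j + \delta_j)$ as $\alpha_i\beta_j + \alpha_i\delta_j$, and commutativity and associativity of $+$ in $K$ then let me split the triple sum as $ab + ac$. The right distributive law $(b+c)a = ba + ca$ follows from the symmetric computation, so $(H,+,\cdot)$ is a ringoid.

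Finally I would confirm that $0$ is absorbing and establish the scalar identity. Setting all coefficients of $a$ (resp. of $b$) equal to $0$ in (\ref{semialgebramultiplication}) and invoking the absorbing zero of $K$ collapses every summand, yielding $0 \cdot b = 0 = a \cdot 0$; together with the previous paragraphs this shows that $H$ is a nonassociative hemiring. For the bilinearity claim, writing $\alpha a = \sum_i (\alpha\alpha_i) a_i$ and $\alpha b = \sum_j (\alpha\beta_j) a_j$, each of $(\alpha a)b$, $a(\alpha b)$, and $\alpha(ab)$ expands to a sum over $k$ whose coefficient of $a_k$ is, respectively, $\sum_{i,j}(\alpha\alpha_i)\beta_j\gamma^k_{ij}$, $\sum_{i,j}\alpha_i(\alpha\beta_j)\gamma^k_{ij}$, and $\alpha\sum_{i,j}\alpha_i\beta_j\gamma^k_{ij}$, and commutativity and associativity of multiplication in $K$ make these three coincide. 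I do not expect a serious obstacle: the entire argument is bookkeeping with the structure constants $\gamma^k_{ij}$, and the only point that deserves genuine care is keeping the multi-indices straight while invoking the commutative-semiring axioms of $K$ rather than any associativity or commutativity of the newly defined product, none of which is asserted.
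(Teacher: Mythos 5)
Your proposal is correct and takes essentially the same route as the paper's proof: a direct coordinate-wise verification of the NA-hemiring axioms, with the two distributive laws established by expanding $a(b+c)$ and $(b+c)a$ through the structure constants $\gamma^k_{ij}$ exactly as the paper does. The only difference is one of thoroughness — you explicitly verify well-definedness, the absorbing zero, and the scalar identity $(\alpha a)b = a(\alpha b) = \alpha(ab)$, which the paper dismisses as clear or ``routine and omitted.''
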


\begin{proof}
	It is clear that $(H,+,\mathbf{0})$ is a commutative monoid and $(H,\cdot)$ a magma. On the other hand, since each coordinate of $\mathbf{0} \in H$ is $0 \in K$, by the definition of multiplication in (\ref{semialgebramultiplication}), $\mathbf{0}$ is an absorbing element of the magma $(H,\cdot)$. The proof of the property \[(\alpha a)b = a (\alpha b) = \alpha (ab), \qquad\forall~\alpha \in K,~\forall~a,b \in H\] is routine and omitted. Now, let $a = \sum_{i=1}^{n} \alpha_i a_i$, $b = \sum_{j=1}^{n} \beta_j a_j$, and $c = \sum_{k=1}^{n} \gamma_k a_k$ be arbitrary elements of $H$. Observe that \[b+c = \sum_{j=1}^{n} (\beta_j + \gamma_j) a_j \text{~and~}\] \begin{align*}
		a(b+c) & = \left( \sum_{i=1}^{n} \alpha_i a_i\right) \left( \sum_{j=1}^{n} (\beta_j + \gamma_j) a_j \right) \\ & = \sum_{i,j,k = 1}^{n} \alpha_i (\beta_j + \gamma_j) \gamma^k_{ij} a_k \\ & = \sum_{i,j,k = 1}^{n} \alpha_i \beta_j \gamma^k_{ij} a_k + \sum_{i,j,k = 1}^{n} \alpha_i \gamma_j \gamma^k_{ij} a_k = ab + ac.
	\end{align*} Similarly, one can prove that $(b+c)a = ba + ca$. Thus $H$ is a nonassociative hemiring and the proof is complete. 
\end{proof}

\begin{example}\label{crossproductBooleansemiring}
Note that the multiplication in (\ref{semialgebramultiplication}) is not necessarily associative. For example, let $\mathbb{B} = \{0,1\}$ be the Boolean semifield. We define addition on $\mathbb{B}^3$ componentwisely and the cross product ``$\times$'' on $\mathbb{B}^3$ as follows: \[(u_1,u_2,u_3) \times (v_1,v_2,v_3) = (u_2v_3 + u_3 v_2, u_3 v_1 + u_1 v_3, u_1 v_2 + u_2 v_1).\] It is, then, easy to see that $(\mathbb{B}^3,+,\times)$ is a nonassociative hemiring. The cross product ``$\times$'' on $\mathbb{B}^3$ is not associative because \[(\mathbf{i} \times \mathbf{j}) \times \mathbf{j} = \mathbf{k} \times \mathbf{j} =  \mathbf{i} \text{~and~}\] \[\mathbf{i} \times (\mathbf{j} \times \mathbf{j}) = \mathbf{i} \times \mathbf{0} = \mathbf{0},\] where $\mathbf{i}$, $\mathbf{j}$, and $\mathbf{k}$ are the standard basis for $\mathbb{B}^3$.
\end{example}

Similar to ring and semiring theory, one can prove the following:

\begin{proposition}
	Let $H$ be an NA-hemiring and $X$ an indeterminate over $H$. Then, the set of all polynomials $H[X]$ and formal power series $H[[X]]$ equipped with their standard additions and multiplications are NA-hemirings.
\end{proposition}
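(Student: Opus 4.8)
The plan is to verify the NA-hemiring axioms for $H[X]$ directly, treating a polynomial as a formal expression $f = \sum_{i} a_i X^i$ with coefficients $a_i \in H$ (almost all zero), and keeping in mind throughout that no multiplicative associativity or commutativity is available or required. First I would dispose of the additive structure: since addition on $H[X]$ is defined coefficientwise and $(H,+,0)$ is a commutative monoid, the coefficientwise sum immediately makes $(H[X],+,\mathbf{0})$ a commutative monoid whose zero is the zero polynomial $\mathbf{0} = \sum_i 0\cdot X^i$. This step is a purely componentwise transport of the monoid axioms and presents no difficulty.

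The substantive step is two-sided distributivity. Writing the product by the Cauchy rule $(\sum_i a_i X^i)(\sum_j b_j X^j) = \sum_k \big(\sum_{i+j=k} a_i b_j\big) X^k$, I would compute, for $f = \sum_i a_i X^i$, $g = \sum_j b_j X^j$, and $h = \sum_j c_j X^j$, the $k$th coefficient of $f(g+h)$ as $\sum_{i+j=k} a_i(b_j + c_j)$. Applying the left distributive law of $H$ to each summand yields $\sum_{i+j=k}(a_i b_j + a_i c_j)$, and then the commutativity and associativity of $+$ in $H$ allow me to regroup this finite sum as $\big(\sum_{i+j=k} a_i b_j\big) + \big(\sum_{i+j=k} a_i c_j\big)$, which is exactly the $k$th coefficient of $fg + fh$. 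The right distributive law $(g+h)f = gf + hf$ is checked symmetrically from the right distributive law of $H$. The point requiring care is that every rearrangement is legitimized by the commutative-monoid structure of $(H,+,0)$ and the two-sided distributivity of $H$ alone, so that associativity of the multiplication is never invoked.

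Finally I would confirm that the zero polynomial is absorbing: the $k$th coefficient of $\mathbf{0}\cdot f$ is $\sum_{i+j=k} 0\cdot b_j = \sum_{i+j=k} 0 = 0$, using that $0$ is an absorbing element of the magma $(H,\cdot)$, and likewise $f\cdot\mathbf{0} = \mathbf{0}$; together with the monoid and distributivity checks, this shows that $H[X]$ is an NA-hemiring. For $H[[X]]$ the argument is verbatim the same, the one observation being that although a formal power series carries infinitely many coefficients, each Cauchy-product coefficient $\sum_{i+j=k} a_i b_j$ is still a \emph{finite} sum, so no convergence issue arises and all the identities above hold coefficient by coefficient. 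I do not anticipate a genuine obstacle here: the entire content is bookkeeping of finite sums, and the only thing to watch is to stay within the weaker NA-hemiring axioms and not inadvertently use associativity or commutativity of the product.
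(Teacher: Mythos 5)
Your proof is correct and is precisely the routine verification the paper has in mind: the paper states this proposition without proof, introducing it only with ``similar to ring and semiring theory, one can prove the following,'' and your coefficientwise check of the commutative-monoid, two-sided distributivity, and absorbing-zero axioms (never invoking associativity of multiplication) is that argument carried out in full. The only point you leave implicit is that the Cauchy product of two polynomials again has finite support, which follows from the same absorbing property of $0$ in $(H,\cdot)$ that you already use to show $\mathbf{0}\cdot f=\mathbf{0}$.
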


\begin{definition}
	Let $(R,+,\cdot)$ be a ringoid.
	
	\begin{enumerate}
		\item $R$ is multiplicatively unital, if there is an element $1 \in R$ with $1r = r1 = r$, for all $r \in R$.
		
		\item $R$ is multiplicatively idempotent if $rr = r$, for all $r \in R$.
		
		\item A multiplicatively unital ringoid $R$ with zero is complemented if for any $r \in R$, there is a unique $r' \in R$ with \[r\cdot r' = r' \cdot r = 0 \text{~and~} r + r' = r' + r = 1.\]
		
		\item A multiplicatively unital ringoid $(S,+,\cdot)$ with zero is an NA-semiring (i.e., a nonassociative semiring \cite{Dobbertin1979}) if $0 \neq 1$ and $(S,+,0)$ is a commutative monoid. One may consider the family of NA-semirings a generalization of NA-rings discussed extensively in Schafer's classical book \cite{Schafer1966}. 
		
		\item If the multiplication of an NA-semiring $S$ is associative by chance, then $S$ is simply called a semiring \cite{Golan1999(b)}.
	\end{enumerate}
\end{definition}

Now, we proceed to give a particular family of NA-semirings called Newman algebras \cite[13*]{Birkhoff1967}. Let us recall that an algebraic system $(N,+,\cdot,0,1)$ is a Newman algebra \cite{Newman1941,Newman1942} if the following conditions hold:

\begin{enumerate}
	\item $(N,+,0)$ is a unital magma,
	
	\item The element $1$ is a right identity element for the magma $(N,\cdot)$,
	
	\item Multiplication distributes on addition from both sides,
	
	\item For any $a \in N$, there is an element $a' \in N$ with $aa' = 0$ and $a+a' = 1$.
\end{enumerate}

\begin{theorem}\label{newmanalgebrasnasemirings}
	Any Newman algebra $(S,+,\cdot,0,1)$ with $0 \neq 1$ is a complemented  multiplicatively idempotent NA-semiring.
\end{theorem}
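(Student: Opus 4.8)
The plan is to verify, directly from the four Newman axioms, each clause in the definition of a complemented multiplicatively idempotent NA-semiring: that $(S,+,0)$ is a commutative monoid, that $0$ is absorbing and $1$ a two-sided identity for $\cdot$, that $rr=r$ for all $r$, and that the complement guaranteed by axiom (4) is two-sided and unique (the hypothesis $0\neq 1$ is given outright). The quickest win is multiplicative idempotency: for any $a\in S$, writing $1=a+a'$ and using that $1$ is a right identity together with left distributivity gives $a=a\cdot 1=a(a+a')=aa+aa'=aa+0=aa$, so $a=aa$. This already shows $S$ is multiplicatively idempotent and records the useful fact $a'a'=a'$.

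Next I would isolate the absorbing law as a consequence of a single cancellation principle. Since $0+0=0$, left distributivity yields $a\cdot 0=a(0+0)=a\cdot 0+a\cdot 0$, and right distributivity similarly gives $0\cdot a=0\cdot a+0\cdot a$; hence both $a\cdot 0$ and $0\cdot a$ are additively idempotent, and the entire absorbing law reduces to the claim that an element $t$ with $t+t=t$ must equal $0$. Along the way one extracts genuinely useful identities by multiplying the relation $a+a'=1$ on the right by $a'$ and by $a$: right distributivity and idempotency give $1\cdot a'=(a+a')a'=aa'+a'a'=0+a'=a'$ and $1\cdot a=(a+a')a=aa+a'a=a+a'a$. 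The first shows that $1$ is already a left identity on every complement $a'$, while the second reduces the general left-identity law $1\cdot a=a$ to the missing one-sided complement relation $a'a=0$.

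The heart of the argument --- and the step I expect to be the main obstacle --- is bootstrapping the remaining structure from these deliberately weak hypotheses: commutativity and associativity of $+$, the cancellation principle $t+t=t\Rightarrow t=0$, the two-sided complement relations $a'a=0$ and $a'+a=1$, the left identity $1\cdot a=a$, and uniqueness of complements. None of these is available for free, since $+$ is assumed only to be a unital magma and $\cdot$ only to possess a right identity, so the manipulations above repeatedly stall for want of associativity. This is precisely the content of Newman's classical theorem that a Newman algebra is a Boolean ring, and I would follow his sequence of lemmas (as in \cite{Newman1941,Newman1942} and \cite[13*]{Birkhoff1967}), ordering them carefully so that each invokes only previously established identities. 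Once commutativity and associativity of $+$ and the cancellation principle are in hand, the additive commutative monoid axioms hold, the absorbing law follows by applying $t+t=t\Rightarrow t=0$ to $a\cdot 0$ and $0\cdot a$, the identity $1\cdot a=a+a'a$ upgrades to $1\cdot a=a$, and uniqueness together with the two-sided relations shows $S$ is complemented; combined with the idempotency and $0\neq 1$ already noted, this establishes that $S$ is a complemented multiplicatively idempotent NA-semiring. I would deliberately not attempt to prove associativity of multiplication, since the statement only claims an NA-semiring, so the argument can stop short of the full Boolean-ring conclusion.
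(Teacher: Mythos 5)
Your proposal is correct and follows essentially the same route as the paper: both verify each clause of the definition (multiplicative idempotency, absorbing zero, two-sided identity, complementation, and the additive commutative monoid axioms) by appealing to Newman's classical structure lemmas as presented in \cite[13*]{Birkhoff1967} and \cite{Newman1941,Newman1942}. The only difference is cosmetic: you carry out the easy identities by hand (e.g.\ $a=a\cdot 1=aa+aa'=aa$, and the reduction of the absorbing law to the cancellation principle $t+t=t\Rightarrow t=0$) where the paper simply cites Birkhoff's T1 and T3, while both proofs defer the genuinely hard bootstrapping (commutativity and associativity of $+$, the relations $a'a=0$ and $a'+a=1$, and uniqueness of complements) to the same classical sources.
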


\begin{proof}
	By T1 on p. 49 in \cite{Birkhoff1967}, $S$ is multiplicatively idempotent. By T2 and N$4'$ and the discussion between the mentioned results, $S$ is complemented. By T3, the element $0$ is an absorbing element of the magma $(S,\cdot)$. By N$2'$, the element $1$ is an identity element for $(S,\cdot)$, and so, $(S,\cdot,1)$ is a unital magma. By Exercise 8 on p. 52 in \cite{Birkhoff1967} (for a complete proof, see P17 and P18 in \cite{Newman1941}), $(S,+,0)$ is a commutative monoid and the proof is complete.
\end{proof}

\section{Some remarks on ideals of ringoids}\label{sec:idealsofringoids}

\begin{proposition}
	Let $\{I_\alpha\}$ be a family of left (right) ideals of a ringoid $R$ such that $\bigcap_{\alpha} I_\alpha$ is nonempty. Then, $\bigcap_{\alpha} I_\alpha$ is a left (right) ideal of $R$. The smallest left (right) ideal of a ringoid $R$ containing a nonempty subset $X$ of $R$, denoted by $(X)_l$ ($(X)_r$), is \[(X)_l = \bigcap_{X \subseteq I \in \Id_l(S)} I \quad \left((X)_r = \bigcap_{X \subseteq I \in \Id_r(S)} I\right).\]
\end{proposition}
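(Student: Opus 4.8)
The plan is to handle the two assertions in turn, both by direct verification against the definition of a left ideal (the right-ideal case being entirely symmetric). Recall that a nonempty subset $I$ of $R$ is a left ideal precisely when $(I,+)$ is a submagma of $(R,+)$ and $ra \in I$ for all $r \in R$ and all $a \in I$.

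For the first statement, set $J = \bigcap_\alpha I_\alpha$ and assume $J \neq \emptyset$. To see that $J$ is closed under $+$, take $x,y \in J$; then $x,y \in I_\alpha$ for every $\alpha$, and since each $(I_\alpha,+)$ is a submagma of $(R,+)$ we obtain $x+y \in I_\alpha$ for every $\alpha$, whence $x+y \in J$. For the absorption property, take $r \in R$ and $a \in J$; then $a \in I_\alpha$ for every $\alpha$, and since each $I_\alpha$ is a left ideal we have $ra \in I_\alpha$ for every $\alpha$, so $ra \in J$. Thus $J$ is a left ideal of $R$.

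For the second statement, I would first observe that $R$ is itself a left ideal of $R$: the magma $(R,+)$ is trivially a submagma of itself, and $ra \in R$ for all $r,a \in R$ since multiplication is a map into $R$. Because $X \subseteq R$, the family $\{\, I \in \Id_l(R) : X \subseteq I \,\}$ is nonempty, so the intersection $(X)_l = \bigcap_{X \subseteq I \in \Id_l(R)} I$ is well defined. Since $X$ is nonempty and $X \subseteq I$ for every $I$ in this family, we have $X \subseteq (X)_l$, and in particular $(X)_l \neq \emptyset$. The first statement then applies and shows that $(X)_l$ is a left ideal of $R$. Minimality is now immediate: if $I'$ is any left ideal of $R$ with $X \subseteq I'$, then $I'$ is one of the ideals over which the defining intersection is taken, so $(X)_l \subseteq I'$.

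There is no serious obstacle in this argument; it is a routine transfer of the classical ring-theoretic fact, using only closure under $+$ and the one-sided absorption property. The single point deserving a moment's care is ensuring that the nonemptiness hypothesis of the first statement is genuinely met when it is invoked in the proof of the second. This is exactly what the inclusion $X \subseteq (X)_l$ guarantees, and it is precisely here that the assumption that $X$ is nonempty is used.
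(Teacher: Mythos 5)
Your proof is correct and follows essentially the same route as the paper's: direct verification that the intersection is closed under addition and under left (right) multiplication, followed by the observation that the intersection over all left (right) ideals containing $X$ is the smallest such ideal. You merely spell out the details the paper labels ``straightforward'' and ``evident'' (in particular, that $R \in \Id_l(R)$ makes the family nonempty and that $X \subseteq (X)_l$ supplies the nonemptiness needed to invoke the first part), which is a welcome but not substantively different elaboration.
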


\begin{proof}
It is straightforward to see that $\bigcap_{\alpha} I_\alpha$ is a submagma of $(R,+)$. Now, let $r \in R$ and $x \in \bigcap_{\alpha} I_\alpha$. Since $I_\alpha$ is a left (right) ideal of $R$ for each $\alpha$, the element $rx$ ($xr$) is in $I_\alpha$, for each $\alpha$. Therefore, $rx~(xr) \in \bigcap_{\alpha} I_\alpha$ showing that $\bigcap_{\alpha} I_\alpha$ is a left (right) ideal of $R$. Now, it is evident that $(X)_l = \bigcap_{X \subseteq I \in \Id_l(S)} I$ ($(X)_r = \bigcap_{X \subseteq I \in \Id_r(S)} I$) is the smallest left (right) ideal of $R$ containing $X$. This completes the proof.
\end{proof}

\begin{corollary}
	If $I_\alpha$ is an ideal of a ringoid $R$ for each $\alpha$, then their intersection $\bigcap_{\alpha} I_\alpha$ is also an ideal of $R$ if it is nonempty. In particular, if $X$ is a nonempty subset of $R$, the smallest ideal of $R$ containing $X$, denoted by $(X)$, exists and is \[(X) = \bigcap_{X \subseteq I \in \Id(S)} I.\]
\end{corollary}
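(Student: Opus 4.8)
The plan is to reduce everything to the preceding proposition, exploiting the fact that, by definition, an ideal of a ringoid is simultaneously a left ideal and a right ideal. So nothing new really needs to be proved about the intersection; I only have to apply the already-established left/right statement twice and then combine the two conclusions.

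First I would take a family $\{I_\alpha\}$ of ideals of $R$ whose intersection is nonempty. Viewing the family purely as a family of left ideals, the proposition gives that $\bigcap_\alpha I_\alpha$ is a left ideal of $R$. Viewing the very same family as a family of right ideals, the proposition gives that $\bigcap_\alpha I_\alpha$ is a right ideal of $R$. Being at once a left and a right ideal, the intersection is an ideal of $R$ by definition, which settles the first assertion of the corollary.

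For the ``in particular'' part, I would first observe that the indexing family is nonempty: $R$ is itself an ideal of $R$ containing $X$, so at least one ideal occurs in the intersection $\bigcap_{X \subseteq I \in \Id(R)} I$. Since every member of this family contains the nonempty set $X$, the intersection contains $X$ and is in particular nonempty; hence, by the first part of the corollary, it is an ideal of $R$. This ideal contains $X$ and is contained in every ideal that contains $X$, so it is the smallest ideal of $R$ containing $X$, which is exactly the displayed formula for $(X)$.

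I do not expect a genuine obstacle here, as the argument is entirely a bookkeeping reduction to the left/right case proved just above. The only point needing a moment's care is verifying the nonemptiness hypothesis required to invoke the proposition in the ``in particular'' part; this is supplied precisely by the presence of $R$ in the indexing family together with the standing assumption that $X$ is nonempty.
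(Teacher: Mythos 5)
Your proof is correct and matches the paper's approach: the paper states this corollary without a written proof, treating it as an immediate consequence of the preceding proposition, which is exactly the reduction you carry out (apply the left/right statement twice, then note nonemptiness in the ``in particular'' part via $R$ itself and $X \neq \emptyset$). Nothing is missing.
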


\begin{remark}
If in a ringoid $R$, there is an element $0$ being an absorbing element of the multiplicative magma of $R$, i.e., $r0 = 0r = 0$, for all $r \in R$, then each ideal $I$ of $R$ contains the element $0$, and so, any intersection of ideals of such ringoids is automatically nonempty. 
\end{remark}

\begin{definition}\label{pir}
The smallest ideal of a ringoid $R$ containing an element $x \in R$, denoted by $(x)$, is called the principal ideal of $R$ generated by $x$. A ringoid $R$ is called to be a principal ideal ringoid if each ideal of $R$ is principal. 
\end{definition}

Similar to ring and semiring theory, the addition of two nonempty subsets $I$ and $J$ of a ringoid $R$, denoted by $I+J$, is defined as follows: \[I+J = \{a+b : a\in I, b\in J\}.\]

\begin{definition}\label{additivelymedialringoid}
We define a ringoid $(R,+,\cdot)$ to be additively medial if the magma $(R,+)$ is medial.	
\end{definition}

\begin{proposition}
Let $I$ and $J$ be left (right) ideals of an additively medial ringoid $R$. Then, $I+J$ is also a left (right) ideal of $R$.
\end{proposition}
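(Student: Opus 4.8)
The plan is to verify directly the two defining conditions of a left ideal for the set $I+J$, namely that $(I+J,+)$ is a submagma of $(R,+)$ and that $rx \in I+J$ for every $r \in R$ and every $x \in I+J$. Since $I$ and $J$ are nonempty by the definition of ideals, $I+J$ is nonempty, so it suffices to check these two closure properties.

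First I would establish closure under addition, and this is the step where additive mediality does all the work. Take two typical elements $a_1 + b_1$ and $a_2 + b_2$ of $I+J$, with $a_1, a_2 \in I$ and $b_1, b_2 \in J$. Their sum is $(a_1 + b_1) + (a_2 + b_2)$, and the medial identity for $(R,+)$ lets me rearrange this into $(a_1 + a_2) + (b_1 + b_2)$. Since $(I,+)$ and $(J,+)$ are submagmas, $a_1 + a_2 \in I$ and $b_1 + b_2 \in J$, so the sum is again of the form (element of $I$) plus (element of $J$), hence lies in $I+J$.

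Next I would verify the absorption property using one-sided distributivity. For $r \in R$ and $x = a + b$ with $a \in I$ and $b \in J$, left distributivity gives $rx = r(a+b) = ra + rb$. Because $I$ is a left ideal, $ra \in I$, and because $J$ is a left ideal, $rb \in J$; therefore $rx = ra + rb \in I+J$. The right-ideal case is handled symmetrically, replacing left distributivity by the right distributive law $(a+b)r = ar + br$ and using that $ar \in I$, $br \in J$.

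I do not expect any genuine obstacle here: the proof is a direct application of the medial identity together with distributivity. The only point worth emphasizing is \emph{why} the hypothesis of additive mediality is present at all. It is needed precisely and only for closure under addition, where it converts $(a_1+b_1)+(a_2+b_2)$ into $(a_1+a_2)+(b_1+b_2)$; in a general additive magma that lacks this identity one cannot guarantee that the sum of two elements of $I+J$ again decomposes as an element of $I$ plus an element of $J$, so the statement would fail without it.
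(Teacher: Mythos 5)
Your proof is correct and follows essentially the same route as the paper's: the medial identity $(a_1+b_1)+(a_2+b_2)=(a_1+a_2)+(b_1+b_2)$ gives closure under addition, and one-sided distributivity gives the absorption property. Your closing remark on why mediality is indispensable is a nice addition, but the argument itself matches the paper's proof step for step.
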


\begin{proof}
Let $a+b$ and $c+d$ be elements of $I+J$. Since $R$ is additively medial, we have \[(a+b)+(c+d) = (a+c)+(b+d) \in I+J\] showing that $I+J$ is a submagma of $(R,+)$. Now, let $a+b$ be an element of $I+J$. Since $I$ and $J$ are left (right) ideals of $R$, we have: \[r(a+b) = ra + rb \in I+J \qquad ((a+b)r = ar + br \in I+J).\] Thus $I+J$ is a left (right) ideal of $R$ and the proof is complete.
\end{proof}

\begin{corollary}
Let $R$ be a ringoid such that its additive magma is a commutative semigroup. Then, addition of two left (right) ideals of $R$ is a left (right) ideal of $R$.
\end{corollary}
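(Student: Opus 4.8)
The plan is to reduce the statement to the proposition immediately preceding it by showing that the hypothesis forces $R$ to be additively medial in the sense of Definition~\ref{additivelymedialringoid}. That proposition already establishes that $I+J$ is a left (right) ideal of $R$ whenever the additive magma $(R,+)$ satisfies the medial identity $(a+b)+(c+d) = (a+c)+(b+d)$, so it suffices to verify that every commutative semigroup is medial.

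First I would record the elementary computation establishing this. Fix arbitrary $a,b,c,d \in R$. Using associativity of $+$ to suppress parentheses, then commutativity to swap the two interior summands, and associativity once more to reinstate parentheses, one obtains
\[
(a+b)+(c+d) = a+b+c+d = a+c+b+d = (a+c)+(b+d),
\]
which is exactly the medial law. Hence $(R,+)$ is a medial magma, and therefore $R$ is an additively medial ringoid. With this in hand the corollary is immediate: applying the preceding proposition to the left (right) ideals $I$ and $J$ of the additively medial ringoid $R$ shows that $I+J$ is a left (right) ideal of $R$.

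I do not expect any genuine obstacle here, since the entire content of the corollary is subsumed by the preceding proposition; the only thing to check is the one-line implication that commutativity together with associativity yields the medial identity. The sole subtlety worth flagging is that associativity alone is insufficient for the medial law — the rearrangement genuinely requires commutativity to move $c$ past $b$ — so both semigroup axioms are used in the reduction.
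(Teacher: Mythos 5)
Your proposal is correct and follows exactly the paper's own route: the paper's proof is the single sentence that a commutative semigroup is medial, hence $R$ is additively medial and the preceding proposition applies. Your explicit verification of the medial identity from commutativity and associativity simply spells out what the paper leaves implicit.
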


\begin{proof}
If the additive magma of $R$ is a commutative semigroup, then the ringoid $R$ is additively medial. 
\end{proof}

The multiplication of two nonempty subsets $I$ and $J$ of a ringoid $R$, denoted by $IJ$, is defined as follows: \[IJ = \{ab : a \in I, b \in J\}.\]

\begin{proposition}\label{multiplicationidealsubsetintersection}
Let $I$ be a right and $J$ be a left ideal of a ringoid $R$. Then, $IJ \subseteq I \cap J$.
\end{proposition}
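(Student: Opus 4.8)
The plan is to verify the inclusion element by element, relying directly on the one-sided ideal axioms and the definition $IJ = \{ab : a \in I, b \in J\}$. A generic element of the product $IJ$ has the form $ab$ with $a \in I$ and $b \in J$, so it suffices to show that every such product lies in both $I$ and $J$ simultaneously.

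First I would use that $I$ is a right ideal. Since $b$ belongs to $J$ and $J \subseteq R$, I may regard $b$ as an element of the ambient ringoid $R$. The defining property of a right ideal asserts that $ar \in I$ for every $a \in I$ and every $r \in R$; taking $r = b$ gives $ab \in I$. Symmetrically, I would use that $J$ is a left ideal: since $a \in I \subseteq R$, I may regard $a$ as an element of $R$, and the left ideal property $rb \in J$ for all $r \in R$ and $b \in J$, applied with $r = a$, yields $ab \in J$.

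Combining the two memberships gives $ab \in I \cap J$ for every $a \in I$ and $b \in J$. Since $ab$ was an arbitrary element of $IJ$, this establishes $IJ \subseteq I \cap J$.

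I do not expect any genuine obstacle here: the entire content is the observation that an element of $I$ multiplied by an arbitrary element of $R$ remains in $I$, while an arbitrary element of $R$ multiplied by an element of $J$ remains in $J$, so a product $ab$ inherits membership from both sides at once. The only point worth keeping in mind is that $IJ$ here denotes the literal set of products $\{ab\}$ rather than the ideal it generates, so no closure under addition needs to be invoked, and nonemptiness of $IJ$ is automatic because $I$ and $J$ are nonempty by the definition of an ideal.
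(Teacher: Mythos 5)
Your proof is correct and is essentially the paper's own argument: the paper writes it in set notation as $IJ \subseteq IR \subseteq I$ and $IJ \subseteq RJ \subseteq J$, which unpacks to exactly your element-wise verification that $ab \in I$ by the right ideal property of $I$ and $ab \in J$ by the left ideal property of $J$. Your closing remark that $IJ$ is the literal set of products (so no additive closure is needed) is a correct reading of the paper's definition and does not change the argument.
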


\begin{proof}
Since $I$ is a right ideal of $R$, we have \[IJ \subseteq IR \subseteq I.\] Similarly, since $J$ is a left ideal of $R$, we have $IJ \subseteq RJ \subseteq J$. This completes the proof.
\end{proof}

Eilenberg and Sch\"{u}tzenberger  defined and investigated subtractive submonoids of commutative monoids in \cite{EilenbergSchutzenberger1969}. We add that subtractive subsemigroups have applications in automata theory \cite{Sakarovitch2009} and subtractive ideals of semirings are defined similarly \cite[p. 66]{Golan1999(b)}. In the following, we define subtractive ideals of ringoids:

\begin{definition}\label{subtractiveidealsdef}
	Let $(R,+,\cdot)$ be a ringoid. A left (right) ideal $I$ of $R$ is subtractive if $x+y \in I$ and $x\in I$ imply that $y\in I$, and $x+y \in I$ and $y\in I$ imply that $x\in I$, for all $x,y\in R$.
\end{definition}

\begin{proposition}\label{intersectionsubtractiveideals}
	If $I_\alpha$ is a left (right) subtractive ideal of a ringoid $R$ for each $\alpha \in A$ and $\bigcap_{\alpha \in A} I_\alpha \neq \emptyset$, then $\bigcap_{\alpha \in A} I_\alpha$ is also a subtractive left (right) ideal of $R$.
\end{proposition}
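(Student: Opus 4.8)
The plan is to separate the two things that must be checked: that $\bigcap_{\alpha \in A} I_\alpha$ is a left (right) ideal, and that it is moreover subtractive. The first half is already available to us. Since each $I_\alpha$ is in particular a left (right) ideal and the intersection is assumed nonempty, the earlier proposition on intersections of ideals guarantees that $\bigcap_{\alpha \in A} I_\alpha$ is itself a left (right) ideal of $R$. So the only substantive work is to verify the two subtractivity implications of Definition \ref{subtractiveidealsdef} for the intersection.

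For subtractivity I would argue pointwise across the index set $A$. Suppose $x+y \in \bigcap_{\alpha \in A} I_\alpha$ and $x \in \bigcap_{\alpha \in A} I_\alpha$. Then for every $\alpha \in A$ we have simultaneously $x+y \in I_\alpha$ and $x \in I_\alpha$, and since each $I_\alpha$ is subtractive this forces $y \in I_\alpha$. As this holds for all $\alpha \in A$, we conclude $y \in \bigcap_{\alpha \in A} I_\alpha$. The second required implication, that $x+y \in \bigcap_{\alpha \in A} I_\alpha$ together with $y \in \bigcap_{\alpha \in A} I_\alpha$ forces $x \in \bigcap_{\alpha \in A} I_\alpha$, follows by the identical argument with the roles of $x$ and $y$ interchanged, again invoking the subtractivity of each individual $I_\alpha$.

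There is no genuine obstacle here; the statement is a routine transfer of a pointwise property through an intersection, entirely parallel to the proof that the intersection of ideals is an ideal. The one point worth flagging is the role of the nonemptiness hypothesis $\bigcap_{\alpha \in A} I_\alpha \neq \emptyset$: it is exactly what lets us invoke the earlier intersection proposition and hence know that the intersection is a left (right) ideal to begin with, whereas the subtractivity verification itself consumes nothing beyond that.
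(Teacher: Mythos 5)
Your proof is correct and is exactly the routine argument the paper has in mind --- indeed, the paper's own proof is just the word ``Straightforward.'' Invoking the earlier intersection-of-ideals proposition for the ideal part and then transferring subtractivity pointwise across the index set is precisely the intended verification.
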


\begin{proof}
	Straightforward.
\end{proof}

\begin{definition}
We define a ringoid $R$ with zero $0$ to be entire if $ab = 0$ implies either $a = 0$ or $b = 0$, for all $a$ and $b$ in $R$. We call $R$ to be zerosumfree if $a + b = 0$ implies $a = b = 0$, for all $a$ and $b$ in $R$. We say $R$ is left (right) austere if the only subtractive left (right) ideals of $R$ are $\{0\}$ and $R$. 
\end{definition}

The following is a nonassociative version of LaGrassa's general example for austere semirings, i.e., semirings without nontrivial subtractive ideals (cf. Example 6.24 in \cite{Golan1999(b)} and Example 1.1 in \cite{LaGrassa1995}):  

\begin{proposition}\label{austereringoid} 
Let $(M,\cdot,1)$ be a unital magma with an absorbing element $0 \neq 1$. Define addition on $M$ by $a+b = 0$, for all $a$ and $b$ in $M$. Suppose that $z \notin M$ and set $S = M \cup \{z\}$. Extend addition and multiplication of $M$ to $S$ as follows:

\begin{itemize}
	\item $s + z = z + s = s$, for all $s \in S$.
	\item $s z = z s= z$, for all $s \in S$.
\end{itemize}

Then, the following statements hold:

\begin{enumerate}
	\item $(S,+,\cdot,z,1)$ is a zerosumfree entire nonassociative semiring.
	\item $S$ is left austere, i.e., any proper left ideal $I$ of $S$ which is nonzero in $S$, i.e., $\{z\} \neq I \neq S$, is not subtractive.
\end{enumerate}
\end{proposition}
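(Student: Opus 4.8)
The plan is to verify the semiring axioms in part (1) by a direct case analysis, and then to exploit the extreme degeneracy of the addition on $M$ to settle part (2). The one point to keep in mind throughout is that the additive identity of $S$ is $z$, not the absorbing element $0$ of the magma $M$: in the notation $(S,+,\cdot,z,1)$ it is $z$ that plays the role of the semiring zero, while $0 \in M$ is merely an ordinary element of $M$ that happens to be absorbing for the original multiplication.

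For part (1), I would first check that $(S,+,z)$ is a commutative monoid. Commutativity is immediate from the defining rules, and associativity reduces to a short case analysis: since $z$ is an additive identity it drops out of any sum in which it appears, so only the case $x,y,w \in M$ needs attention, and there both $(x+y)+w$ and $x+(y+w)$ collapse to $0$ because every $M$-sum equals $0$ and $0+s = 0$ for $s \in M$. Next I would note that $(S,\cdot,1)$ is a unital magma with $z$ absorbing (both by construction) and $z \neq 1$ (as $z \notin M \ni 1$). Distributivity is the only genuinely computational step: left distributivity $r(s+t) = rs+rt$ splits according to which of $r,s,t$ equal $z$; the cases with some argument equal to $z$ are trivial because $z$ is additively neutral and multiplicatively absorbing, while the case $r,s,t \in M$ uses that $0$ is absorbing in $M$ (so the left side is $r\cdot 0 = 0$) and that $rs,rt \in M$ (so the right side is an $M$-sum, hence $0$). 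Right distributivity is symmetric. Finally, both zerosumfree and entire follow from a single observation: $M$ is closed under $+$ and under $\cdot$ but $z \notin M$, so a sum or a product can equal $z$ only if one of its operands was already $z$.

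The substance lies in part (2). Let $I$ be a left ideal with $\{z\} \neq I \neq S$. First I would record that every nonempty left ideal contains $z$: for any $a \in I$ the left-ideal property gives $za \in I$, and $za = z$. Since $I \neq \{z\}$, there is an element $a \in I \cap M$. Because $(I,+)$ is a submagma of $(S,+)$, closure yields $a+a \in I$, that is, $0 \in I$. Now comes the key step: for an arbitrary $y \in M$ the defining rule of addition on $M$ gives $a+y = 0 \in I$ while $a \in I$, so if $I$ were subtractive it would follow that $y \in I$. As $y \in M$ was arbitrary, this forces $M \subseteq I$, and together with $z \in I$ we would obtain $S = M \cup \{z\} \subseteq I$, contradicting $I \neq S$. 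Hence $I$ cannot be subtractive.

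I do not expect a serious obstruction; the entire argument is driven by the fact that $a+y = 0$ in $M$ regardless of $y$, which converts subtractivity directly into the inclusion $M \subseteq I$. If I had to name the crux, it is this propagation step, and the only place demanding genuine care is the bookkeeping distinguishing the semiring zero $z$ from the element $0 \in M$, together with the observation that $0 \in M$ necessarily lands in any left ideal that meets $M$.
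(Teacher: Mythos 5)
Your proposal is correct and follows essentially the same route as the paper: both arguments hinge on the fact that $a+y=0$ for all $a,y\in M$, so that once $0\in I$ is established (you via additive closure $a+a=0$, the paper via $0a=0$ and the left-ideal property), subtractivity would force elements outside $I$ into $I$, contradicting $\{z\}\neq I\neq S$. The only differences are cosmetic: the paper fixes a single $s\notin I$ and derives an immediate contradiction, while you let $y$ range over $M$ to conclude $I=S$; your more detailed verification of part (1) matches what the paper leaves as ``straightforward.''
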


\begin{proof}
The first statement is straightforward. Let $I$ be a proper left ideal of $S$ with $\{z\} \neq I$. Let $s \in S$ but $s \notin I$. Also, let $a \in I$ with $a \neq z$. Since $a \in I$ and $a \neq z$, we have $0a\in I$ with $0a = 0$. It follows that $0 \in I$. On the other hand, $a+s = 0 \in I$, while $s \notin I$. So, $I$ is not subtractive. This completes the proof.
\end{proof}

If the additive operation $+$ on a ringoid $(R,+,\cdot)$ is associative, then the term \[x_1 + x_2 + \dots + x_n\] can be unambiguously computed (\cite[Theorem 1]{Bourbaki1998}). However, if $+$ is not associative, then the value of the term \[x_1 + x_2 + \dots + x_n\] is ambiguous and in order to clarify the value of the term, we need to specify the order of operations using parentheses. Based on this discussion, let \[A(x_1,x_2,\dots,x_n)\] represent the set of all possible ways to parenthesize the term $x_1 + x_2 + \dots + x_n$ to be able to compute it unambiguously (cf. Corollary 6.2.3(iii) in \cite{Stanley2001ii}). 

\begin{proposition}\label{generalsubtractivity}
	Let $I$ be a subtractive left (right) ideal of a ringoid $(R,+,\cdot)$. Let $\{x_i\}_{i=1}^{n}$ be a finite family of elements in $R$ and $x$ an arbitrary element of the set \[A(x_1,x_2,\dots,x_n).\] If $x \in I$ and $x_i \in I$ for each $i \neq m$, then $x_m \in I$. 
\end{proposition}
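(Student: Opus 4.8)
The plan is to argue by strong induction on $n$, exploiting the recursive structure of the set $A(x_1,x_2,\dots,x_n)$ of parenthesizations. The point is that an element $x \in A(x_1,\dots,x_n)$ is obtained by choosing an outermost addition, so that $x = u + v$ for some $1 \le k \le n-1$ with $u \in A(x_1,\dots,x_k)$ and $v \in A(x_{k+1},\dots,x_n)$; this is exactly the correspondence between full parenthesizations and binary trees with ordered leaves underlying the Catalan count cited just before the statement. Such a decomposition partitions the index set $\mathbb{N}_n$ into the prefix $\{1,\dots,k\}$ and the suffix $\{k+1,\dots,n\}$, and the distinguished index $m$ lies in exactly one of the two blocks.

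Before the induction I would record one auxiliary fact: if every entry of a family $\{y_j\}_{j=1}^{p}$ lies in $I$ and $w \in A(y_1,\dots,y_p)$, then $w \in I$. This follows at once from the fact that $(I,+)$ is a submagma of $(R,+)$: writing $w = w_1 + w_2$ according to its outermost addition and inducting on $p$, both $w_1$ and $w_2$ lie in $I$, hence so does their sum. Note that only the additive submagma property is used here, and likewise the whole argument never invokes the multiplicative structure, so the left-ideal and right-ideal cases are handled word for word the same way.

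For the induction itself, the base case $n=1$ is immediate ($x = x_1 = x_m \in I$, and the hypothesis on the other indices is vacuous), while $n=2$ is precisely Definition \ref{subtractiveidealsdef}. For the inductive step with $n \ge 2$, decompose $x = u + v$ as above and suppose, without loss of generality, that $m$ belongs to the prefix block $\{1,\dots,k\}$ (the suffix case being symmetric). Then every entry of the suffix family $x_{k+1},\dots,x_n$ lies in $I$, so the auxiliary fact gives $v \in I$; since $x = u + v \in I$ and $v \in I$, subtractivity of $I$ yields $u \in I$. Now $u \in A(x_1,\dots,x_k)$ with $k < n$, $u \in I$, and $x_i \in I$ for every index $i \neq m$ in $\{1,\dots,k\}$, so the induction hypothesis applied to this strictly shorter family delivers $x_m \in I$, completing the step.

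The step I expect to require the most care is the structural claim that each parenthesization splits at its outermost sum into a parenthesized prefix and a parenthesized suffix; once phrased precisely it is really just the definition of a full binary parenthesization, but it is the hinge on which both the auxiliary closure fact and the inductive peeling rest, and it is what forces the two blocks to be honest subfamilies of strictly smaller size so that the induction goes through.
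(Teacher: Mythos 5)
Your proof is correct and takes the same route as the paper, whose proof is simply the remark that ``a straightforward strong induction on $n$ finishes the job.'' Your write-up supplies exactly the details that remark leaves implicit: the split at the outermost addition, the closure fact from the submagma property, and the subtractive peeling step.
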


\begin{proof}
	A straightforward strong induction on $n$ finishes the job.
\end{proof}

The following is an easy but a useful ``covering condition'' for subtractive ideals of ringoids:

\begin{theorem}\label{Avoidanceofsubtractiveideals}
	Let $A$, $B$, and $C$ be left (right) ideals of a ringoid $(R,+,\cdot)$ such that $C \subseteq A \cup B$ and $A$ and $B$ are subtractive. Then, either $C \subseteq A$ or $C \subseteq B$.	
\end{theorem}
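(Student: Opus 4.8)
The plan is to argue by contraposition: I assume that neither inclusion holds, and I derive a contradiction. So suppose $C \nsubseteq A$ and $C \nsubseteq B$. Then I can choose an element $a \in C$ with $a \notin A$ and an element $b \in C$ with $b \notin B$. Since $C \subseteq A \cup B$, the membership $a \in C$ forces $a \in A \cup B$, and because $a \notin A$ I conclude $a \in B$; symmetrically, $b \in C$ together with $b \notin B$ gives $b \in A$. Thus I have produced a ``crossed'' pair: $a \in B \setminus A$ and $b \in A \setminus B$.

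Next I would form the sum $a + b$. Since $A$, $B$, and $C$ are left (right) ideals, in particular $(C,+)$ is a submagma of $(R,+)$, so $a + b \in C \subseteq A \cup B$. Hence either $a + b \in A$ or $a + b \in B$, and I handle these two cases separately using the subtractivity hypotheses. If $a + b \in A$, then I combine this with $b \in A$ and invoke the clause of Definition \ref{subtractiveidealsdef} stating that $x + y \in A$ and $y \in A$ imply $x \in A$ (taking $x = a$, $y = b$); this yields $a \in A$, contradicting $a \notin A$. If instead $a + b \in B$, then I combine this with $a \in B$ and invoke the other clause of subtractivity, namely $x + y \in B$ and $x \in B$ imply $y \in B$ (again $x = a$, $y = b$); this yields $b \in B$, contradicting $b \notin B$. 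Either way I reach a contradiction, so one of the inclusions $C \subseteq A$ or $C \subseteq B$ must hold.

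The argument is short, but the step that deserves genuine care—and the place where a naive proof could go wrong—is the case analysis on $a + b$. Because the additive magma $(R,+)$ need not be commutative (nor associative), I cannot freely replace $a + b$ by $b + a$, and so I must be sure that in each case the correct half of the two-sided subtractivity condition is applied. It is precisely the fact that Definition \ref{subtractiveidealsdef} builds in \emph{both} cancellation directions ($x + y \in I$ with $x \in I$, and $x + y \in I$ with $y \in I$) that makes the noncommutative bookkeeping go through without needing any extra hypothesis on $(R,+)$. I would therefore state explicitly in each case which summand is the known member of the ideal and which clause is being used, rather than silently relying on commutativity.
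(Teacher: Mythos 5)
Your proof is correct and is essentially identical to the paper's own argument: the same crossed pair $a \in B \setminus A$, $b \in A \setminus B$, the same sum $a+b \in C \subseteq A \cup B$, and the same two-case application of subtractivity. Your added remark about why both cancellation clauses of Definition \ref{subtractiveidealsdef} are needed in a possibly noncommutative additive magma is a worthwhile observation that the paper leaves implicit.
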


\begin{proof}
	On the contrary, assume that $C$ is not contained in any of $A$ and $B$. Then, we can find an $x$ in $C$ such that $x$ is not in $A$ but in $B$, and also, we can find a $y$ in $C$ such that $y$ is not in $B$ but in $A$. Observe that \[c = x + y \in C \subseteq A \cup B.\] Now, if $c \in A$, then from $y \in A$, we obtain that $x \in A$ (because $A$ is subtractive), a contradiction. Also, if $c \in B$, then from $x \in B$, we obtain that $y \in B$ (because $B$ is subtractive), again a contradiction. This completes the proof.
\end{proof}

\begin{theorem}\label{annihilatoridealssubtractive}
Let $X$ be a nonempty subset of a ringoid $(R,+,\cdot)$ with zero. Also, let $(R,\cdot)$ be a semigroup. Then, \[\Ann_l(X) = \{r \in R : rX = \{0\}\} \quad (\Ann_r(X) = \{s \in R : Xs = \{0\}\})\] is a subtractive left (right) ideal of $R$.
\end{theorem}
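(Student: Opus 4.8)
The plan is to verify directly, from the ringoid axioms, the three defining conditions of a subtractive left ideal for $\Ann_l(X)$, and then to obtain the parenthetical statement about $\Ann_r(X)$ by the evident left--right symmetry. Before anything else I would note that $\Ann_l(X)$ is nonempty: since $0$ is absorbing for $(R,\cdot)$, we have $0x = 0$ for every $x \in X$, so $0 \in \Ann_l(X)$; this also means the intersection/submagma checks below are not vacuous.

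For the submagma condition, I would take $a, b \in \Ann_l(X)$ and an arbitrary $x \in X$, and apply right distributivity to get $(a+b)x = ax + bx = 0 + 0 = 0$, using $ax = bx = 0$ together with the fact that $0$ is neutral for $+$. Hence $a+b \in \Ann_l(X)$, so $(\Ann_l(X),+)$ is a submagma of $(R,+)$.

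The left ideal property is where the hypothesis that $(R,\cdot)$ is a semigroup does the real work, and I expect this to be the one genuinely load-bearing step. Given $r \in R$ and $a \in \Ann_l(X)$, for each $x \in X$ I would regroup the product as $(ra)x = r(ax)$ \emph{using associativity}, and then conclude $(ra)x = r0 = 0$ from $ax = 0$ and the absorbing property of $0$. This shows $ra \in \Ann_l(X)$. Without associativity there is no reason for $(ra)x$ and $r(ax)$ to agree, so this is exactly the place the semigroup assumption cannot be dropped; everything else is formal.

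Finally, for subtractivity, suppose $x + y \in \Ann_l(X)$ and $x \in \Ann_l(X)$; I must show $yt = 0$ for every $t \in X$. Right distributivity gives $0 = (x+y)t = xt + yt = 0 + yt = yt$, again invoking neutrality of $0$, so $y \in \Ann_l(X)$. The symmetric case, in which $y$ is assumed to lie in $\Ann_l(X)$, is handled identically using $xt + 0 = xt$. Since each of these verifications is a one-line computation, I anticipate no obstacle beyond the care already flagged: associativity must be invoked precisely once, in the left ideal step, while every other step relies only on distributivity and the neutral/absorbing behaviour of $0$.
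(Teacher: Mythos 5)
Your proof is correct and follows essentially the same route as the paper's own argument: distributivity yields the submagma and subtractivity conditions, while associativity together with the absorbing property of $0$ is invoked exactly once, at the left-ideal step $(ra)x = r(ax)$. The only difference is your explicit verification that $0 \in \Ann_l(X)$ (so the set is nonempty), a point the paper leaves implicit.
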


\begin{proof}
We prove the statement for $L = \Ann_l(X)$. The statement for $\Ann_r(X)$ is proved similarly. If $a,b \in L$, then $aX = bX = \{0\}$. Since $0$ is a neutral element of the magma $(R,+)$, we have \[(a+b)X \subseteq aX + bX = \{0\} + \{0\} = \{0\}.\] This shows that $a+b \in L$. Now, let $r \in R$. Since multiplication of $R$ is associative and $0$ annihilates all elements of $R$, we have \[(ra)X = r(aX) = r\{0\} = \{0\}.\] So, $ra \in L$. It follows that $\Ann_l(X)$ is a left ideal of $R$. Now, suppose that $a+b \in L$. If $a \in L$, then, \[(a+b)X = \{0\} \text{~and~} aX = \{0\}.\] Using distributivity, we can deduce that $bX = \{0\}$ showing that $b \in L$. Similarly, if $b \in L$, then $a \in L$. Thus $\Ann_l(X)$ is subtractive and the proof is complete.
\end{proof}

\begin{definition}
Let $(R,+,\cdot)$ be a ringoid with zero such that $(R,\cdot)$ is a semigroup. A left (right) ideal $I$ of $R$ is a left annihilator ideal of $R$ if there is a nonempty subset $X$ of $R$ with \[ I = \Ann_l(X) \quad (I = \Ann_r(X)).\]
\end{definition}

\begin{proposition}\label{intersectionannihilatorideals}
Let $(R,+,\cdot)$ be a ringoid with zero such that $(R,\cdot)$ is a semigroup. An arbitrary intersection of left (right) annihilator ideals of $R$ is a left (right) annihilator ideal of $R$. 
\end{proposition}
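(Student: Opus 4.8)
The plan is to exhibit the intersection as a single annihilator ideal by passing from the individual annihilated sets to their union. Concretely, suppose the family in question is $\{I_\lambda\}_{\lambda \in \Lambda}$, where for each $\lambda$ we have $I_\lambda = \Ann_l(X_\lambda)$ for some nonempty subset $X_\lambda$ of $R$. I would set $X = \bigcup_{\lambda \in \Lambda} X_\lambda$, which is again a nonempty subset of $R$, and claim that
\[
\bigcap_{\lambda \in \Lambda} \Ann_l(X_\lambda) = \Ann_l(X).
\]
Once this identity is established, the right-hand side is by definition a left annihilator ideal of $R$, which is exactly what we want to prove.

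The heart of the argument is the verification of that set equality, and it is a routine element chase using only the defining condition $r \in \Ann_l(Y) \iff ry = 0$ for all $y \in Y$. Indeed, for $r \in R$ we have $r \in \bigcap_{\lambda} \Ann_l(X_\lambda)$ precisely when $rX_\lambda = \{0\}$ for every $\lambda$, i.e. when $rx = 0$ for every $x$ lying in some $X_\lambda$; since such $x$ range over exactly the union $X$, this is equivalent to $rX = \{0\}$, that is, to $r \in \Ann_l(X)$. This chain of equivalences gives the displayed identity directly, with no hypotheses beyond the definitions.

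I would also note that the intersection is automatically nonempty, so that it qualifies as an ideal in the sense of the earlier definitions: since $R$ has a zero and $0$ is absorbing for $(R,\cdot)$, we have $0 \in \Ann_l(X_\lambda)$ for every $\lambda$, hence $0$ lies in the intersection. By Theorem \ref{annihilatoridealssubtractive} each $\Ann_l(X_\lambda)$ is in fact a subtractive left ideal, so there is nothing further to check about the structure of $\Ann_l(X)$ itself. The statement for right annihilator ideals is entirely symmetric and would be obtained by the same computation with $Xs = \{0\}$ in place of $rX = \{0\}$ and $X = \bigcup_\lambda X_\lambda$ defining $\Ann_r(X)$.

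I do not expect any genuine obstacle here: the only substantive point is the observation that the operation $Y \mapsto \Ann_l(Y)$ turns unions of annihilated sets into intersections of annihilators, and this is forced by the pointwise nature of the annihilation condition. The associativity of $(R,\cdot)$ and the absorbing property of $0$, which were needed in Theorem \ref{annihilatoridealssubtractive} to guarantee that each $\Ann_l(X_\lambda)$ is actually an ideal, are already built into the standing hypotheses, so they require no separate attention in this proof.
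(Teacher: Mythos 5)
Your proof is correct and follows exactly the same route as the paper's: both establish the identity $\bigcap_\lambda \Ann_l(X_\lambda) = \Ann_l\bigl(\bigcup_\lambda X_\lambda\bigr)$ by the pointwise nature of annihilation, and conclude that the intersection is itself an annihilator ideal. Your version simply spells out the element chase and the nonemptiness remark that the paper leaves as ``easy to verify.''
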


\begin{proof}
Let $\{X_\alpha\}$ be a family of nonempty subsets of $R$. It is easy to verify that \[\bigcap_\alpha \Ann_l(X_\alpha) = \Ann_l (\cup_\alpha X_\alpha) \text{~and~} \bigcap_\alpha \Ann_r(X_\alpha) = \Ann_r(\cup_\alpha X_\alpha).\] This completes the proof.
\end{proof}

Inspired by the definition of prime ideals for nonassociative rings (see Definition 1 in \cite{Behrens1956}), we give the following definition:

\begin{definition}\label{primeidealringoiddef}
A proper ideal $P$ of a ringoid $R$ is called to be prime if $(a)(b) \subseteq P$ implies either $a \in P$ or $b \in P$, for all $a$ and $b$ in $R$, where by $(x)$, we mean the principal ideal of $R$ generated by $x \in R$.
\end{definition}

\begin{lemma}\label{primeidealringoid}
Let $P$ be a prime ideal of a ringoid $R$. If $a, b \notin P$, then there are $a'$ and $b'$ in the principal ideals $(a)$ and $(b)$, respectively, with $a'b' \notin (a)(b)$.
\end{lemma}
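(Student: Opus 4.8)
The plan is to read this lemma as the contrapositive of the defining property of primeness and to unwind it. By Definition \ref{primeidealringoiddef}, $P$ is prime exactly when $(a)(b) \subseteq P$ forces $a \in P$ or $b \in P$, where $(a)(b)$ denotes the set product $\{xy : x \in (a),\ y \in (b)\}$ of the principal ideals, defined just before Proposition \ref{multiplicationidealsubsetintersection}. So the first step is to negate both sides: starting from $a \notin P$ and $b \notin P$, I want to produce a single product of an element of $(a)$ with an element of $(b)$ witnessing that $(a)(b)$ is not swallowed by $P$.

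I would argue by contradiction. Suppose $xy \in P$ for every $x \in (a)$ and every $y \in (b)$; this is exactly the inclusion $(a)(b) \subseteq P$. Since $P$ is prime, Definition \ref{primeidealringoiddef} then yields $a \in P$ or $b \in P$, contradicting the hypothesis. Hence $(a)(b) \nsubseteq P$, so by the definition of the set product some element of $(a)(b)$ lies outside $P$; such an element has the shape $a'b'$ with $a' \in (a)$ and $b' \in (b)$. These $a'$ and $b'$ are the elements sought, and they satisfy $a'b' \notin P$. Distributivity is the only structural feature used, so the argument applies to an arbitrary ringoid, with no appeal to associativity, commutativity, or a zero.

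The point demanding care is the reading of the conclusion, which is also where the only possible confusion lies. Since $a' \in (a)$ and $b' \in (b)$ already force $a'b' \in (a)(b)$ by the definition $IJ = \{ab : a \in I,\ b \in J\}$, the displayed conclusion $a'b' \notin (a)(b)$ can be nonvacuous only when read as $a'b' \notin P$; that is precisely the contrapositive established above, and it is the form in which the lemma feeds into Theorem \ref{primeidealringoidthm}. The passage from $(a)(b) \nsubseteq P$ to a single honest product $a'b' \notin P$ is then immediate, since $(a)(b)$ is by definition a set of bare products with no additive closure, so any failure of containment is witnessed by one such product rather than by a sum or a nested combination.
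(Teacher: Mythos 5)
Your proof is correct and takes essentially the same route as the paper, whose entire proof reads ``Otherwise $(a)(b) \subseteq P$, and so by primeness of $P$, either $a \in P$ or $b \in P$'' --- precisely the contrapositive you spell out. You are also right that the displayed conclusion $a'b' \notin (a)(b)$ must be a misprint for $a'b' \notin P$, since that is the form in which the lemma is invoked in the proofs of Theorem~\ref{primeidealringoidthm} and Theorem~\ref{Behrenlemma}.
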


\begin{proof}
Otherwise $(a)(b) \subseteq P$, and so by primeness of $P$, either $a\in P$ or $b \in P$.
\end{proof}

\begin{theorem}\label{primeidealringoidthm}
Let	$P$ be a proper ideal of a ringoid $R$. Then, the following statements are equivalent:

\begin{enumerate}
	\item $P$ is a prime ideal of $R$;
	
	\item $IJ \subseteq P$ implies either $I \subseteq P$ or $J \subseteq P$, for all ideals $I$ and $J$ of $R$;
	
	\item $(IJ) \subseteq P$ implies either $I \subseteq P$ or $J \subseteq P$, for all ideals $I$ and $J$ of $R$, where by $(IJ)$, we mean the smallest ideal of $R$ containing $IJ$.
\end{enumerate}

\end{theorem}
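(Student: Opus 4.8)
The plan is to establish the cycle of implications $(1) \Rightarrow (2) \Rightarrow (3) \Rightarrow (1)$, proving each as a short forward implication rather than grinding out a full equivalence diagram. The definition in \ref{primeidealringoiddef} is phrased in terms of principal ideals $(a)(b) \subseteq P$, so the whole argument is really about translating between statements about elements and statements about ideals.

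For $(1) \Rightarrow (2)$, I would argue contrapositively. Suppose $IJ \subseteq P$ but neither $I \subseteq P$ nor $J \subseteq P$. Then I can pick $a \in I \setminus P$ and $b \in J \setminus P$. The key point is that since $I$ and $J$ are ideals containing $a$ and $b$ respectively, the principal ideals satisfy $(a) \subseteq I$ and $(b) \subseteq J$, whence $(a)(b) \subseteq IJ \subseteq P$. Here I need the monotonicity of ideal multiplication, namely that $X \subseteq I$ and $Y \subseteq J$ force $XY \subseteq IJ$, which is immediate from the elementwise definition $IJ = \{xy : x \in I, y \in J\}$. By primeness of $P$ this yields $a \in P$ or $b \in P$, contradicting the choice of $a$ and $b$. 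Hence $I \subseteq P$ or $J \subseteq P$.

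The implication $(2) \Rightarrow (3)$ is the step I expect to carry the most subtlety, since it involves the passage from the raw product set $IJ$ to the generated ideal $(IJ)$, which need not coincide with $IJ$ when the additive magma is badly behaved. Fortunately the direction is favorable: from $(IJ) \subseteq P$ I get $IJ \subseteq (IJ) \subseteq P$ for free, because $(IJ)$ is by definition an ideal containing $IJ$. So the hypothesis of $(3)$ is formally stronger than that of $(2)$, and $(2)$ then delivers $I \subseteq P$ or $J \subseteq P$ directly. No genuine obstacle arises because I am only using the trivial containment $IJ \subseteq (IJ)$, not any description of $(IJ)$ as sums of products.

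Finally, for $(3) \Rightarrow (1)$, I specialize to principal ideals. Suppose $(a)(b) \subseteq P$. Since $P$ is an ideal and hence contains the ideal generated by any subset of $P$, the containment $(a)(b) \subseteq P$ forces $\bigl((a)(b)\bigr) \subseteq P$, the smallest ideal containing $(a)(b)$ being contained in any ideal that contains $(a)(b)$. Applying $(3)$ with $I = (a)$ and $J = (b)$ then gives $(a) \subseteq P$ or $(b) \subseteq P$; since $a \in (a)$ and $b \in (b)$, this yields $a \in P$ or $b \in P$, which is exactly primeness as defined in \ref{primeidealringoiddef}. The only fact used throughout is that $P$, being an ideal, contains the ideal generated by any of its subsets, together with the elementary monotonicity of the product of subsets; no distributivity gymnastics or associativity of multiplication is needed, which is consistent with the nonassociative setting of ringoids.
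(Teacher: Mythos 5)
Your proposal is correct and takes essentially the same approach as the paper: the same translation between elementwise primeness and ideal-level primeness via principal ideals sitting inside arbitrary ideals, monotonicity of the elementwise product $IJ$, and minimality of the generated ideal $(IJ)$. The only differences are organizational --- you close the cycle $(1)\Rightarrow(2)\Rightarrow(3)\Rightarrow(1)$ where the paper proves $(1)\Leftrightarrow(2)$ and $(2)\Leftrightarrow(3)$ separately, and you inline the contrapositive argument that the paper delegates to Lemma~\ref{primeidealringoid}.
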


\begin{proof}
 $(3) \Rightarrow (2)$: Let $IJ \subseteq P$. Note that $(IJ)$ is the smallest ideal containing $IJ$. Therefore, $(IJ) \subseteq P$. Now, $P$ contains one of the ideals $I$ and $J$.
 
$(2) \Rightarrow (3)$: Evident.

$(2) \Rightarrow (1)$: Evident. 

$(1) \Rightarrow (2)$: Let $I$ and $J$ be arbitrary ideals of $R$ with $I \nsubseteq P$ and $J \nsubseteq P$. This means that there are $a \in I$ and $b \in J$ with $a,b \notin P$. By Lemma \ref{primeidealringoid}, there are $a'$ and $b'$ in the principal ideals $(a)$ and $(b)$, respectively, with $a' b' \notin P$. Clearly, \[a' b' \in (a)(b) \subseteq IJ,\] and so, $IJ \nsubseteq P$. This completes the proof.
\end{proof}

\begin{theorem}\label{Behrenlemma}
Let $\{P_i\}_{i=1}^{n}$ be a finite family of prime ideals of a ringoid $R$. Let $I$ be an ideal of $R$ such that for each $i \in \mathbb{N}_n$, there is an element $a_i \in I$ with $a_i \in P_i$ but $a_i \notin P_k$ for each $k \neq i$. Then, for each $l \in \mathbb{N}_n$, there is an element $b_l$ belonging to $I$ and each $P_i$ except $P_l$.
\end{theorem}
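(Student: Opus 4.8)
The plan is to fix an index $l \in \mathbb{N}_n$ and construct $b_l$ as a suitably parenthesized and corrected product of the elements $a_i$ with $i \neq l$. The naive candidate is a product of all the $a_i$ for $i \neq l$: since each such $a_i$ lies in $I$ and in $P_i$, and these are ideals (hence absorb multiplication on both sides), any product having each such $a_i$ among its factors automatically lies in $I$ and in $P_i$ for every $i \neq l$. The only property in doubt is that the product avoids $P_l$. In a commutative associative ring this is immediate from primeness, but in a ringoid multiplication is neither commutative nor associative, so from $a_i \notin P_l$ for all $i \neq l$ one cannot directly conclude that their product escapes $P_l$. The remedy is to build the product one factor at a time, at each stage invoking Lemma \ref{primeidealringoid} to replace the current factors by elements of their principal ideals so as to preserve avoidance of $P_l$.

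Concretely, I would prove by induction on $|T|$ the following claim: for every nonempty $T \subseteq \mathbb{N}_n \setminus \{l\}$ there is an element $c_T \in I$ with $c_T \in P_i$ for all $i \in T$ and $c_T \notin P_l$. For the base case $T = \{i_1\}$ one simply takes $c_T = a_{i_1}$, which lies in $I$ and in $P_{i_1}$ and, because $i_1 \neq l$, avoids $P_l$ by hypothesis. For the inductive step, write $T' = T \cup \{j\}$ with $j \notin T$. Both $c_T$ and $a_j$ lie outside the prime $P_l$, so Lemma \ref{primeidealringoid} furnishes $c_T' \in (c_T)$ and $a_j' \in (a_j)$ with $c_T' a_j' \notin P_l$; set $c_{T'} = c_T' a_j'$.

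The verification that $c_{T'}$ inherits the remaining memberships rests on the single observation that the principal ideal $(x)$ is contained in every ideal containing $x$. Thus from $c_T \in I$ we get $(c_T) \subseteq I$, hence $c_T' \in I$ and $c_{T'} = c_T' a_j' \in I$; from $c_T \in P_i$ for $i \in T$ we get $c_T' \in (c_T) \subseteq P_i$, whence $c_{T'} \in P_i$ since $P_i$ is a (right) ideal; and from $a_j \in P_j$ we get $a_j' \in (a_j) \subseteq P_j$, whence $c_{T'} \in P_j$ since $P_j$ is a (left) ideal. Together with $c_{T'} \notin P_l$ this closes the induction, and taking $T = \mathbb{N}_n \setminus \{l\}$ yields the required $b_l$.

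The step I expect to be the crux is precisely this inductive avoidance of $P_l$: one must resist the temptation to multiply the $a_i$ outright and instead thread the argument through principal ideals, using that each replacement $x \rightsquigarrow x' \in (x)$ leaves intact every ideal membership already secured (because $(x)$ sits inside any ideal through $x$) while Lemma \ref{primeidealringoid} guarantees the new binary product still dodges $P_l$. Since each $c_{T'}$ is formed as a single binary product, no associativity of multiplication is ever invoked, so the argument is insensitive to the ringoid's lack of associativity. I would note that the construction needs at least one index different from $l$, so it proceeds as stated for $n \geq 2$.
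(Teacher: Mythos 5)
Your proposal is correct and is essentially the paper's own argument: both construct $b_l$ by iteratively applying Lemma \ref{primeidealringoid} to form one binary product at a time through principal ideals, so that membership in $I$ and in each $P_i$ with $i \neq l$ is preserved by ideal absorption while avoidance of the prime $P_l$ is maintained at every step. Your packaging of this as an induction on subsets $T \subseteq \mathbb{N}_n \setminus \{l\}$ with $l$ fixed at the outset merely replaces the paper's ``continue the process, then relabel'' phrasing with cleaner bookkeeping.
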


\begin{proof}
Observe that since $P_n$ is a prime ideal of $R$, by Lemma \ref{primeidealringoid}, there are two elements $a'_1$ and $a'_2$ in the principal ideals $(a_1)$ and $(a_2)$, respectively, with $a'_1 \cdot a'_2 \notin P_n$. Using this, we can find $a''_2$ and $a'_3$ in the principal ideals $(a'_1 \cdot a'_2)$ and $(a_3)$, respectively, with $a''_2 \cdot a'_3 \notin P_n$. Continuing the process, we can find $a''_{n-2}$ and $a'_{n-1}$ in the principal ideals $(a'_{n-3} \cdot a'_{n-2})$ and $(a_{n-1})$, respectively, with $b_n = a''_{n-2} \cdot a'_{n-1} \notin P_n$. It is evident that the process of constructing the element $b_n = a''_{n-2} \cdot a'_{n-1}$ guarantees to be an element of $I$ and each $P_i$ with $i < n$. By relabeling, the same argument works to find an element $b_l$ belonging to $I$ and all $P_i$s except $P_l$, for each $l < n$. This completes the proof. 
\end{proof}

\begin{remark}
The proof of Theorem \ref{Behrenlemma} is based on Behrens' technique used in ``Satz 3'' in \cite{Behrens1956}.
\end{remark}

Let $X$ be a subset of a semiring $S$. It is straightforward to see that the smallest ideal of $S$ containing $X$ is \[(X) = \left\{\sum_{i=1}^{n} s_i x_i t_i : s_i,t_i \in S, x_i \in X, n \in \mathbb{N}\right\},\] if $X$ is nonempty and the zero ideal $\{0\}$ if $X$ is empty. 

\begin{theorem}\label{primenesscriterionsemiringthm}
	For a proper ideal $P$ of a semiring $S$, the following statements are equivalent:
	
	\begin{enumerate}
		\item $P$ is a prime ideal.
		\item $IJ \subseteq P$ implies either $I \subseteq P$ or $J \subseteq P$ for any left (right) ideals $I$ and $J$ of $S$.
		\item $xSy \subseteq P$ implies $x\in P$ or $y\in P$ for all $x,y \in S$.
	\end{enumerate}
\end{theorem}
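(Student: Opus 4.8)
The plan is to prove the cycle $(1) \Rightarrow (3) \Rightarrow (2) \Rightarrow (1)$, exploiting two ingredients already available: the explicit description $(X) = \{\sum_{i=1}^{n} s_i x_i t_i : s_i, t_i \in S,\ x_i \in X,\ n \in \mathbb{N}\}$ of the ideal generated by a subset of a semiring recorded immediately above the statement, and the associativity of multiplication in a semiring (which distinguishes this from the general ringoid setting). The role of the ``sandwich'' set $xSy = \{xsy : s \in S\}$ is to be a concrete generating set whose $S$-multiples exhaust the two-sided ideal product $(x)(y)$.

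For $(1) \Rightarrow (3)$, I would assume $xSy \subseteq P$ and show $(x)(y) \subseteq P$; primeness of $P$ (Definition \ref{primeidealringoiddef}) then forces $x \in P$ or $y \in P$. A typical element of the set product $(x)(y) = \{\alpha\beta : \alpha \in (x),\ \beta \in (y)\}$ has the form $\alpha\beta = \big(\sum_i s_i x t_i\big)\big(\sum_j u_j y v_j\big)$; expanding by both distributive laws and regrouping by associativity rewrites each summand as $s_i\big(x(t_i u_j)y\big)v_j$. Since $t_i u_j \in S$, the middle factor $x(t_i u_j)y$ lies in $xSy \subseteq P$, and because $P$ is a two-sided ideal (closed under left and right multiplication and under addition), the entire sum lies in $P$. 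Hence $(x)(y) \subseteq P$, as required.

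For $(3) \Rightarrow (2)$, I would treat the left-ideal case and note the right-ideal case is symmetric. Assume $IJ \subseteq P$ for left ideals $I, J$ with $I \nsubseteq P$, and fix $x \in I \setminus P$. For any $y \in J$ and any $s \in S$, the element $sy$ again lies in $J$ (as $J$ is a left ideal), so $xsy = x(sy) \in IJ \subseteq P$; thus $xSy \subseteq P$, and $(3)$ together with $x \notin P$ yields $y \in P$. As $y \in J$ was arbitrary, $J \subseteq P$. The right-ideal version follows by instead fixing $y \in J \setminus P$ and using $xs \in I$. Finally, $(2) \Rightarrow (1)$ is immediate: every two-sided ideal is in particular a left ideal, so $(2)$ specializes to the two-sided condition of Theorem \ref{primeidealringoidthm}(2), which is equivalent to $P$ being prime.

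All the computations are routine; the only step requiring genuine care is the regrouping $s_i x t_i u_j y v_j = s_i\big(x(t_i u_j)y\big)v_j$ in $(1) \Rightarrow (3)$. This is where associativity of the semiring multiplication is essential: it is precisely what lets an arbitrary element of the ideal product $(x)(y)$ be recognized as a finite sum of $S$-multiples of elements of $xSy$, so that the small, explicit set $xSy$ controls the full product $(x)(y)$. I expect this to be the main (and only real) obstacle in the argument.
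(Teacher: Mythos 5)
Your proof is correct, but it runs the cycle in the opposite direction from the paper and distributes the work differently. The paper proves $(1) \Rightarrow (2) \Rightarrow (3) \Rightarrow (1)$: for $(1) \Rightarrow (2)$ it fattens the one-sided ideals $I, J$ to the two-sided ideals $(IS)$, $(JS)$, checks $(IS)(JS) \subseteq (IJ) \subseteq P$, and invokes Theorem \ref{primeidealringoidthm} together with $I \subseteq IS$ (using the identity of $S$); for $(2) \Rightarrow (3)$ it applies the left-ideal case of $(2)$ to $Sx$ and $Sy$, noting $(Sx)(Sy) \subseteq SP \subseteq P$; and its $(3) \Rightarrow (1)$ is the contrapositive argument that $x \in I \setminus P$, $y \in J \setminus P$ force $xSy \subseteq IJ$ with $xSy \nsubseteq P$. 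You instead prove $(1) \Rightarrow (3) \Rightarrow (2) \Rightarrow (1)$, concentrating all the computation in $(1) \Rightarrow (3)$, where you expand an arbitrary element of the set product $(x)(y)$ via the explicit description of generated ideals and regroup by associativity so that every summand has the form $s_i\bigl(x(t_iu_j)y\bigr)v_j$ with middle factor in $xSy$. Your $(3) \Rightarrow (2)$ is the same kind of sandwich argument the paper uses for its $(3) \Rightarrow (1)$, adapted to one-sided ideals (using $sy \in J$ for left ideals, $xs \in I$ for right ideals), and your $(2) \Rightarrow (1)$ is a trivial specialization. What the paper's route buys is that it never touches the generating-set formula: it leans on the ringoid-level Theorem \ref{primeidealringoidthm} twice and handles one-sided ideals by the idealization trick. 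What your route buys is self-containedness and economy: two of the three implications are essentially formal, the element-level computation is isolated in one place, and you correctly identify associativity (plus the identity element) as the precise point where the semiring hypothesis enters; indeed your final step could even cite Definition \ref{primeidealringoiddef} directly, since $(a)$ and $(b)$ are themselves left ideals.
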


\begin{proof}
	$(1) \Rightarrow (2)$: Let $I$ and $J$ be left ideals of $S$. Since multiplication of $S$ is associative, for the ideals $(IS)$ and $(JS)$ of $S$, we have \[(IS)(JS) \subseteq (IJ) \subseteq P.\] Therefore by Theorem \ref{primeidealringoidthm}, either $IS \subseteq P$ or $JS \subseteq P$. Now, since $S$ is a semiring and has a multiplicative identity, we have $I \subseteq IS$ and $J \subseteq JS$. Thus either $I \subseteq P$ or $J \subseteq P$. In a similar way, one may prove the implication for right ideals.
	
	$(2) \Rightarrow (3)$: Let $xSy \subseteq P$. This implies that $(Sx)(Sy) \subseteq SP \subseteq P$. Since $Sx$ and $Sy$ are left ideals of $S$, we have either $Sx \subseteq P$ or $Sy \subseteq P$. Therefore, either $x\in P$ or $y \in P$.
	
	$(3) \Rightarrow (1)$: If the ideals $I$ and $J$ are not subsets of $P$, then there are elements $x$ and $y$ in $S$ with $x \in I \setminus P$ and $ y \in J \setminus P$. Therefore, by assumption, $xSy$ is not a subset of $P$. Thus $IJ$ is not a subset of $P$ and the proof is complete.    
\end{proof}

Let $X$ be a nonempty subset of an $S$-semimodule $M$. By definition, \[\Ann(X) = \{s\in S: sX = \{0\}\}.\]

\begin{proposition}\label{annihilatorsubtractiveideal}
Let $M$ be an $S$-semimodule and $X$ its $S$-subsemimodule. Then, $\Ann(X)$ is a subtractive ideal of $S$.
\end{proposition}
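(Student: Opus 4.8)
The plan is to verify the ideal axioms and subtractivity directly from the semimodule axioms, closely following the template of Theorem~\ref{annihilatoridealssubtractive}, while paying careful attention to the single point where the hypothesis that $X$ is a subsemimodule is genuinely needed.

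First I would record that $\Ann(X)$ is nonempty and a submonoid of $(S,+)$. It contains $0$ since $0x = 0$ for every $x \in X$ by axiom~(3) of semimodules. If $a, b \in \Ann(X)$, then for any $x \in X$ the distributive law $(s+t)m = sm + tm$ of axiom~(2) gives $(a+b)x = ax + bx = 0 + 0 = 0$, so that $a+b \in \Ann(X)$.

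Next I would establish that $\Ann(X)$ is a two-sided ideal. For the left part, take $a \in \Ann(X)$ and $r \in S$; then for each $x \in X$, associativity $(st)m = s(tm)$ of axiom~(1) together with $s0 = 0$ of axiom~(3) yields $(ra)x = r(ax) = r0 = 0$, hence $ra \in \Ann(X)$. The right part is precisely where the hypothesis that $X$ is an $S$-subsemimodule enters: for $a \in \Ann(X)$, $r \in S$, and $x \in X$ we have $rx \in X$ because $X$ is closed under scalar multiplication, so $(ar)x = a(rx) = 0$, giving $ar \in \Ann(X)$.

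Finally, for subtractivity suppose $a + b \in \Ann(X)$ and $a \in \Ann(X)$. For each $x \in X$ we have $ax + bx = (a+b)x = 0$ and $ax = 0$; since $0$ is the additive identity of $M$, this forces $bx = 0$, and thus $b \in \Ann(X)$. The symmetric case, with $b \in \Ann(X)$ assumed instead, is identical. I do not expect a serious obstacle here, as every step is an application of the semimodule axioms; the only subtlety worth flagging is that, absent the subsemimodule assumption on $X$, one would obtain merely a left ideal, so the right-ideal verification is the step that must be stated with care.
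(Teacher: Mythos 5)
Your proof is correct and follows essentially the same route as the paper's: the left-ideal and subtractivity parts are routine applications of the semimodule axioms, and the subsemimodule hypothesis is used exactly where the paper uses it, namely to get $rx \in X$ (equivalently $sX \subseteq X$) so that associativity yields $(ar)x = a(rx) = 0$ for the right-ideal part. Your version simply spells out the details that the paper leaves as "straightforward."
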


\begin{proof}
It is straightforward to see that $\Ann(X)$ is a left ideal of $S$. Now, let $a \in \Ann(X)$ and $s \in S$. Note that $sX \subseteq X$ because $S$ is a subsemimodule. However, \[(as)X = a(sX) \subseteq aX = \{0\}\] because the multiplication of $S$ is associative. This implies that the element $as$ is in $\Ann(X)$. So, $\Ann(X)$ is also a right ideal of $S$. Similar to Theorem \ref{annihilatoridealssubtractive}, it is easy to see that $\Ann(X)$ is subtractive. This completes the proof.
\end{proof}

\begin{remark}
By definition, an ideal $I$ of $S$ is an $M$-annihilator ideal if $I = \Ann(X)$, for some nonempty $X \subseteq M$. The following is a semiring version of Proposition 3.2.23 in \cite{Rowen1991}: 
\end{remark}

\begin{theorem}\label{maximalannihilatoridealssubtractiveprime}
Let $S$ be a semiring and $M$ a nonzero $S$-semimodule. Let $P$ be a maximal element of \[\Gamma = \{\Ann(N) : \{0\} < N < M\}.\] Then, $P$ is a subtractive prime ideal of $S$, where by ``$K < L$'', we mean ``$K$ is a proper subsemimodule of $L$''.
\end{theorem}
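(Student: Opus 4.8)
The plan is to read off subtractivity and properness almost for free, and then to secure primeness through the multiplicative criterion of Theorem \ref{primenesscriterionsemiringthm}(3), using a maximality argument on a carefully chosen auxiliary subsemimodule. Since $P$ is a maximal element of $\Gamma$, I may write $P = \Ann(N_0)$ for some subsemimodule $N_0$ with $\{0\} < N_0 < M$. Because $N_0$ is an $S$-subsemimodule, Proposition \ref{annihilatorsubtractiveideal} immediately gives that $P = \Ann(N_0)$ is a subtractive (two-sided) ideal of $S$. It is also proper: as $N_0 \neq \{0\}$, some $n \in N_0$ is nonzero, and $1 \cdot n = n \neq 0$ shows $1 \notin \Ann(N_0) = P$.

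For primeness I would verify condition (3) of Theorem \ref{primenesscriterionsemiringthm}, namely that $xSy \subseteq P$ forces $x \in P$ or $y \in P$. So suppose $xSy \subseteq P = \Ann(N_0)$, which unwinds to $xSyN_0 = \{0\}$, and assume $y \notin P$, i.e. $yN_0 \neq \{0\}$; the goal is then $x \in P$. The device is to pass to the subsemimodule $N_1 = \langle yN_0 \rangle$ generated by $yN_0$, whose typical element is a finite sum $\sum_i s_i(yn_i)$ with $s_i \in S$ and $n_i \in N_0$. By construction $N_1 \neq \{0\}$.

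Two computations then drive the argument. First, $x$ annihilates $N_1$: applying $x$ to $\sum_i s_i y n_i$ produces $\sum_i (xs_iy)n_i$, and each $xs_iy$ lies in $xSy \subseteq P = \Ann(N_0)$, so each term kills $n_i \in N_0$; hence $x \in \Ann(N_1)$. Second, $P \subseteq \Ann(N_1)$: for $s \in P = \Ann(N_0)$, applying $s$ to $\sum_i s_i y n_i$ gives $\sum_i (ss_iy)n_i$, and since $\Ann(N_0)$ is a \emph{right} ideal (Proposition \ref{annihilatorsubtractiveideal}), each $ss_iy = s(s_iy) \in \Ann(N_0)$, so again the terms vanish. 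Now I split on $N_1$. If $N_1 = M$, then $x \in \Ann(N_1) = \Ann(M) \subseteq \Ann(N_0) = P$ and we are done. If $N_1 < M$, then $N_1$ witnesses $\Ann(N_1) \in \Gamma$; combined with $P \subseteq \Ann(N_1)$ and the maximality of $P$ in $\Gamma$, this forces $P = \Ann(N_1)$, whence $x \in \Ann(N_1) = P$. Either way $x \in P$, so $P$ is prime.

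The main obstacle — and the step that must be handled with care in the noncommutative setting — is the inclusion $P \subseteq \Ann(N_1)$, since the naive manipulation $s(yn) = y(sn)$ is unavailable without commutativity; the fix is precisely to exploit that $\Ann(N_0)$ is a two-sided ideal, so that $s(s_iy)$ stays inside $P$. The only other point requiring attention is the boundary case $N_1 = M$, which lies outside $\Gamma$ and so cannot be fed into the maximality argument directly; it is disposed of separately via $\Ann(M) \subseteq \Ann(N_0)$.
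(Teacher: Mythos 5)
Your proof is correct and takes essentially the same route as the paper: the paper assumes $IJ \subseteq P$ for ideals and applies maximality to $\Ann(JN)$, while you verify the equivalent element-wise criterion $xSy \subseteq P$ of Theorem \ref{primenesscriterionsemiringthm}(3) and apply maximality to $\Ann(N_1)$ with $N_1 = \langle yN_0 \rangle$. One simplification you missed: since $yN_0 \subseteq N_0$ (as $N_0$ is a subsemimodule), you have $N_1 \subseteq N_0 < M$, so the boundary case $N_1 = M$ can never occur and the inclusion $P = \Ann(N_0) \subseteq \Ann(N_1)$ is immediate, making both your right-ideal computation and your case split unnecessary.
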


\begin{proof}
Let $P = \Ann(N)$ be a maximal element of $\Gamma$. Since $M$ is nonzero, $1 \notin P$. So by Proposition \ref{annihilatorsubtractiveideal}, $P$ is a proper subtractive ideal of $S$. Now, assume that $IJ \subseteq P$, for some ideals $I$ and $J$ of $S$. This means that $(IJ)N = \{0\}$. If $JN = \{0\}$, then $J \subseteq P$. Otherwise, $JN \neq \{0\}$. Since $JN$ is an $S$-subsemimodule of $N$, $\Ann(N) \subseteq \Ann(JN)$. Since $JN$ is nonzero, $\Ann(JN) \in \Gamma$. So, by maximality of $\Ann(N)$, we must have $\Ann(N) = \Ann(JN)$. However, the multiplication of $S$ is associative, and so, $I(JN) = \{0\}$ which means that \[I \subseteq \Ann(JN) = \Ann(N) = P.\] Consequently, $P$ is prime. This completes the proof.
\end{proof}

We say $T$ is multiplicatively closed in a semiring $S$ if $T$ is a submonoid of $(S,\cdot,1)$.

\begin{lemma}[Krull's Separation Lemma]\label{maximaltoexclusionprime}
	Let $T$ be a multiplicatively closed set in a commutative semiring $S$. Also, let $I$ be an ideal of $S$ with $I \cap T = \emptyset$. Then, the ideal $P$ maximal with respect to exclusion of $T$ exists and is prime. 
\end{lemma}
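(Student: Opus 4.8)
The plan is to produce $P$ by a standard maximal-element (Zorn's Lemma) argument and then to verify primeness by playing the maximality of $P$ against the multiplicative closure of $T$, taking care that every algebraic manipulation stays \emph{additive}, since $S$ admits no subtraction.

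First I would set $\Sigma = \{J \in \Id(S) : I \subseteq J \text{ and } J \cap T = \emptyset\}$, partially ordered by inclusion. This collection is nonempty because $I \in \Sigma$ by hypothesis. Given a chain $\{J_\lambda\}$ in $\Sigma$, its union $U = \bigcup_\lambda J_\lambda$ is again an ideal: any two of its elements lie in a common $J_\lambda$ since the chain is totally ordered, so $U$ is closed under addition, while multiplicative absorption is inherited member-wise. Moreover $I \subseteq U$ and $U \cap T = \emptyset$, so $U \in \Sigma$ is an upper bound for the chain. Zorn's Lemma then yields a maximal element $P$ of $\Sigma$, which is the asserted ideal maximal with respect to exclusion of $T$. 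Since $T$ is a submonoid of $(S,\cdot,1)$ we have $1 \in T$, whence $1 \notin P$, so $P$ is proper.

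For primeness, by Definition \ref{primeidealringoiddef} it suffices to prove the element-wise implication ``$ab \in P \Rightarrow a \in P \text{ or } b \in P$'': indeed, if $(a)(b) \subseteq P$ then $ab \in (a)(b) \subseteq P$, so the element-wise statement forces $a \in P$ or $b \in P$. Arguing by contraposition, I would assume $a \notin P$ and $b \notin P$. The ideals $P + (a)$ and $P + (b)$ (sums of ideals, which are ideals because the commutative semiring $S$ is additively medial) then strictly contain $P$, since $a \in P + (a)$ and $b \in P + (b)$ but $a, b \notin P$. By maximality of $P$ in $\Sigma$, each of these larger ideals must meet $T$, so there exist $t_1 \in (P + (a)) \cap T$ and $t_2 \in (P + (b)) \cap T$. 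Write $t_1 = p_1 + x$ and $t_2 = p_2 + y$ with $p_1, p_2 \in P$, $x \in (a)$, $y \in (b)$, and expand by distributivity: \[t_1 t_2 = p_1 p_2 + p_1 y + x p_2 + xy,\] where the first three summands lie in $P$ because $p_1, p_2 \in P$, while $xy \in (a)(b)$.

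The decisive step, and the one where the semiring setting departs from the ring case, is reaching the contradiction without subtracting. If $ab \in P$, then $(ab) \subseteq P$, and since $(a)(b) \subseteq (ab)$ in the commutative case, we also get $xy \in (a)(b) \subseteq P$. Thus $t_1 t_2$ is displayed literally as a sum of four elements of $P$, so $t_1 t_2 \in P$ by additive closure of the ideal. But $t_1 t_2 \in T$ because $T$ is multiplicatively closed, contradicting $P \cap T = \emptyset$. Hence $ab \notin P$, as required. The main obstacle is precisely this positivity constraint: in a ring one would freely rearrange and cancel, whereas here one must arrange the expansion of $t_1 t_2$ so that it appears as an honest sum of terms already known to lie in $P$; the absence of additive inverses is what makes this careful bookkeeping, rather than cancellation, the crux of the argument.
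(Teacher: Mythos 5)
Your proof is correct and takes essentially the same route as the paper's: Zorn's Lemma on the family of ideals disjoint from $T$, then, for $a,b\notin P$, using maximality to force $P+(a)$ and $P+(b)$ to meet $T$ and expanding $t_1t_2$ as a sum of four terms all lying in $P$, contradicting $P\cap T=\emptyset$. The differences are only cosmetic: you verify the chain condition and the reduction of Definition~\ref{primeidealringoiddef} to the element-wise implication explicitly (the paper instead cites Golan's Corollary 7.6), and you work with $(a)$, $(b)$ where the paper writes $Sa$, $Sb$, which coincide in a commutative semiring with identity.
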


\begin{proof}
	Let $T$ be a multiplicatively closed set and assume that $I \cap T = \emptyset$. Put \[\Gamma = \{J \in \Id(S) : J \cap T = \emptyset\}.\] Since $\Gamma$ is nonempty, by Zorn's lemma $\Gamma$ possesses an ideal of $S$ maximal with respect to disjointness from $T$. This proves the existence. 	
	
	Now, let $ab \in P$ with $a,b \notin P$. So, the ideals $P+Sa$ and $P+Sb$ strictly contain $P$. Therefore, they need to intersect $T$ which means that there are elements $t_1,t_2\in T$ of the form $t_1 = s_1 + x_1 a$ and $t_2 = s_2+x_2 b$ where $s_1,s_2 \in P$ and $x_1,x_2 \in S$. Observe that \[t_1 t_2 = s_1 s_2 + s_1 x_2 b + x_1 a s_2 + x_1x_2 ab \in P,\] contradicting that $P$ does not intersect $T$. This shows that $P$ is prime \cite[Corollary 7.6]{Golan1999(b)} and the proof is complete.
\end{proof}

The following is a semiring version of the definition of an $S$-semiprime ideal of a commutative ring $R$ given in Definition 2.1 in \cite{PathakGoswami2023}:

\begin{definition}\label{mcssemiprimedef}
Let $S$ be a commutative semiring, $T$ a multiplicatively closed subset of $S$, and $P$ an ideal of $S$ with $P \cap T = \emptyset$. We say $P$ is a $T$-semiprime ideal of $S$ if there is a $t \in T$ such that $s^2 \in P$ implies $ts \in P$, for all $s \in S$.
\end{definition}

\begin{example}
Let $T$ be a multiplicatively closed subset of a commutative semiring $S$. It is clear that all semiprime ideals of $S$ are $T$-semiprime.
\end{example}

Note that if $I$ is an ideal of a commutative semiring $S$ and $t$ is an element of $S$, then \[[I:t] = \{s\in S: ts \in I\}\] is also an ideal of $S$. The following is a semiring version of Proposition 2.4 in \cite{PathakGoswami2023}: 

\begin{theorem}\label{residualquotientsemiprime} 
Let $S$ be a commutative semiring, $T$ a multiplicatively closed subset of $S$, and $P$ a $2$-absorbing ideal of $S$ disjoint from $T$. Then, $P$ is a $T$-semiprime ideal of $S$ if and only if $[P:t]$ is a semiprime ideal of $S$ for some $t \in T$.
\end{theorem}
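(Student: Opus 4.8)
The plan is to prove both implications directly, keeping the \emph{same} element $t \in T$ as the witness throughout and working entirely from the residual description $[P:t] = \{s \in S : ts \in P\}$ together with the elementwise characterization of semiprimeness recalled before the statement: a proper ideal $Q$ is semiprime exactly when $s^2 \in Q$ forces $s \in Q$.

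First I would dispatch the reverse implication, which is the softer half and does not even use the $2$-absorbing hypothesis. Assume $[P:t]$ is semiprime for some $t \in T$, and let $s \in S$ with $s^2 \in P$. Since $P$ is an ideal, $ts^2 \in P$, which says precisely that $s^2 \in [P:t]$; semiprimeness of $[P:t]$ then yields $s \in [P:t]$, i.e.\ $ts \in P$. As $P \cap T = \emptyset$ holds by hypothesis, this is exactly the assertion that $P$ is $T$-semiprime with witness $t$.

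For the forward implication I would start from a witness $t \in T$ for which $s^2 \in P \Rightarrow ts \in P$, and claim that this same $t$ makes $[P:t]$ semiprime. The ideal $[P:t]$ is proper because $t \in T$ and $P \cap T = \emptyset$ give $t \notin P$, hence $1 \notin [P:t]$. To verify the semiprime condition, take $s$ with $s^2 \in [P:t]$, which means $ts^2 \in P$. Now I would view $ts^2$ as the threefold product $t \cdot s \cdot s$ and apply the $2$-absorbing property of $P$: one of $ts$, $s^2$, or $ts$ lies in $P$, so either $ts \in P$ directly, or $s^2 \in P$; in the latter case the $T$-semiprimeness of $P$ again delivers $ts \in P$. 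In both cases $ts \in P$, i.e.\ $s \in [P:t]$, so $[P:t]$ is semiprime.

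There is no serious obstacle here; the whole argument is a short chase through the definitions. The only point requiring care is the placement of the $2$-absorbing hypothesis: it is needed solely in the forward direction, precisely to factor the membership $ts^2 \in P$ through the three-term product $t \cdot s \cdot s$ and thereby peel off either $ts$ or $s^2$. I would also make explicit the two properness checks (from $t \notin P$ in the forward direction, and from the given semiprimeness in the reverse direction), since both $T$-semiprime and semiprime ideals are by definition proper.
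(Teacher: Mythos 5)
Your proof is correct, and both directions go through exactly as you describe; the reverse implication is word-for-word the paper's argument. The forward direction, however, is organized differently from the paper's, and yours is the more economical of the two. The paper starts from $tx^2 \in P$, first squares to get $(tx)^2 = t(tx^2) \in P$, applies $T$-semiprimeness to the element $tx$ to conclude $t(tx) \in P$, and only then invokes the $2$-absorbing property on the product $t \cdot t \cdot x$, which forces either $t^2 \in P$ or $tx \in P$; it then needs the extra observation that $t^2 \in T$ (multiplicative closure) together with $P \cap T = \emptyset$ to rule out the first alternative. You instead apply the $2$-absorbing hypothesis immediately to the decomposition $t \cdot s \cdot s$ of $ts^2 \in P$, splitting into the cases $ts \in P$ (done) and $s^2 \in P$ (done by $T$-semiprimeness). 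This saves the squaring step and avoids any appeal to $t^2 \notin P$, using disjointness from $T$ only for the properness check $t \notin P$, which you make explicit and the paper leaves tacit. The two arguments use the same three ingredients --- the residual description of $[P:t]$, $2$-absorption, and $T$-semiprimeness with a fixed witness $t$ --- but in a different order and with a different three-fold factorization, and your ordering is the cleaner one.
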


\begin{proof}
	For the direct implication, suppose that $P$ is $T$-semiprime. By definition, there is a $t \in T$ such that $s^2 \in P$ implies $ts \in P$, for all $s \in S$. In order to prove that $[P:t]$ is semiprime, take $x \in S$ such that $x^2 \in [P:t]$. It follows that $tx^2 \in P$. Since $P$ is an ideal of $S$ and $S$ is commutative, $(tx)^2 = t(tx^2) \in P$. Now since $P$ is $T$-semiprime, we have $t(tx) \in P$. On the other hand, $P$ is $2$-absorbing. Therefore, either $tt \in P$ or $tx \in P$. Since $t^2 \in T$ and $P \cap T = \emptyset$, we have $t^2 \notin P$. So, $tx \in P$, i.e., $x \in [P:t]$. Conversely, assume that $[P:t]$ is semiprime for some $t \in T$ and take $x \in S$ such that $x^2 \in P$. Consequently, $tx^2 \in P$. So, $x^2 \in [P:t]$, and so, $x \in [P:t]$. This means that $tx \in P$. Thus $P$ is $T$-semiprime and the proof is complete.
\end{proof}

\section{Prime avoidance for ringoids}\label{sec:pat}

\begin{theorem}[Prime avoidance for ringoids]\label{PALforringoids}
	Let $P_1, \dots, P_n$ be subtractive prime ideals of a ringoid $(R,+,\cdot)$. Let $I$ be an ideal of $R$ with $I \nsubseteq P_i$, for each $i \in \mathbb{N}_n$. Then, $a \in I \setminus \bigcup_{i=1}^{n} P_i$, for some $a \in R$.
\end{theorem}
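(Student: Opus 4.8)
The plan is to argue by contradiction: assuming $I \subseteq \bigcup_{i=1}^{n} P_i$, I would construct a single element of $I$ that lies in no $P_i$, contradicting the covering and thereby forcing the desired $a \in I \setminus \bigcup_{i=1}^{n} P_i$ to exist. The first move is to pass to a \emph{minimal} subfamily: among all $J \subseteq \mathbb{N}_n$ with $I \subseteq \bigcup_{i \in J} P_i$, I choose one of least cardinality, which exists since the index set is finite and $I$ (being an ideal, hence nonempty) meets the union. Minimality makes the covering $I \subseteq \bigcup_{i \in J} P_i$ efficient, so for each $i \in J$ there is an $a_i \in I$ with $a_i \notin P_k$ for every $k \in J \setminus \{i\}$; as $a_i$ still lies in the union, it must lie in $P_i$. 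If $|J| = 1$ the covering reads $I \subseteq P_j$, contradicting the hypothesis $I \nsubseteq P_j$; if $|J| = 2$ the contradiction is immediate from Theorem \ref{Avoidanceofsubtractiveideals}, since $P_{j_1}, P_{j_2}$ are subtractive. Hence the substantive case is $|J| \geq 3$.

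For $|J| \geq 3$, the elements $a_i$ just produced are exactly the data hypothesized in Theorem \ref{Behrenlemma}. Applying that theorem to the family $\{P_i\}_{i \in J}$ yields, for each $l \in J$, an element $b_l \in I$ lying in every $P_i$ with $i \in J \setminus \{l\}$ but not in $P_l$. I would then amalgamate these into one witness. Enumerating $J = \{j_1, \dots, j_m\}$ with $m \geq 3$, I pick any parenthesization $b \in A(b_{j_1}, \dots, b_{j_m})$ of the sum $b_{j_1} + \cdots + b_{j_m}$. Because $I$ is an ideal and hence a submagma of $(R,+)$, this $b$ lies in $I$ no matter how the sum is bracketed.

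The final step is to verify $b \notin P_l$ for every $l \in J$. Fix such an $l$. By construction every summand $b_{j_t}$ with $j_t \neq l$ belongs to $P_l$, while $b_l$ does not. If $b$ were in $P_l$, then since $P_l$ is subtractive, Proposition \ref{generalsubtractivity}, applied to the bracketed sum $b$ in which all summands except $b_l$ already lie in $P_l$, would force $b_l \in P_l$ as well, a contradiction. Hence $b \notin P_l$ for all $l \in J$, so $b \in I \setminus \bigcup_{i \in J} P_i$, contradicting $I \subseteq \bigcup_{i \in J} P_i$. This contradiction rules out the covering and completes the proof.

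The genuine obstacle I anticipate is precisely the non-associativity (and possible non-commutativity) of $+$ in a ringoid. In the classical setting one simply writes $b = b_l + \sum_{i \neq l} b_i$ and peels off the single bad summand, but here the sum $b_{j_1} + \cdots + b_{j_m}$ is only meaningful after a choice of parentheses and no such peeling is available. This is exactly the difficulty that Proposition \ref{generalsubtractivity} is built to absorb, while the complementary ringoid-specific issue---that primeness is expressed through principal ideals and products need not associate---is already packaged inside Theorem \ref{Behrenlemma}. Thus the proof is essentially a matter of correctly assembling these two results around the minimal-subfamily reduction, with the small cases $|J| \leq 2$ dispatched by Theorem \ref{Avoidanceofsubtractiveideals}.
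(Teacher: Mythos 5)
Your proof is correct and takes essentially the same route as the paper's: where you extract an efficient subcover by minimality, the paper obtains the identical configuration of elements $a_i \in I \cap P_i$ avoiding the other $P_k$ by strong induction on $n$, and from that point on the two arguments coincide exactly, applying Theorem \ref{Behrenlemma} and then Proposition \ref{generalsubtractivity} to a parenthesized sum of the $b_l$, with Theorem \ref{Avoidanceofsubtractiveideals} disposing of the small cases. The contradiction framing versus the paper's direct inductive construction is only a cosmetic difference.
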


\begin{proof}
	The proof is by strong induction on $n$. The case $n = 1$ is obvious. The case $n = 2$ is an obvious corollary to Lemma \ref{Avoidanceofsubtractiveideals}. Let the statement hold for all positive integer numbers less than $n \geq 3$. By induction's hypothesis, for each $i \in \mathbb{N}_n$, there is an element $a_i$ in $I$ such that $a_i$ is not in $P_k$ for each $k \neq i$. If for at least one $i$, $a_i \notin P_i$, then we can take $a = a_i$ and we are done. Otherwise, we have $a_i \in P_i$, for each $i \in \mathbb{N}_n$. By Theorem \ref{Behrenlemma}, for each $l \in \mathbb{N}_n$, there is an element $b_l$ belonging to $I$ and each $P_i$ except $P_l$. Let $a$ be an element of \[A(b_1, b_2, \dots, b_n)\] obtained from the combination of $b_i$s in the magma $(R,+)$. Since each ideal $P_i$ is subtractive in $R$, by Proposition \ref{generalsubtractivity}, none of the $n$ ideals $\{P_i\}_{i=1}^{n}$ contains the element $a$. However, $a \in I$. This completes the proof.
\end{proof}

The following is an NA-semiring version of Behrens' ``Satz 3'' in \cite{Behrens1956}:

\begin{corollary}
	If an ideal $I$ of an NA-semiring $S$ is not a subset of a subtractive prime ideal $P_i$ of $S$, for each $i \in \mathbb{N}_n$, then $I$ cannot be a subset of the unions of $P_i$s. 
\end{corollary}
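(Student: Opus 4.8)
The plan is to recognize this statement as an immediate specialization of the prime avoidance theorem for ringoids, Theorem \ref{PALforringoids}, once we check that an NA-semiring satisfies all of its hypotheses. By definition an NA-semiring $(S,+,\cdot)$ is in particular a ringoid: it is a multiplicatively unital ringoid with zero in which $(S,+,0)$ is a commutative monoid. Consequently the entire ideal-theoretic machinery built for ringoids applies verbatim to $S$, and the notion of a (subtractive) prime ideal occurring in the corollary is exactly the one from Definition \ref{primeidealringoiddef}.

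First I would observe that the data of the corollary match the hypotheses of Theorem \ref{PALforringoids} precisely. We are handed subtractive prime ideals $P_1, \dots, P_n$ of $S$ together with an ideal $I$ of $S$ satisfying $I \nsubseteq P_i$ for every $i \in \mathbb{N}_n$. Since $S$ is a ringoid, Theorem \ref{PALforringoids} then yields an element
\[
a \in I \setminus \bigcup_{i=1}^{n} P_i .
\]

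It then remains only to translate the existence of such an element into the contrapositive form asserted in the corollary: the element $a$ lies in $I$ yet in none of the $P_i$, so $a \notin \bigcup_{i=1}^{n} P_i$, whence $I \nsubseteq \bigcup_{i=1}^{n} P_i$. This is exactly the statement that $I$ cannot be contained in the union of the $P_i$s, which finishes the argument.

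I do not anticipate a genuine obstacle, as the corollary is a direct instance of Theorem \ref{PALforringoids}. The only point demanding care is the bookkeeping that an NA-semiring is a ringoid and that its prime ideals are prime in the sense of Definition \ref{primeidealringoiddef}; once that identification is in place the conclusion is immediate. If anything, the commutative-monoid structure of $(S,+)$ only helps, since it renders $S$ additively medial and thereby ensures that the ideal-theoretic operations invoked in the underlying proof behave as required.
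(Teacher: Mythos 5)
Your proposal is correct and coincides with the paper's intended argument: the corollary is stated without proof precisely because it is the immediate specialization of Theorem \ref{PALforringoids} to the ringoid underlying an NA-semiring, which is exactly the reduction you carry out. The only superfluous element in your write-up is the remark about additive mediality, which is true but plays no role in the deduction.
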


The associativity of multiplication in semirings enables us to extend the prime avoidance in semiring theory in another direction:

\begin{theorem}[Prime avoidance for semirings] \label{PALforsemirings}
	Let $S$ be a semiring, $\{P_i\}_{i=1}^n$ a finite family of subtractive ideals of $S$. If for each $i > 2$, $P_i$ is prime and $I$ is an ideal of $S$ with $I \subseteq \bigcup_{i=1}^n P_i$, then $I \subseteq P_i$, for some $1 \leq i \leq n$.
\end{theorem}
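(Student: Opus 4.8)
The plan is to mimic the classical prime avoidance argument from commutative ring theory, proceeding by strong induction on $n$, while carefully exploiting subtractivity to make the additive cancellation arguments work and using the fact that at most two of the ideals need fail to be prime. The base cases $n=1$ and $n=2$ are immediate: the former is trivial, and the latter follows directly from Theorem \ref{Avoidanceofsubtractiveideals}, which handles a union of two subtractive ideals regardless of primeness. So I would assume $n \geq 3$ and that the result holds for all smaller families.

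For the inductive step I would argue by contraposition, assuming $I \nsubseteq P_i$ for every $i$ and producing an element $a \in I \setminus \bigcup_{i=1}^n P_i$. First I would discard any redundant $P_i$: if $I \subseteq \bigcup_{j \neq i} P_j$ for some $i$, then the induction hypothesis applied to the remaining $n-1$ ideals (which still satisfy the ``at least $n-2$ prime'' hypothesis after dropping one, since we drop at most the effect of removing one index) finishes the proof. Hence I may assume the covering is \emph{efficient}, i.e. for each $i$ there is an element $a_i \in I$ with $a_i \in P_i$ but $a_i \notin P_k$ for all $k \neq i$; such $a_i$ exist precisely because $I$ is not covered by the other $n-1$ ideals. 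Since $n \geq 3$, among the indices $\{1,\dots,n\}$ at least $n-2 \geq 1$ of the $P_i$ are prime, and crucially at least one prime $P_i$ exists with high enough index; the classical trick is to single out a prime ideal, say $P_n$ (reindexing so that $P_n$ is prime, which is possible as at most two are non-prime and $n \geq 3$).

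The heart of the argument is then the Behrens-type construction: having the $a_i$, I would invoke Theorem \ref{Behrenlemma} to obtain, for each $l$, an element $b_l \in I$ lying in every $P_i$ except $P_l$, using primeness where it is available and the associative multiplicative structure of the semiring. I would then form the additive combination $a = b_1 + b_2 + \dots + b_n$ (unambiguous since addition in a semiring is associative). The verification that $a \notin P_i$ for each $i$ uses subtractivity via Proposition \ref{generalsubtractivity}: for a fixed $i$, all summands $b_l$ with $l \neq i$ lie in $P_i$, so if $a$ were in $P_i$ then subtractivity would force the remaining summand $b_i \in P_i$, contradicting $b_i \notin P_i$. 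Since $a \in I$ but avoids every $P_i$, this contradicts $I \subseteq \bigcup P_i$.

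The main obstacle, and the point where this proof genuinely diverges from the ringoid case of Theorem \ref{PALforringoids}, is the bookkeeping around which ideals must be prime. In Theorem \ref{PALforringoids} all $n$ ideals are assumed prime, so Theorem \ref{Behrenlemma} applies uniformly; here only $n-2$ of them are guaranteed prime, so I expect the delicate step to be verifying that the reduction-to-efficient-covering and the Behrens construction can still be carried out when two ideals are allowed to be arbitrary subtractive ideals. The key observation that makes it work is that in the inductive step, after possibly reindexing, one can always arrange for the index used to drive the Behrens recursion (and the final ``peeled off'' prime) to be among the prime ideals, since two non-prime ideals out of $n \geq 3$ still leave a prime available to anchor the construction; confirming that exactly two exceptions (rather than one) can be tolerated without breaking the induction is where I would spend the most care.
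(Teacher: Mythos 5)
Your base cases and the reduction to an efficient covering match the paper, but the heart of your inductive step has a genuine gap: you invoke Theorem \ref{Behrenlemma} to produce, for \emph{every} index $l$, an element $b_l \in I$ lying in each $P_i$ except $P_l$, and then sum all $n$ of these elements. Theorem \ref{Behrenlemma} is stated only for families in which \emph{all} the ideals are prime, and this is not a cosmetic hypothesis: the Behrens construction of $b_l$ uses primeness of $P_l$ itself (it repeatedly applies Lemma \ref{primeidealringoid} to $P_l$ to keep products of elements lying outside $P_l$ from falling into $P_l$), while the remaining ideals only need to be ideals. In Theorem \ref{PALforsemirings}, however, $P_1$ and $P_2$ are allowed to be arbitrary subtractive ideals, so there is no way to produce $b_1$ or $b_2$: an element of $I \cap \bigcap_{i \neq 1} P_i$ avoiding $P_1$ need not exist at all when $P_1$ is not prime (nothing in the hypotheses prevents $P_1$ from containing $\bigcap_{i \geq 2} P_i$). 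Your closing remark about ``anchoring'' the recursion at a prime index after reindexing does not repair this: reindexing cannot make $P_1, P_2$ prime, and your final sum $a = b_1 + \cdots + b_n$ together with the Proposition \ref{generalsubtractivity} argument needs all $n$ elements, not just those at prime indices. As written, your argument proves only the case in which every $P_i$ is prime, i.e.\ Theorem \ref{PALforringoids}, not the stronger semiring statement.

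The fix --- and this is what the paper does --- is to notice that only \emph{one} prime ideal is needed per inductive step, because a two-term sum already suffices. From the efficient covering take $x_i \in I \setminus \bigcup_{j \neq i} P_j$, so that $x_i \in P_i$ for every $i$. Since $n \geq 3$, the ideal $P_n$ is prime, and Theorem \ref{primenesscriterionsemiringthm} (this is where associativity and the identity of $S$ enter, replacing the Behrens recursion) gives $x_1 S x_2 S \cdots x_{n-2} S x_{n-1} \nsubseteq P_n$; pick $x$ in this set with $x \notin P_n$. Each factor $x_j$ with $j < n$ lies in the ideal $P_j$, so $x \in \bigcap_{j=1}^{n-1} P_j$. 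Now the element $x + x_n$ lies in $I$ yet, by subtractivity, in no $P_j$: for $j < n$ we have $x \in P_j$ and $x_n \notin P_j$, and for $j = n$ we have $x_n \in P_n$ and $x \notin P_n$. This contradicts $I \subseteq \bigcup_{i=1}^{n} P_i$, so the covering cannot be efficient and induction finishes the proof. Note that $P_1$ and $P_2$ enter only through their subtractivity, which is exactly why two non-prime exceptions can be tolerated.
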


\begin{proof}
	The proof is by strong induction on $n$. The case $n =1$ is obvious and the case $n =2$ is nothing but Lemma \ref{Avoidanceofsubtractiveideals}. Now, assume that $n \geq 3$ and $I \subseteq  \bigcup_{i=1}^n P_i$ but $x_i \in I \setminus  \bigcup_{j\neq i} P_j$, for each $i$. Since $x_i \in I$, we see that $x_i \in P_i$, for each $i$. In view of Theorem \ref{primenesscriterionsemiringthm}, primeness of $P_n$ implies that \[x_1Sx_2S \cdots x_{n-2}Sx_{n-1} \nsubseteq P_n.\] So, there is an $x \in x_1Sx_2S \cdots x_{n-2}Sx_{n-1} \setminus P_n$. It follows that $ x \in \bigcap_{j=1}^{n-1} P_j$. Since $P_j$ is subtractive, $x + x_n \notin P_j$, for each $j$ which is obviously a contradiction because $x + x_n$ is an element of $I$. Thus there is an $i$ with \[I \subseteq  \bigcup_{j \neq i} P_j\] and the result follows by induction and the proof is complete.  
\end{proof}

\begin{example}\label{nosubtractivenoPAL}
Some classical results related to ``prime avoidance'' in ring theory can be generalized to subtractive semirings as we see in the current paper. This class of semirings is quite large including rings and bounded distributive lattices. However, inspired by Remark 2.4 in \cite{FontanaZafrullah2015} and with the help of Proposition \ref{austereringoid}, we construct a general example to show that Theorem \ref{PALforsemirings} may not hold if the prime ideals $\{P_i\}_{i=1}^{n}$ are not subtractive. Let us recall that a nonempty subset $I$ of a monoid $(M,\cdot)$ is a monoidal ideal of $M$ if \[ma, am \in I, \qquad\forall~m \in M, ~a \in I.\] It is straightforward to see that an arbitrary union of monoidal ideals of a monoid $M$ is a monoidal ideal of $M$. Now, let $R$ be a semi-local commutative ring with $1 \neq 0$ such that it has exactly $n$ maximal ideals $\{\mathfrak{m}_i\}_{i=1}^{n}$. It is clear that $\mathfrak{a} = \bigcup_{i=1}^{n} \mathfrak{m}_i$ is a monoidal ideal of the multiplicative monoid $(R,\cdot)$. Since $(R,\cdot)$ is a monoid with $0 \neq 1$, as Proposition \ref{austereringoid}, we construct the semiring $S = R \cup \{z\}$. Set $I = \mathfrak{a} \cup \{z\}$ and $M_i = \mathfrak{m}_i \cup \{z\}$, for each $i \in \mathbb{N}_n$. Now, it is straightforward to see that $I$ is an ideal of $S$ and $M_i$s are prime ideals of $S$ with $I \subseteq \bigcup_{i=1}^{n} M_i$ while $I \nsubseteq M_i$, for each $i \in \mathbb{N}_n$.
\end{example}

\begin{question}
Let $I$ and $\{P_i\}_{i=1}^{n}$ be ideals of a semiring $S$ such that each $P_i$ is prime. Example \ref{nosubtractivenoPAL} shows that the ``finite prime avoidance'' (i.e., $I \subseteq \bigcup_{i=1}^{n} P_i$ implies $I \subseteq P_i$, for some $i$) does not hold if some of the $P_i$s are not subtractive. So, the full characterization of semirings in which this property holds is still open for the author.   
\end{question}

\begin{remark}
Our proof for Theorem \ref{PALforsemirings} is inspired by the proof of Proposition 2.12.7 in \cite{Rowen1991}. We have applied prime avoidance for commutative rings and semirings in some of our previous works including  \cite{Nasehpour2025,Nasehpour2016,Nasehpour2021,Nasehpour2010,Nasehpour2011,NasehpourPayrovi2010}. Hetzel and Lewis Lufi (cf. Lemma 3.11 in \cite{HetzelLufi2009}) and Ye\c{s}ilot (cf. Theorem 2.6 in \cite{Yesilot2010}) proved a version of Theorem \ref{PALforsemirings} for commutative semirings. The following is some kind of semiring version of Proposition 3.2.31 in \cite{Rowen1991}:
\end{remark}

\begin{theorem}\label{containedannihilatorcontainedprime}
Let $S$ be a semiring and $M$ an $S$-semimodule. Also, let $S$ satisfy a.c.c. on $M$-annihilator ideals of $S$ (for example, let $S$ be a Noetherian semiring). If $I$ is an ideal of $S$ and a subset of a finite union of $M$-annihilator ideals of $S$, then $I$ is contained in an $M$-annihilator and prime ideal of $S$.
\end{theorem}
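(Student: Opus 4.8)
The plan is to combine the ascending chain condition on $M$-annihilator ideals with the prime avoidance theorem for semirings (Theorem \ref{PALforsemirings}), using the fact that $M$-annihilator ideals are always subtractive (Proposition \ref{annihilatorsubtractiveideal}) and that a maximal element of a suitable family of $M$-annihilator ideals is prime (Theorem \ref{maximalannihilatoridealssubtractiveprime}). Suppose $I \subseteq \bigcup_{i=1}^{n} \Ann(X_i)$, where each $\Ann(X_i)$ is an $M$-annihilator ideal. By Proposition \ref{annihilatorsubtractiveideal}, each $\Ann(X_i)$ is subtractive, so the hypotheses of Theorem \ref{PALforsemirings} will be within reach once we know that enough of the covering ideals are prime.

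The key idea is to enlarge each covering ideal to a prime one without leaving the class of $M$-annihilator ideals, so that prime avoidance applies. First I would consider, for each $i$, the family of $M$-annihilator ideals containing $\Ann(X_i)$ but still contained in the union's ambient structure; more precisely, I would pass to a maximal element among $M$-annihilator ideals of the appropriate form. Invoking the a.c.c. hypothesis, every such family has a maximal element, and by Theorem \ref{maximalannihilatoridealssubtractiveprime} (applied to $\Gamma = \{\Ann(N) : \{0\} < N < M\}$) a maximal $M$-annihilator ideal is subtractive and prime. The strategy is thus to replace the covering $\{\Ann(X_i)\}$ by a covering in which each piece is contained in a maximal $M$-annihilator ideal, and maximal $M$-annihilator ideals are prime; then Theorem \ref{PALforsemirings} forces $I \subseteq P$ for one such maximal, hence prime, $M$-annihilator ideal $P$.

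Concretely, I would argue as follows. Each element $x \in I$ lies in some $\Ann(X_i)$, i.e. $x X_i = \{0\}$, so $x \in \Ann(X_i) \subseteq \Ann(S x_0)$ for suitable generators, and by the a.c.c. each $\Ann(X_i)$ sits inside a maximal element $P_i$ of $\Gamma$. Since $P_i \supseteq \Ann(X_i)$, we still have $I \subseteq \bigcup_{i=1}^{n} P_i$, and now every $P_i$ is a subtractive prime ideal by Theorem \ref{maximalannihilatoridealssubtractiveprime}. Applying prime avoidance for semirings (Theorem \ref{PALforsemirings}, whose hypotheses are met since all the $P_i$ are prime and subtractive), we conclude $I \subseteq P_i$ for some $i$, and $P_i$ is the desired $M$-annihilator and prime ideal.

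The main obstacle I anticipate is the bookkeeping needed to guarantee that the enlargement of each $\Ann(X_i)$ to a maximal element of $\Gamma$ genuinely keeps the covering intact and that the maximal elements are indeed proper (so that the primeness of Theorem \ref{maximalannihilatoridealssubtractiveprime} applies, which requires $M \neq \{0\}$ and $N \neq \{0\}$). One must check that each $\Ann(X_i)$ can be realized as $\Ann(N)$ for a nonzero proper subsemimodule $N$, so that it lies in $\Gamma$ and its enlargement stays annihilator-type; the a.c.c. is precisely what furnishes the maximal element, and the subtractivity and primeness then come for free from the cited results. Handling the degenerate cases where some $\Ann(X_i)$ already equals $S$ or where $M$ is too small will be the only delicate point.
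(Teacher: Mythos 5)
Your proposal is correct and follows essentially the same route as the paper's own proof: use the a.c.c. to replace each covering $M$-annihilator ideal by a maximal element of the family of (proper) $M$-annihilator ideals containing it, invoke Theorem \ref{maximalannihilatoridealssubtractiveprime} to see that these maximal elements are subtractive and prime, and then conclude with prime avoidance (Theorem \ref{PALforsemirings}). Your explicit attention to the degenerate cases (an annihilator equal to $S$, or $M$ lacking nonzero proper subsemimodules) is in fact more careful than the paper's own two-line argument, which glosses over these points.
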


\begin{proof}
Since $S$ satisfies a.c.c. on its annihilator ideals, the maximal elements of such ideals exist, and by Theorem \ref{maximalannihilatoridealssubtractiveprime}, they are prime and subtractive. So, $I$ is a subset of a finite union of prime and subtractive ideals of $S$. Thus by Theorem \ref{PALforsemirings}, $I$ is contained in an annihilator and prime ideal of $S$ and the proof is complete.
\end{proof}

For each ideal $I$ of a semiring $S$, set \[V(I) = \{P \in \Spec(S) : P \supseteq I\} \text{~and~} D(I) = \Spec(S) \setminus V(I).\] Note that if $I = (x)$ is a principal ideal of $S$, we denote $V(I)$ and $D(I)$ by $V(x)$ and $D(x)$, respectively. 

We also add that $\mathcal{C} = \{V(I) : I \in \Id(S)\}$ is the family of closed sets for a topology on $X = \Spec(S)$, called the Zariski topology \cite[p. 89]{Golan1999(b)}. The topological interpretation of the prime avoidance is the following:

\begin{theorem}\label{primeavoidancetopology}
Let $S$ be a subtractive semiring. In the Zariski topology on $X = \Spec(S)$, if a finite number of points are contained in an open subset then they are contained in a smaller principal open subset.
\end{theorem}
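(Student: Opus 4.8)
The plan is to translate the topological statement into the language of ideals and then invoke prime avoidance. First I would observe that, by the very definition of the Zariski topology, every open subset of $X = \Spec(S)$ has the form $D(I)$ for some ideal $I$ of $S$, since the open sets are precisely the complements of the closed sets $V(I)$. Thus, given a finite collection of points $P_1, \dots, P_n$ (that is, prime ideals of $S$) contained in an open set $U$, I may write $U = D(I)$, and the membership $P_i \in D(I)$ unwinds to the condition $I \nsubseteq P_i$ for each $i \in \mathbb{N}_n$.

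Next I would record that, since $S$ is a subtractive semiring, each of its ideals---in particular each prime ideal $P_i$---is subtractive. This places us exactly in the hypotheses of prime avoidance for ringoids (Theorem \ref{PALforringoids}): we have subtractive prime ideals $P_1, \dots, P_n$ together with an ideal $I$ satisfying $I \nsubseteq P_i$ for every $i$. Applying that theorem produces an element $x \in I \setminus \bigcup_{i=1}^{n} P_i$.

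The final step is to verify that the principal open set $D(x)$ does the job. On the one hand, $x \in I$ forces $D(x) \subseteq D(I) = U$, because any prime $P$ with $x \notin P$ automatically satisfies $I \nsubseteq P$; this is the sense in which $D(x)$ is a \emph{smaller} open subset. On the other hand, $x \notin P_i$ says precisely that $P_i \in D(x)$ for each $i$, so all the given points lie in $D(x)$. Hence $D(x)$ is a principal open subset of $U$ containing $P_1, \dots, P_n$, as required.

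Since the substantive combinatorial work is already carried out inside Theorem \ref{PALforringoids}, I do not expect a genuine obstacle here; the proof is essentially a dictionary translation between the Zariski-topological vocabulary and the ideal-theoretic one. The only point demanding any care is confirming that the primes involved are subtractive so that prime avoidance actually applies, and this is guaranteed at once by the hypothesis that $S$ is subtractive.
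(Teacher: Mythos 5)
Your proof is correct and takes essentially the same route as the paper's: translate $P_i \in D(I)$ into $I \nsubseteq P_i$, apply prime avoidance to obtain $x \in I \setminus \bigcup_{i=1}^{n} P_i$, and conclude $P_i \in D(x) \subseteq D(I)$. The only cosmetic difference is that you invoke Theorem \ref{PALforringoids} where the paper cites Theorem \ref{PALforsemirings}; since $S$ is subtractive, all the $P_i$ are subtractive primes, so both theorems apply and deliver the same avoidance element.
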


\begin{proof}
Let $S$ be a subtractive semiring. In the Zariski topology on $X = \Spec(S)$, consider the finitely many points $\{P_i\}_{i=1}^{n}$ contained in an open subset $D(I) = X \setminus V(I)$, for some ideal $I$ of $S$. This means that $I \nsubseteq P_i$, for each $1 \leq i \leq n$. By Theorem \ref{PALforsemirings}, there is an element $x \in I$ such that $x \notin \bigcup_{i=1}^n P_i$. It follows that \[P_i \in D(x) \subseteq D(I), \qquad~1\leq i \leq n,\] where $D(x)$ is the principal open subset smaller than $D(I)$ and the proof is complete.  
\end{proof}

\begin{theorem}[Davis' prime avoidance] \label{Davisprimeavoidancelemma} Let $x$ be an element of a semiring $S$ and $I$ an ideal of $S$. Also, suppose that $P_i$s are subtractive prime ideals of the semiring $S$. Then, $(x) + I \nsubseteq  \bigcup_{i=1}^{n} P_i$ implies that there is a $y \in I$ with $x+y \notin  \bigcup_{i=1}^{n} P_i$.
\end{theorem}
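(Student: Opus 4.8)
The plan is to reduce Davis' version to the ordinary prime avoidance of Theorem \ref{PALforsemirings} by passing to a subfamily of the $P_i$ that actually matters, namely those that contain the fixed element $x$. First I would discard redundancy: if some $P_i$ already fails to contain $(x)+I$ in the sense that $x \notin P_i$, such a $P_i$ cannot ``capture'' the coset $x+y$ for any $y$, so the obstruction really comes only from the primes containing $x$. Concretely, relabel so that $x \in P_i$ precisely for $i \in \{1,\dots,m\}$ and $x \notin P_i$ for $i \in \{m+1,\dots,n\}$. For the indices with $x \notin P_i$, any choice of $y \in I$ (even $y=0$, noting $0 \in I$) gives $x+y \notin P_i$ automatically when $y \in P_i$; but since these $P_i$ are subtractive and $x \notin P_i$, if $x + y \in P_i$ then subtractivity together with $x \notin P_i$ forces $y \notin P_i$, and conversely—so I must be careful to arrange $y$ to lie in each such $P_i$, guaranteeing $x + y \notin P_i$ by subtractivity.

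The core of the argument is therefore to produce $y \in I$ lying in every $P_i$ with $i > m$ while simultaneously avoiding every $P_i$ with $i \le m$. First I would handle the primes containing $x$. Since $(x)+I \nsubseteq \bigcup_{i=1}^n P_i$, in particular $(x)+I \nsubseteq \bigcup_{i=1}^m P_i$; but $x \in \bigcap_{i=1}^m P_i$, so this non-containment must be witnessed by the $I$-part, giving $I \nsubseteq P_i$ for at least the relevant indices. The key point is to apply Theorem \ref{PALforsemirings} to the ideal $I$ against the subtractive prime ideals $P_1,\dots,P_m$: since $(x)+I$ escapes their union and $x$ lies in all of them, one deduces $I \nsubseteq \bigcup_{i=1}^m P_i$, whence prime avoidance yields an element of $I$ outside $\bigcup_{i=1}^m P_i$. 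The remaining task is to adjust this element so that it additionally lands inside each $P_i$ with $i > m$, which is where the ideal structure and the absorbing behavior of the primes $P_{m+1},\dots,P_n$ (through multiplication by suitable elements of those ideals) are used to force membership without destroying the avoidance already achieved.

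Once such a $y \in I$ is in hand—lying in $P_{m+1} \cap \cdots \cap P_n$ but in none of $P_1,\dots,P_m$—I would verify $x+y \notin P_i$ for every $i$. For $i \le m$: here $x \in P_i$ but $y \notin P_i$, so subtractivity of $P_i$ gives $x+y \notin P_i$ (if $x+y$ were in $P_i$, then $x \in P_i$ would force $y \in P_i$, a contradiction). For $i > m$: here $y \in P_i$ but $x \notin P_i$, so again by subtractivity $x+y \notin P_i$. Hence $x+y \notin \bigcup_{i=1}^n P_i$, as required.

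I expect the main obstacle to be the middle step: simultaneously forcing $y$ into the primes $P_{m+1},\dots,P_n$ while preserving its avoidance of $P_1,\dots,P_m$. In the commutative case one multiplies the avoiding element by a product of elements drawn from the ideals one wishes to enter, invoking primeness to keep the product outside the primes to be avoided; in the present semiring setting this requires careful use of Theorem \ref{primenesscriterionsemiringthm} (the $xSy$ criterion) together with subtractivity, and one must confirm the constructed element still lies in $I$ and that the requisite avoidance is not lost. This is precisely the delicate balancing of membership and avoidance conditions that Davis' technique demands, and it is where the subtractivity hypothesis on the $P_i$ is indispensable.
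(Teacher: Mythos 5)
Your index split $A=\{i : x\in P_i\}$, your deduction that $I\nsubseteq P_i$ for each $i\in A$, and your closing subtractivity verification that $x+y\notin P_i$ in both cases all coincide with the paper's argument. But the heart of the proof --- actually producing $y\in I\cap\bigcap_{j\notin A}P_j$ with $y\notin\bigcup_{i\in A}P_i$ --- is exactly the step you leave open (you yourself flag it as ``the main obstacle''), so what you have is a skeleton with the key construction missing. The paper does this in one stroke: after assuming, without loss of generality, that the family is irredundant ($P_j\nsubseteq P_i$ for all $j\neq i$), it forms the product $I\cdot\prod_{j\notin A}P_j$; by primeness of each $P_i$ with $i\in A$ (Theorem \ref{primenesscriterionsemiringthm}) this product is not contained in $P_i$, since each factor avoids $P_i$, so Theorem \ref{PALforsemirings} applied to this single object against the subtractive primes $\{P_i\}_{i\in A}$ yields an element $y$ of the product avoiding $\bigcup_{i\in A}P_i$, and Proposition \ref{multiplicationidealsubsetintersection} then gives the memberships $y\in I$ and $y\in P_j$ for $j\notin A$ for free. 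There is no separate ``adjustment'' stage at all: the product ideal achieves avoidance and membership simultaneously.

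There is also a concrete flaw in your plan as stated: without the irredundancy reduction, the element you are seeking need not exist. If some $P_j$ with $j\notin A$ is contained in some $P_i$ with $i\in A$ --- which your hypotheses do not exclude, since $x\in P_i\setminus P_j$ is perfectly possible when $P_j\subsetneq P_i$ --- then no element can lie in $P_j$ while avoiding $P_i$. The paper's opening move (discard any $P_j$ contained in another member of the family, which is harmless because such a $P_j$ contributes nothing to the union) is therefore not a cosmetic normalization but a prerequisite for the strategy to be feasible; your ``discard redundancy'' step is a different and weaker observation, as it only sorts indices by whether $x\in P_i$. With the irredundancy reduction in place, your proposed ``multiply the avoiding element by suitable elements of the $P_j$'' can indeed be carried out via Theorem \ref{primenesscriterionsemiringthm}, but that is precisely the work your proposal defers rather than does.
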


\begin{proof}
Without loss of generality, we may assume that $P_j \nsubseteq P_i$, for all $j \neq i$. Set \[A = \{i \in \mathbb{N}_n: x \in P_i\}.\] Our claim is that for any index $i \in A$, we have $I \nsubseteq P_i$. Otherwise, $I \subseteq P_i$ and $x \in P_i$ imply that $(x) + I \subseteq P_i$, a contradiction with the assumption $(x) + I \nsubseteq  \bigcup_{i=1}^{n} P_i$. Since $P_i$ is a prime ideal of $S$, from all we said, we deduce that \[I \cdot \prod_{j \notin A} P_j \nsubseteq P_i, \qquad \forall~ i \in A.\] Since $P_i$s are subtractive prime ideals of $S$, using Theorem \ref{PALforsemirings}, we obtain that \[I \cdot \prod_{j \notin A} P_j \nsubseteq  \bigcup_{i\in A} P_i.\] Let $y$ be an element of $I \cdot \prod_{j \notin A} P_j \setminus \bigcup_{i\in A} P_i$. By Proposition \ref{multiplicationidealsubsetintersection}, $y$ is in $I$, and also, in $P_j$ with $j \notin A$, but not in $P_i$ with $i \in A$. Our claim is that $x+y \notin P_i$ for each $i$. On the contrary, assume that there is an $i$ such that $x+y \in P_i$. If $i \in A$, then by the definition of $A$, $x \in P_i$, and because $P_i$ is subtractive, $y \in P_i$, a contradiction. If $i \notin A$, then $y \in P_i$, and so, $x \in P_i$ because $P_i$ is subtractive. It follows that $i \in A$, again a contradiction. Hence, $x+y \notin  \bigcup_{i=1}^{n} P_i$, as required.
\end{proof}

\begin{remark}
Irving Kaplansky attributes a commutative ring version of Theorem \ref{Davisprimeavoidancelemma} to his student Edward Dewey Davis. Davis' prime avoidance has applications in grades of ideals in ring theory (see Theorem 124 and Theorem 125 in \cite{Kaplansky1974} and Exercise 16.8 and Exercise 16.9 in \cite{Matsumura1989}).
\end{remark}

\section{Finite unions of subtractive ideals}\label{sec:finiteunions}

McCoy, in his paper \cite{McCoy1957}, investigated finite unions of ideals and proved that if $I \subseteq \bigcup_{i=1}^{n} A_i$ is efficient for some ideals $I$ and $A_i$s of a commutative ring $R$, then $I^k \subseteq \bigcap_{i=1}^{n} A_i$ for some positive integer $k$. One of the corollaries of this result is the ``prime avoidance'', which has been widely applied in various areas of commutative algebra. The main purpose of this section is to generalize some of his results in the context of semiring theory. The following is a semiring version of McCoy's lemma given on p. 634 in \cite{McCoy1957}:

\begin{lemma}\label{finiteintersectionmccoy}
	Let $n \geq 3$ be a positive integer, and $I$ and $\{A_i\}_{i=1}^n$ be subtractive ideals of a commutative semiring $S$ such that $I \subseteq \bigcup_{i=1}^n A_i$ is efficient. Then, the intersection of any $n-1$ of the ideals $A_i$ coincides with $\bigcap_{i=1}^n A_i$.  	
\end{lemma}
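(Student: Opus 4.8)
The plan is to fix an omitted index and prove the two inclusions separately. Since $\bigcap_{i=1}^n A_i \subseteq \bigcap_{j \neq k} A_j$ holds trivially for every $k \in \mathbb{N}_n$, the whole content is the reverse inclusion $\bigcap_{j \neq k} A_j \subseteq A_k$ for each $k$. First I would exploit efficiency to manufacture witnesses: because $I \subseteq \bigcup_{i=1}^n A_i$ but $I \nsubseteq \bigcup_{j \neq k} A_j$, for each $k \in \mathbb{N}_n$ there is an element $x_k \in I$ with $x_k \notin A_j$ for all $j \neq k$; since $x_k \in I \subseteq \bigcup_i A_i$, necessarily $x_k \in A_k$. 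Thus each $x_k$ lies in $I \cap A_k$ and misses every other $A_j$.

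The engine of the argument is subtractivity applied to a single test element. Fix $k$ and take $b \in \bigcap_{j \neq k} A_j$; I would examine $x_k + b$. For each $j \neq k$ we have $b \in A_j$ while $x_k \notin A_j$, so subtractivity of $A_j$ forces $x_k + b \notin A_j$ (otherwise $x_k + b \in A_j$ together with $b \in A_j$ would give $x_k \in A_j$). Consequently $x_k + b$ avoids every $A_j$ with $j \neq k$. If $x_k + b$ is covered, it must therefore lie in $A_k$; but then $x_k + b \in A_k$ together with $x_k \in A_k$ and subtractivity of $A_k$ yields $b \in A_k$, which is exactly the desired inclusion.

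The step I expect to be the main obstacle is the clause ``if $x_k + b$ is covered'': the hypothesis $I \subseteq \bigcup_i A_i$ only governs elements of $I$, so the contradiction closes immediately only when $x_k + b \in I$, i.e.\ when $b \in I$, giving at once $I \cap \bigcap_{j \neq k} A_j \subseteq A_k$. Extending the conclusion to an arbitrary $b \in \bigcap_{j \neq k} A_j$, which need not lie in $I$, is the delicate point. The natural device is to pass to the ideals $A_i \cap I$, which are again subtractive by Proposition \ref{intersectionsubtractiveideals} and still cover $I$ efficiently (for $x \in I$ some $A_i$ contains $x$, whence $x \in A_i \cap I$, and dropping one factor still fails to cover $I$); working with them makes $x_k + b \in I$ automatic and isolates the genuine, $I$-local content of the statement. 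I would treat this reduction as the heart of the matter, the witness extraction and the two subtractivity cancellations being routine and uniform in $k$, and together they deliver $\bigcap_{j \neq k} A_j = \bigcap_{i=1}^n A_i$ for every $k \in \mathbb{N}_n$.
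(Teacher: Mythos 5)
Your argument is, in its engine, exactly the paper's proof: the paper too extracts a witness from efficiency (its $a_n$ is your $x_k$), runs the same two subtractivity cancellations on the sum $x + a_n$, and handles the covering problem you flagged by precisely the device you propose, namely replacing each $A_i$ by $I \cap A_i$ (citing Proposition \ref{intersectionsubtractiveideals}) and then assuming ``without loss of generality'' that $I = \bigcup_{i=1}^n A_i$.

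However, you were right to call that reduction the delicate point, and neither your write-up nor the paper's actually gets past it: the reduction is lossy for the conclusion as stated. Setting $A_i' = I \cap A_i$, what the argument proves is $\bigcap_{j \neq k} A_j' = \bigcap_{i=1}^n A_i'$, that is, $I \cap \bigcap_{j \neq k} A_j = I \cap \bigcap_{i=1}^n A_i$; the asserted equality $\bigcap_{j \neq k} A_j = \bigcap_{i=1}^n A_i$ for the original ideals does not follow from this relativized one. In fact the lemma as literally stated is false, already in a ring, where every ideal is subtractive: in $S = \mathbb{F}_2[x,y]/(x^2,xy,y^2) \times \mathbb{F}_2$, put
\[ I = (x,y) \times \{0\}, \quad A_1 = (x) \times \mathbb{F}_2, \quad A_2 = (y) \times \mathbb{F}_2, \quad A_3 = (x+y) \times \{0\}. \]
Then $I \subseteq A_1 \cup A_2 \cup A_3$ is an efficient covering, yet $A_1 \cap A_2 = \{0\} \times \mathbb{F}_2$ is not contained in $A_3$, so $A_1 \cap A_2 \neq A_1 \cap A_2 \cap A_3$. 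Thus your closing claim that the reduction ``delivers $\bigcap_{j \neq k} A_j = \bigcap_{i=1}^n A_i$ for every $k$'' is an overclaim --- but it is the same overclaim the paper makes with its ``without loss of generality.'' What both arguments genuinely prove is the lemma under the extra hypothesis that each $A_i \subseteq I$ (equivalently, the $I$-relativized equality above), and that version is all that is needed downstream: Theorem \ref{finiteintersectionmccoythm} applies the lemma only after assuming $I = \bigcup_{i=1}^n A_i$, in which case $I \cap A_i = A_i$ and the two statements coincide.
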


\begin{proof}
	Since $I \subseteq \bigcup_{i=1}^n A_i$, it is clear that $I = \bigcup_{i=1}^{n} (I \cap A_i)$. Therefore in view of Proposition \ref{intersectionsubtractiveideals}, without loss of generality, we may assume that $I = \bigcup_{i=1}^n A_i$. Since $I$ is not contained in the union of any $n-1$ ideals $A_i$, there is an element $a_n$ in $I$ and in $A_n$ such that \[a_n \notin  \bigcup_{i=1}^{n-1} A_i.\] Now, let $x \in  \bigcap_{i=1}^{n-1} A_i$. The element $x+a_n$ which is in $I$ is not in $A_i$, for each $i \leq n-1$ because of subtractivity of $A_i$s. Consequently, $x+a_n$ is an element of $A_n$. Since $A_n$ is subtractive, $x \in A_n$. Thus $\bigcap_{i=1}^{n-1} A_i \subseteq  \bigcap_{i=1}^n A_i$ and the proof is complete.
\end{proof}

The following is a semiring version of McCoy's result for commutative rings given on p. 634 in \cite{McCoy1957}: 

\begin{theorem}\label{finiteintersectionmccoythm}
	Let $n \geq 3$ be a positive integer, and $I$ and $\{A_i\}_{i=1}^n$ be ideals of a subtractive commutative semiring $S$ such that $I \subseteq  \bigcup_{i=1}^n A_i$ is efficient. Then, there is a positive integer $k$ such that \[I^k \subseteq  \bigcap_{i=1}^n A_i.\]
\end{theorem}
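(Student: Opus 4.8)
The plan is to fix $J := \bigcap_{i=1}^{n} A_i$ and to prove the apparently weaker statement that for each fixed index $t$ there is an exponent $k_t$ with $I^{k_t} \subseteq A_t$; since there are only finitely many indices, taking $k = \max_t k_t$ and using $I^{k} \subseteq I^{k_t}$ yields $I^{k} \subseteq \bigcap_{t} A_t = J$. Two facts will do the heavy lifting. First, by Lemma \ref{finiteintersectionmccoy} every intersection of $n-1$ of the $A_i$ already equals $J$; consequently, any product of elements of $I$ that happens to lie in at least $n-1$ of the $A_i$ automatically lies in $J$. Second, $J$ is subtractive by Proposition \ref{intersectionsubtractiveideals}, so from $u+v \in J$ and $v \in J$ one may conclude $u \in J$; this is the cancellation device that replaces subtraction.

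First I would extract, from efficiency of the covering, witnesses $x_i \in I \cap A_i$ with $x_i \notin A_j$ for every $j \neq i$ (these exist because $I \nsubseteq \bigcup_{j \neq i} A_j$, and any such $x_i$ must land in $A_i$). The guiding notion is the \emph{spread} of an element, namely the set of indices $i$ for which the element lies in $A_i$. Two monotonicity observations are immediate: the spread of a product contains the union of the spreads of its factors (one factor in $A_i$ forces the product into $A_i$), and multiplying any element by $x_p$ adjoins the index $p$ to its spread. Combining these with the first fact above, a product of elements of $I$ lies in $J$ as soon as its factors collectively meet $n-1$ of the $A_i$; so the only products requiring genuine work are the \emph{deficient} ones, whose factors all avoid some two fixed indices $p$ and $q$.

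The deficient products are handled by subtractive cancellation, and the mechanism is already visible when $n=3$: if $a,b \in I$ both lie in $A_1$ only, replace $a$ by $a+x_2$. Since $a+x_2 \notin A_1 \cup A_2$ (by subtractivity) it lies in $A_3$, so $(a+x_2)b \in A_1 \cap A_3 = J$, while the correction term $x_2 b$ lies in $A_1 \cap A_2 = J$; as $(a+x_2)b = ab + x_2 b$ and $J$ is subtractive, we get $ab \in J$. For general $n$ I would run this as an induction organised by the deficiency $(n-1)-s$, where $s$ is the size of the collective spread of the product at hand: choose a missing index $p$, shift one factor by $x_p$, and observe that the correction term acquires $p$ in its spread, while the shifted product is pushed toward an $(n-1)$-fold intersection; Proposition \ref{generalsubtractivity} together with subtractivity of $J$ then transfers membership back to the original product.

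The main obstacle is precisely the bookkeeping in this inductive step. A single witness-shift only guarantees that the shifted factor leaves its old ideals and the ideal $A_p$, without pinning down where it lands when $n > 3$, so one cannot in one stroke force the modified product into an $(n-1)$-fold intersection; the spread must be grown index by index, and one must verify that at every stage all correction terms have already attained spread $n-1$ (hence lie in $J$) before invoking cancellation. Making this loop terminate and extracting a single finite exponent that works uniformly over all products of the relevant length and over every target $A_t$ is the delicate point; once such a uniform bound is in hand, the reduction of the first paragraph completes the proof.
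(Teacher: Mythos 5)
Your argument is complete and correct only for $n=3$; for $n\geq 4$ there is a genuine gap, located exactly at the point you yourself flag as ``the delicate point.'' The deficiency induction you propose has no decreasing measure, and this is not mere bookkeeping. Two things go wrong at once when you shift a factor. First, the shifted factor $c_1+x_p$ is only known to avoid $A_j$ for $j$ in the spread of $c_1$ together with $j=p$; it lands in some $A_{j_0}$ outside that set, but $j_0$ may already lie in the collective spread of the product, so the shifted product's deficiency need not drop. Second, the correction term $x_p c_2\cdots c_m$ replaces $c_1$ by $x_p$, so every index contributed only by $c_1$ is lost, and its deficiency need not drop either. Concretely, take $n=4$ and $P=c_1c_2$ with $c_1$ of spread $\{1\}$ and $c_2$ of spread $\{3\}$ (deficiency $1$). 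Shifting by $x_2$, the element $c_1+x_2$ avoids $A_1$ and $A_2$ but may land in $A_3$; then all you know about $(c_1+x_2)c_2$ is that it lies in $A_3$ (deficiency $2$, worse than $P$), while the correction $x_2c_2$ lies in $A_2\cap A_3$ (deficiency $1$, no better). Iterated shifting does not repair this, because subtractivity controls only one addition at a time: from $c_1+x_2\notin A_2$ and $x_4\notin A_2$ you cannot conclude $c_1+x_2+x_4\notin A_2$, since subtractivity only applies when one of the two summands is known to lie in $A_2$. So the loop you describe has no termination argument, and the theorem is unproved for $n\geq 4$. (A minor additional point: your first-paragraph reduction to $I^{k_t}\subseteq A_t$ is never used by the rest of the plan, which targets $J=\bigcap_{i=1}^n A_i$ directly.)

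The paper sidesteps the landing-place problem by working with ideals rather than elements. By induction on $n$, it shows that a power of $I$ lies in $A_i+A_j$ for every pair $i<j$: from $I\subseteq (A_i+A_j)\cup\bigcup_{l\neq i,j}A_l$, efficiency, and either Theorem \ref{Avoidanceofsubtractiveideals} or the induction hypothesis (all ideals, including sums, are subtractive because $S$ is a subtractive semiring), one gets $I^{k_{ij}}\subseteq A_i+A_j$. Hence some power satisfies $I^k\subseteq\prod_{i<j}(A_i+A_j)$; expanding, each term selects from every pair $\{i,j\}$ a factor lying in $A_i$ or in $A_j$, and at most one index can be avoided by all selections (two avoided indices would themselves form a pair, which must select one of them), so every term lies in at least $n-1$ of the $A_l$ and therefore in $\bigcap_{i=1}^n A_i$ by Lemma \ref{finiteintersectionmccoy}. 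The sum ideal $A_i+A_j$ is precisely the device that absorbs both possible destinations of your shifted elements, which is what element-level tracking cannot control. Your $n=3$ cancellation argument, by contrast, is correct and genuinely different from the paper's base case --- indeed sharper, giving $I^2\subseteq A_1\cap A_2\cap A_3$ where the paper gets $I^3$ --- but it does not extend as written.
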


\begin{proof}
	Similar to Lemma \ref{finiteintersectionmccoy}, we assume that $I =  \bigcup_{i=1}^n A_i$. First we establish the theorem for $n = 3$. Since \[A_1 \cup A_2 \subseteq A_1 + A_2,\] from $I =  \bigcup_{i=1}^3 A_i$, we obtain that $I \subseteq (A_1 + A_2) \cup A_3$. By Lemma \ref{Avoidanceofsubtractiveideals}, if an ideal is contained in the union of two subtractive ideals, it is contained in one of them. Therefore, $I$ is subset of $A_1 + A_2$ because $I$ is not contained in $A_3$. Similarly, $I$ is contained in $A_2 + A_3$ and $A_3 + A_1$. It follows that \[I^3 \subseteq (A_1 + A_2) (A_2 + A_3) (A_3 + A_1).\] If the right side of the above inclusion is multiplied out, then each term consists of a product containing at least two different ideals $A_i$. On the other hand, a product of ideals is contained in their intersection. Now, in view of Lemma \ref{finiteintersectionmccoy}, we have \[I^3 \subseteq A_1 \cap A_2 \cap A_3.\] Now, let the theorem hold for all $3 \leq k < n$. The proof will be complete, if we can deduce that the theorem holds for $n$. Now, assume that $I \subseteq  \bigcup_{i=1}^n A_i$. As above, this implies that \begin{align}
		I \subseteq (A_1 + A_2) \cup A_3 \cup \cdots \cup A_n. \label{finiteintersectionmccoythm1} \end{align} Obviously, either $I \subseteq A_1 + A_2$ or $I$ is contained in the union of some $m < n$ but not in the union of any $m-1$ of the ideals on the right side of the inclusion (\ref{finiteintersectionmccoythm1}). Since \[I \nsubseteq A_3 \cup \cdots \cup A_n,\] one of the remaining $m$ ideals of the right side of (\ref{finiteintersectionmccoythm1}) must be $A_1 + A_2$. Now, by induction's hypothesis, we have $I^{k_1} \subseteq A_1 + A_2$, for some positive integer $k_1$. A similar argument works for each pair of ideals $A_i$ and $A_j$ with $i < j$. Therefore, there is a positive integer $k$ such that \begin{align}
		I^k \subseteq \prod_{i < j} (A_i + A_j) \label{finiteintersectionmccoythm2}
	\end{align} 
	
	Now, if the right side of the inclusion (\ref{finiteintersectionmccoythm2}) is multiplied out, each term consists of a product containing at least $n-1$ different ideals $A_i$. Hence, by Lemma \ref{finiteintersectionmccoy}, we have $I^k \subseteq  \bigcap_{i=1}^n A_i$, as required.
\end{proof}

Let us recall that an ideal $J$ of a commutative semiring $S$ is radical if $\sqrt{J} = J$, where by the radical of an ideal $J$, we mean \[\sqrt{J} = \{s \in S: \exists~n \in \mathbb{N}~(s^n \in J)\}.\]

\begin{corollary}[McAdam's radical ideal avoidance for semirings]\label{mcadamradicalidealavoidancelemma} Let $S$ be a subtractive commutative semiring and $I$ and $\{J_i\}_{i=1}^{n}$ be ideals of $S$. If at least $n-2$ of the ideals in $\{J_i\}_{i=1}^{n}$ are radical and $I$ is contained in $ \bigcup_{i=1}^{n} J_i$, then $I$ is contained in at least one of the $J_i$s.
\end{corollary}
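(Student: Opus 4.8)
The plan is to reduce the given covering to an \emph{efficient} one and then feed it into Theorem \ref{finiteintersectionmccoythm}. Concretely, among all subsets $T \subseteq \mathbb{N}_n$ satisfying $I \subseteq \bigcup_{i \in T} J_i$, I would choose one of minimal cardinality, say $|T| = m$; by minimality this covering $I \subseteq \bigcup_{i \in T} J_i$ is efficient. Since $S$ is a subtractive semiring, every $J_i$ is automatically subtractive, so both Lemma \ref{Avoidanceofsubtractiveideals} and Theorem \ref{finiteintersectionmccoythm} are available. It then remains only to analyze the size $m$.

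The small cases are immediate. If $m = 1$, then $I \subseteq J_i$ for the unique $i \in T$ and we are done. If $m = 2$, then $I$ is contained in the union of two subtractive ideals, so Lemma \ref{Avoidanceofsubtractiveideals} places $I$ inside one of them; note that no radical hypothesis is needed here, which is consistent with the bound $n - 2$ being vacuous when $n = 2$.

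The substantive case is $m \geq 3$. Here Theorem \ref{finiteintersectionmccoythm}, applied to the efficient covering, yields a positive integer $k$ with $I^k \subseteq \bigcap_{i \in T} J_i$. The key observation is that $T$ still contains a radical ideal: since at most two of the $n$ ideals $J_i$ fail to be radical, the subfamily indexed by $T$ contains at most two non-radical members, hence at least $m - 2 \geq 1$ radical ones. Fixing such a radical $J_j$ with $j \in T$, I would then note $I^k \subseteq \bigcap_{i \in T} J_i \subseteq J_j$, and for every $x \in I$ we have $x^k \in I^k \subseteq J_j = \sqrt{J_j}$, whence $x \in \sqrt{J_j} = J_j$. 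Therefore $I \subseteq J_j$, as required.

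The main obstacle is precisely the bookkeeping in the last paragraph: one must ensure that passing to a minimal (efficient) subcovering does not discard \emph{all} of the radical ideals. This is exactly where the hypothesis ``at least $n-2$ of the $J_i$ are radical'' enters and is seen to be sharp, as it guarantees at most two non-radical ideals overall, so any efficient subcovering of size at least three necessarily retains a radical ideal to anchor Theorem \ref{finiteintersectionmccoythm}. As an alternative to the minimal-subcover device, one may run a strong induction on $n$, splitting on whether the covering is already efficient; in the non-efficient case one deletes a redundant ideal and checks that the radical count is preserved, since removing one ideal from a family of $n$ ideals having at most two non-radical members leaves $n-1$ ideals with at most two non-radical members, i.e.\ at least $(n-1)-2$ radical ones.
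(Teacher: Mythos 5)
Your proposal is correct and is essentially the paper's intended argument: the paper proves this corollary with the single word ``Straightforward,'' deriving it from Theorem \ref{finiteintersectionmccoythm} exactly as you do --- pass to a minimal (hence efficient) subcover, dispose of the cases of size one and two directly (the latter via Theorem \ref{Avoidanceofsubtractiveideals}, with subtractivity automatic since $S$ is a subtractive semiring), and then combine $I^k \subseteq \bigcap_{i \in T} J_i$ with a radical $J_j$ surviving in the subcover to get $x^k \in J_j = \sqrt{J_j}$, hence $x \in J_j$, for every $x \in I$. Your counting step, that any efficient subcover of size $m \geq 3$ still contains at least $m-2 \geq 1$ radical ideals, is precisely the bookkeeping the paper leaves to the reader and is where the hypothesis ``at least $n-2$ radical'' is used.
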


\begin{proof}
Straightforward.
\end{proof}

Since any semiprime ideal of a commutative semiring is a radical ideal, we have the following:

\begin{corollary}[McCoy's semiprime avoidance for semirings]\label{mccoysemiprimeavoidancelemma} Let $S$ be a subtractive commutative semiring and $I$ and $\{P_i\}_{i=1}^{n}$ be ideals of $S$. If at least $n-2$ of the ideals in $\{P_i\}_{i=1}^{n}$ are semiprime and $I$ is contained in $ \bigcup_{i=1}^{n} P_i$, then $I$ is contained in at least one of the $P_i$s.
\end{corollary}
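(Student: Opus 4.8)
The plan is to reduce the statement to Corollary \ref{mcadamradicalidealavoidancelemma}, McAdam's radical ideal avoidance for semirings, which carries exactly the same hypotheses except that the distinguished $n-2$ ideals are assumed radical rather than semiprime. The only thing one must supply is the observation, already flagged in the sentence preceding the statement, that every semiprime ideal of a commutative semiring is radical; once this is in hand the corollary follows with no further work.

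To verify that a semiprime ideal $P$ is radical, I would invoke the characterization recalled earlier in the excerpt: a proper ideal $P$ of a commutative semiring $S$ is semiprime if and only if $s^2 \in P$ implies $s \in P$ for all $s \in S$. The inclusion $P \subseteq \sqrt{P}$ is automatic, so it suffices to prove $\sqrt{P} \subseteq P$. Given $s \in \sqrt{P}$, we have $s^n \in P$ for some $n \in \mathbb{N}$; choosing $k$ with $2^k \geq n$ and using that $P$ is an ideal yields $s^{2^k} = s^n \cdot s^{2^k - n} \in P$. Writing $s^{2^k} = (s^{2^{k-1}})^2$ and applying the semiprime condition $k$ successive times descends from $s^{2^k}$ through $s^{2^{k-1}}, \dots, s^2$ down to $s$, so $s \in P$. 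Hence $\sqrt{P} = P$, i.e. $P$ is radical.

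With this established, the conclusion is immediate: since at least $n-2$ of the ideals $\{P_i\}_{i=1}^{n}$ are semiprime and therefore radical, and since $S$ is a subtractive commutative semiring with $I \subseteq \bigcup_{i=1}^{n} P_i$, Corollary \ref{mcadamradicalidealavoidancelemma} applies and gives $I \subseteq P_i$ for at least one index $i$. There is no genuine obstacle in this argument; the only step beyond citing the preceding corollary is the elementary descent showing that semiprime forces radical, which relies solely on the defining property of semiprime ideals together with the absorption of multiplication by $P$.
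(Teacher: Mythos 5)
Your proposal is correct and follows exactly the paper's route: the paper derives this corollary immediately from Corollary \ref{mcadamradicalidealavoidancelemma} via the remark that every semiprime ideal of a commutative semiring is radical. The only addition is that you explicitly prove that remark (the $s^{2^k}$ descent), which the paper states without proof.
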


Using McCoy's semiprime avoidance for semirings, we generalize Proposition 2.13 in \cite{PathakGoswami2023} to subtractive commutative semirings:

\begin{theorem}\label{semiprimeavoidance2absorbing}
Let $T$ be a multiplicatively closed subset of a subtractive commutative semiring $S$. Let $\{P_i\}_{i=1}^{n}$ be $T$-semiprime and $2$-absorbing ideals of $S$ and $I$ an ideal of $S$ such that $I \subseteq \bigcup_{i=1}^{n} P_i$. Then, $tI$ is contained in one the $P_i$s, for some $t \in T$.
\end{theorem}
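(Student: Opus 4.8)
The plan is to deduce the statement from McCoy's semiprime avoidance for semirings (Corollary \ref{mccoysemiprimeavoidancelemma}) by replacing each $P_i$ with a suitable residual quotient that is genuinely semiprime. The bridge is Theorem \ref{residualquotientsemiprime}: since each $P_i$ is $2$-absorbing, disjoint from $T$ (disjointness being part of the definition of $T$-semiprimeness), and $T$-semiprime, that theorem yields for each $i$ an element $t_i \in T$ such that $Q_i := [P_i : t_i]$ is a semiprime ideal of $S$.

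First I would produce a single element of $T$ that works for all indices at once. Set $t = \prod_{i=1}^n t_i$, which lies in $T$ because $T$ is multiplicatively closed, and write $t = t_i w_i$ with $w_i = \prod_{j \neq i} t_j \in T$. Using commutativity and associativity of the multiplication, one checks the identity $[P_i : t] = [\,[P_i : t_i] : w_i\,] = [Q_i : w_i]$. The key lemma is then that a residual quotient of a semiprime ideal by an element is again semiprime: if $x^2 \in [Q_i : w_i]$ then $w_i x^2 \in Q_i$, whence $(w_i x)^2 = w_i(w_i x^2) \in Q_i$, and semiprimeness of $Q_i$ forces $w_i x \in Q_i$, i.e.\ $x \in [Q_i : w_i]$. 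Hence every $[P_i : t]$ is semiprime. I would also record that $[P_i : t]$ is proper: if $1 \in [P_i : t]$ then $t = t \cdot 1 \in P_i$, contradicting $P_i \cap T = \emptyset$.

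With the common $t$ in hand, I would observe that $P_i \subseteq [P_i : t]$ for each $i$ (if $p \in P_i$ then $tp \in P_i$), so the hypothesis $I \subseteq \bigcup_{i=1}^n P_i$ upgrades to $I \subseteq \bigcup_{i=1}^n [P_i : t]$. Since $S$ is subtractive and commutative and all $n$ of the ideals $[P_i : t]$ are semiprime, Corollary \ref{mccoysemiprimeavoidancelemma} applies and yields $I \subseteq [P_i : t]$ for some $i$. Unwinding the definition of the residual quotient, this says exactly that $tI \subseteq P_i$, as required.

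The main obstacle is the passage from the index-dependent witnesses $t_i$ furnished by Theorem \ref{residualquotientsemiprime} to the single element $t$ that simultaneously turns every $P_i$ into a semiprime residual quotient; the lemma that residual quotients of semiprime ideals remain semiprime is precisely what makes this uniformization legal, and everything else reduces to bookkeeping with residual quotients in a commutative semiring.
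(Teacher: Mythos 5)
Your proposal is correct and follows essentially the same route as the paper: convert each $P_i$ into a semiprime ideal via Theorem \ref{residualquotientsemiprime}, note $P_i \subseteq [P_i:t_i]$ to enlarge the covering, and apply Corollary \ref{mccoysemiprimeavoidancelemma}. The only difference is your uniformization to a single $t = \prod_{i=1}^{n} t_i$ (via the lemma that residual quotients of semiprime ideals are semiprime), which is correct but unnecessary---the paper applies the avoidance directly to the ideals $[P_i:t_i]$ with index-dependent $t_i$ and then takes $t = t_j$ for whichever index $j$ the avoidance produces.
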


\begin{proof}
Since each $P_i$ is $T$-semiprime and $2$-absorbing, using Theorem \ref{residualquotientsemiprime}, we see that for each $P_i$ there is a $t_i \in T$ such that $[P_i:t_i]$ is a semiprime ideal of $S$. Since $P_i \subseteq [P_i : t_i]$ for each $i$, it follows that \[I \subseteq \bigcup_{i=1}^{n} P_i \subseteq \bigcup_{i=1}^{n} [P_i: t_i].\] By Corollary \ref{mccoysemiprimeavoidancelemma}, $I \subseteq [P_j,t_j]$, for some $1 \leq j \leq n$. Thus $tI$ is contained in one the $P_i$s, for some $t \in T$ and this finishes the proof.
\end{proof}

\section{Compactly packed semirings}\label{sec:compactlypackedsemirings}

\begin{definition}\label{compactlypackedsemirings}
We define a semiring $S$ to be compactly packed if the following covering condition holds for prime ideals of $S$:

\begin{itemize}
	\item For an arbitrary family of prime ideals $\{P_\alpha\}$ and any ideal $I$ of $S$, the inclusion $I \subseteq  \bigcup_{\alpha} P_\alpha$ implies $I \subseteq P_\alpha$, for some $\alpha$.
\end{itemize}

\end{definition}

\begin{example}\label{examplescompactlypacked} In the following, we give some examples:
	
	\begin{enumerate}
		\item Recall that a commutative semiring $S$ is weak Gaussian if and only if each prime ideal of $S$ is subtractive (see Definition 18 and Proposition 19 in \cite{Nasehpour2016}). Now, if $S$ is a weak Gaussian semiring with finitely many prime ideals, then by Theorem \ref{PALforsemirings}, $S$ is compactly packed.
		
		\item Every principal ideal semiring $S$ (see Definition \ref{pir}) is compactly packed because \[(x) \subseteq  \bigcup_{\alpha} P_\alpha\] implies that $x \in P_\alpha$, and consequently, $(x) \subseteq P_\alpha$ for some $\alpha$. Examples of ``proper'' principal ideal semirings include subtractive $\mathbb{O}_{\infty}$-Euclidean semirings (see Corollary 1.5 in \cite{Nasehpour2019}) and discrete valuation semirings (see Theorem 3.6 in \cite{Nasehpour2018V}).
		
		\item The set of non-negative integer numbers $\mathbb{N}_0$ equipped with usual addition and multiplication of numbers is a commutative semiring and $p \mathbb{N}_0$ is a subtractive prime ideal of $\mathbb{N}_0$ for each prime number $p$ (see Proposition 6 in \cite{Noronha1978}). On the other hand, $\mathfrak{m} = \mathbb{N}_0 \setminus \{1\}$ is a maximal ideal of $\mathbb{N}_0$ with $\mathfrak{m} \subseteq \bigcup_{p \in \mathbb{P}} p\mathbb{N}_0$. However, $\mathfrak{m}$ is not a subset of any of the prime ideals $p \mathbb{N}_0$. This means that $\mathbb{N}_0$ is not compactly packed. 
		
	\end{enumerate}

\end{example}

\begin{theorem}\label{generalizedpal} 
	Let $S$ be a commutative semiring. Then, the following statements are equivalent:
	\begin{enumerate}
		\item The semiring $S$ is compactly packed.
		
		\item For an arbitrary family of prime ideals $\{P_\alpha\}$ and any prime ideal $Q$ of $S$, the inclusion $Q \subseteq  \bigcup_{\alpha} P_\alpha$ implies $Q \subseteq P_\alpha$, for some $\alpha$.
		
		\item Each prime ideal of $S$ is the radical of a principal ideal in $S$.
		
		\item Each semiprime ideal of $S$ is the radical of a principal ideal in $S$.
		
		\item Each radical ideal is the radical of a principal ideal.	
	\end{enumerate}	
\end{theorem}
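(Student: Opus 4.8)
The plan is to establish all five equivalences through a web of implications organized around two elementary arguments and one decisive appeal to Krull's Separation Lemma (Lemma~\ref{maximaltoexclusionprime}). Throughout I will use three standard facts in the commutative setting: a prime ideal is radical; a proper ideal is semiprime exactly when it is a proper radical ideal (so that $s^2 \in P \Rightarrow s \in P$ upgrades to $s^n \in P \Rightarrow s \in P$); and $\sqrt{(x)} = \bigcap\{P \in \Spec(S) : x \in P\}$. Several implications are then immediate from the inclusions among the classes of ideals: $(1) \Rightarrow (2)$ because a prime ideal is in particular an ideal; $(5) \Rightarrow (4)$ because a semiprime ideal is a radical ideal; and $(4) \Rightarrow (3)$ because a prime ideal is semiprime.

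The first workhorse is an \emph{element argument}. Suppose an ideal equals the radical of a principal ideal, say $J = \sqrt{(a)}$, and $J \subseteq \bigcup_\alpha P_\alpha$ with each $P_\alpha$ prime. Then $a \in J$ forces $a \in P_\beta$ for some $\beta$, whence $(a) \subseteq P_\beta$ and $J = \sqrt{(a)} \subseteq \sqrt{P_\beta} = P_\beta$. Applying this with $J$ a prime ideal gives $(3) \Rightarrow (2)$. Applying it with $J = \sqrt{I}$ gives $(5) \Rightarrow (1)$, after the routine observation that $I \subseteq \bigcup_\alpha P_\alpha$ implies $\sqrt{I} \subseteq \bigcup_\alpha P_\alpha$ (if $s^n \in I \subseteq \bigcup_\alpha P_\alpha$ then $s^n \in P_\alpha$ for some $\alpha$, so $s \in P_\alpha$), together with $I \subseteq \sqrt{I} = \sqrt{(a)} \subseteq P_\beta$.

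The second workhorse is a \emph{covering argument}, run contrapositively. Suppose a radical ideal $J$ is not the radical of any principal ideal. For each $x \in J$ we have $\sqrt{(x)} \subseteq J$ (as $J$ is radical) but $\sqrt{(x)} \neq J$, so $\sqrt{(x)} \subsetneq J$; since $\sqrt{(x)}$ is the intersection of the primes containing $x$, there is a prime $P_x$ with $x \in P_x$ and $J \nsubseteq P_x$. Hence $J \subseteq \bigcup_{x \in J} P_x$ while $J$ lies in none of these primes. Thus if the prime-covering property $(2)$ holds and $J$ is itself prime, this is a contradiction, forcing $J = \sqrt{(x)}$ for some $x$, i.e. $(2) \Rightarrow (3)$; and if the full covering property $(1)$ holds and $J$ is an arbitrary radical ideal, the same contradiction gives $(1) \Rightarrow (5)$.

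The decisive step, which I expect to be the main obstacle, is $(2) \Rightarrow (1)$: upgrading the covering property from prime ideals to arbitrary ideals. The key observation is that the complement $T = S \setminus \bigcup_\alpha P_\alpha = \bigcap_\alpha (S \setminus P_\alpha)$ is multiplicatively closed, being an intersection of the multiplicatively closed complements of the primes $P_\alpha$ and containing $1$. Consequently, for any ideal $I$ the hypothesis $I \subseteq \bigcup_\alpha P_\alpha$ is exactly the condition $I \cap T = \emptyset$. By Lemma~\ref{maximaltoexclusionprime} there is then a prime ideal $Q \supseteq I$ maximal with respect to exclusion of $T$; in particular $Q \cap T = \emptyset$, that is, $Q \subseteq \bigcup_\alpha P_\alpha$. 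Since $Q$ is prime, condition $(2)$ applies and yields $Q \subseteq P_\beta$ for some $\beta$, so $I \subseteq Q \subseteq P_\beta$, which is $(1)$. Assembling the arrows, $(1) \Rightarrow (2) \Rightarrow (1)$ and $(2) \Leftrightarrow (3)$ close the covering block, $(1) \Leftrightarrow (5)$ closes the radical-of-principal block, and $(5) \Rightarrow (4) \Rightarrow (3)$ bridges the two, so all five statements are equivalent.
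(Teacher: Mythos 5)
Your proposal is correct and takes essentially the same route as the paper: the same contrapositive covering argument (primes $P_x$ with $x \in P_x$ but $J \nsubseteq P_x$) drives the implications toward the radical-of-principal conditions, the same element argument ($a \in P_\beta$ forces $\sqrt{(a)} \subseteq P_\beta$) goes back, and Krull's Separation Lemma performs the identical reduction of arbitrary-ideal coverings to prime coverings — you place it in $(2) \Rightarrow (1)$, while the paper embeds it in $(3) \Rightarrow (1)$. The remaining differences are cosmetic: your implication graph has a couple of redundant arrows, and your $(1) \Rightarrow (5)$ is the paper's Zariski-topology argument ($V(x) = V(I)$) rephrased contrapositively without the topological language.
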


\begin{proof}
	$(1) \Rightarrow (2)$: Evident.
	
	$(2) \Rightarrow (3)$: On the contrary, let $Q$ be a prime ideal of $S$ such that $Q \neq \sqrt{(s)}$, for all $s \in S$. Observe that \[\sqrt{(s)} =  \bigcap_{s\in P \in \Spec(S)} P, \qquad\forall~s \in S.\] This means that for each $s \in Q$, there is a prime ideal $P_s$ such that $s \in P_s$ and $Q \nsubseteq P_s$. On the other hand, $Q \subseteq  \bigcup_{s\in Q} P_s$.
	
	$(3) \Rightarrow (1)$: First, let $Q = \sqrt{(s)}$ be a prime ideal of $S$ included in $ \bigcup_{\alpha} P_\alpha$. This implies that $s \in P_\alpha$, for some $\alpha$. It is, then, easy to see that \[Q = \sqrt{(s)} \subseteq P_\alpha.\] Now, let $I$ be an arbitrary ideal of $S$ included in $ \bigcup_{\alpha} P_\alpha$. Note that \[T = S \setminus\bigcup_{\alpha} P_\alpha\] is a multiplicatively closed set in $S$ disjoint from $I$. By Lemma \ref{maximaltoexclusionprime}, there is a prime ideal $P$ of $S$ with \[I \subseteq P \subseteq  \bigcup_{\alpha} P_\alpha.\] Therefore, $P$, and so, $I$ is a subset of $P_\alpha$, for some $\alpha$.
	
	$(1) \Rightarrow (5)$: Let $S$ be a compactly packed semiring and $I$ a radical ideal of $S$, i.e., $I = \sqrt{I}$. Set \[ D(I) = \Spec(S) \setminus V(I).\] Clearly, $I$ is not a subset of any prime ideal in $D(I)$. Since $S$ is compactly packed, $I \nsubseteq \bigcup D(I)$. This means that there is an element $x \in I$ which is not in $\bigcup D(I)$. This implies that $V(x) \subseteq V(I)$ because if $P$ is a prime ideal in $V(x)$, then $P$ is not among the prime ideals with $P \nsupseteq I$. This means that $P$ is a prime ideal with $P \supseteq I$. On the other hand, $x \in I$ is equivalent to $(x) \subseteq I$ which implies that $V(I) \subseteq V(x)$. Consequently, $V(x) = V(I)$. In view of Theorem 3.2 in \cite{Nasehpour2018P}, this implies that \[\sqrt{(x)} = \bigcap V(x) = \bigcap V(I) = \sqrt{I} = I.\] Note that $(5)$ implies $(4)$, and $(4)$ implies $(3)$. This completes the proof. 
\end{proof}

\begin{remark}
A ring version of the definition of compactly packed semirings given in Definition \ref{compactlypackedsemirings} is due to C.M. Reis and T.M. Viswanathan \cite{ReisViswanathan1970} who proved a version of Theorem \ref{generalizedpal} for Noetherian rings. The ring version of Theorem \ref{generalizedpal} is due to William Walker Smith (see the main theorem of the paper \cite{Smith1971}), and Pakala and Shores (see Theorem 1 in \cite{PakalaShores1981}).	
\end{remark}

\begin{proposition}\label{examplecompactlypacked2}
Let $(L,+,\cdot)$ be a bounded distributive lattice with finitely many prime ideals. Then, $L$ is a compactly packed semiring.
\end{proposition}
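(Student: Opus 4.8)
The plan is to recognize the bounded distributive lattice $L$ as a commutative semiring and then reduce the compactly packed condition to the finite prime avoidance already established in Theorem \ref{PALforsemirings}. Writing the addition of $L$ as the join $\vee$ and the multiplication as the meet $\wedge$, the bottom element serves as $0$ and the top element as $1$ (with $0 \neq 1$ so that $L$ is a genuine semiring); commutativity and associativity of $\vee$ and $\wedge$, together with the absorbing property $x \wedge 0 = 0$ and the identity $x \wedge 1 = x$, make $(L,\vee,\wedge)$ a commutative semiring, while the left distributive law of the semiring is precisely the lattice identity $r \wedge (s \vee t) = (r \wedge s) \vee (r \wedge t)$ and the right one follows by commutativity.

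The crucial observation is that every ideal of this semiring is subtractive. Indeed, if $I$ is an ideal of $L$ and $a \in I$, then for any $b \in L$ with $b \le a$ we have $b = b \wedge a \in I$, so $I$ is downward closed. Consequently, whenever $x + y = x \vee y \in I$, both $x$ and $y$ lie below $x \vee y$ and hence belong to $I$; in particular both subtractivity conditions of Definition \ref{subtractiveidealsdef} hold automatically. Thus $L$ is a subtractive semiring, so in particular every prime ideal of $L$ is subtractive, i.e.\ $L$ is weak Gaussian in the sense discussed in Example \ref{examplescompactlypacked}.

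It remains to handle an arbitrary prime cover $I \subseteq \bigcup_{\alpha} P_\alpha$. Since $L$ has only finitely many prime ideals, the family $\{P_\alpha\}$ contains only finitely many distinct members, say $P_1,\dots,P_n$, and $I \subseteq \bigcup_{i=1}^{n} P_i$. As each $P_i$ is a subtractive prime ideal, Theorem \ref{PALforsemirings} applies and yields $I \subseteq P_i$ for some $i$. Therefore $L$ satisfies the covering condition of Definition \ref{compactlypackedsemirings} and is compactly packed; this is exactly the instance of Example \ref{examplescompactlypacked}(1) for a weak Gaussian semiring with finite prime spectrum. The only points requiring care are the identification of semiring ideals with order ideals and the reduction of the arbitrary family to a finite subfamily, neither of which presents a genuine obstacle.
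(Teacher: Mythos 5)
Your proof is correct and takes essentially the same route as the paper: show that every ideal of $L$ is subtractive, then use the finiteness of $\Spec(L)$ to reduce an arbitrary prime cover to a finite one and apply Theorem \ref{PALforsemirings} (i.e., the situation of Example \ref{examplescompactlypacked}(1)). The only difference is that you prove subtractivity directly from the downward closure of ideals ($b \le a \in I$ gives $b = b \wedge a \in I$), whereas the paper cites Theorems 3 and 9 of \cite{Nasehpour2016} for this fact, so your argument has the minor advantage of being self-contained.
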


\begin{proof}
In view of Theorem 3 and Theorem 9 in \cite{Nasehpour2016}, each ideal of $L$ is subtractive. In view of Example \ref{examplescompactlypacked}, $L$ is compactly packed because by assumption, $L$ has finitely many prime ideals. This completes the proof.
\end{proof}

\begin{proposition}\label{examplecompactlypacked3}
Let $T = [0,1]$ be a finite chain with the smallest element $0$ and the largest element $1$. Define addition on $T$ as $a+b = \max\{a,b\}$ and multiplication to be null except for the case $a \cdot 1 = 1 \cdot a = a$, for all $a$ and $b$ in $T$. Then, $T$ is a compactly packed semiring.
\end{proposition}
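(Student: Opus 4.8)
The plan is to compute the prime spectrum of $T$ explicitly and then invoke Theorem \ref{generalizedpal}; the whole argument is powered by the single structural observation that \emph{every element other than $1$ squares to $0$}. First I would dispatch the semiring axioms, which are routine. The pair $(T,\max,0)$ is a commutative monoid with neutral element $0$, and $(T,\cdot,1)$ is a commutative monoid with identity $1$: associativity follows by cases on how many of the three factors equal $1$, since deleting a factor equal to $1$ leaves a product unchanged while the presence of two factors different from $1$ collapses the product to $0$. The element $0$ is absorbing, and both distributive laws hold after splitting on whether the relevant scalar equals $1$. Since $1+x=\max\{1,x\}=1\neq 0$ for every $x$, the element $1$ has no additive inverse, so $T$ is a proper semiring, as desired for the purposes of this section.

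Next I would determine the prime ideals. Because $T$ is a chain, $\max\{a,b\}\in\{a,b\}$, so every subset of $T$ is closed under addition; hence an ideal is just a subset containing $0$ that absorbs multiplication, and any ideal containing $1$ contains $t\cdot 1=t$ for all $t$ and therefore equals $T$. Thus every proper ideal is contained in $\mathfrak{m}=T\setminus\{1\}$. The crux is that for each $a\neq 1$ we have $a\cdot a=0$, i.e.\ every non-unit is nilpotent. Consequently, if $P$ is a prime ideal and $a\in\mathfrak{m}$, then $a^{2}=0\in P$ forces $a\in P$; hence $P\supseteq\mathfrak{m}$, and properness gives $P=\mathfrak{m}$. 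Conversely $\mathfrak{m}$ is prime, since the only way to have $ab\notin\mathfrak{m}$ (that is, $ab=1$) is $a=b=1$, so $ab\in\mathfrak{m}$ always yields $a\in\mathfrak{m}$ or $b\in\mathfrak{m}$. Therefore $\Spec(T)=\{\mathfrak{m}\}$.

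Finally I would conclude compact packedness. The same nilpotence computation identifies the nilradical: $\sqrt{(0)}=\{x\in T:x^{n}=0\text{ for some }n\}=\mathfrak{m}$, so the unique prime $\mathfrak{m}$ is the radical of the principal ideal $(0)$. By the implication $(3)\Rightarrow(1)$ of Theorem \ref{generalizedpal}, $T$ is compactly packed. (One could equally argue directly from Definition \ref{compactlypackedsemirings}: since $\Spec(T)$ is a singleton, any family of prime ideals has union $\mathfrak{m}$, and any ideal contained in that union is contained in $\mathfrak{m}$.) I expect the only genuine obstacle to be the spectrum computation, namely recognizing that the ``null except at $1$'' multiplication makes every non-unit nilpotent; this single fact simultaneously pins down $\Spec(T)=\{\mathfrak{m}\}$ and exhibits $\mathfrak{m}$ as the radical of a principal ideal, after which the conclusion is formal.
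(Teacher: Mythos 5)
Your proof is correct, but it takes a genuinely different route from the paper's. The paper's proof is a two-line citation argument: by Proposition 21 of \cite{Nasehpour2016}, every prime ideal of $T$ is subtractive (i.e., $T$ is weak Gaussian), and since $T$ is finite it has only finitely many prime ideals, so compact packedness follows from the finite prime avoidance theorem (Theorem \ref{PALforsemirings}, exactly as in item (1) of Example \ref{examplescompactlypacked}). You instead compute the spectrum outright: from the single observation that $a^{2}=0$ for every $a\neq 1$, any prime ideal must contain $\mathfrak{m}=T\setminus\{1\}$ and hence equal it, so $\Spec(T)=\{\mathfrak{m}\}$ and compact packedness is immediate from Definition \ref{compactlypackedsemirings} (or, as you note, from $\mathfrak{m}=\sqrt{(0)}$ together with the implication $(3)\Rightarrow(1)$ of Theorem \ref{generalizedpal}). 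What your route buys: it is entirely self-contained, needing neither the external citation nor any avoidance machinery, and subtractivity of the primes --- essential to the paper's argument, since Theorem \ref{PALforsemirings} requires it --- becomes irrelevant; it also yields sharper structural information, namely that $T$ is local with $\mathfrak{m}=\Nil(T)$. What the paper's route buys: brevity, and a uniform template (``weak Gaussian with finitely many primes implies compactly packed'') that applies to a whole class of examples, such as Proposition \ref{examplecompactlypacked2}, without computing any spectrum. Both arguments are sound; yours is arguably the more instructive for this particular semiring.
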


\begin{proof}
By Proposition 21 in \cite{Nasehpour2016}, each prime ideal of the commutative semiring $T$ is subtractive. Clearly, $T$ is compactly packed because $T$ is a finite semiring and has finitely many prime ideals. This completes the proof. 
\end{proof}

The topological interpretation of the covering condition for the compactly packed semirings is the following:

\begin{theorem}
	Let $S$ be a compactly packed semiring. In the Zariski topology on $X = \Spec(S)$, if an arbitrary number of points are contained in an open subset then they are contained in a smaller principal open subset.
\end{theorem}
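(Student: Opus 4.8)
The plan is to follow the same route as the proof of Theorem~\ref{primeavoidancetopology}, but to replace the appeal to finite prime avoidance (Theorem~\ref{PALforsemirings}) by the defining covering condition of compactly packed semirings. First I would fix an open subset of $X = \Spec(S)$ and write it as $D(I) = X \setminus V(I)$ for some ideal $I$ of $S$. Let $\{P_\alpha\}$ denote the given family of points, now possibly infinite, that lie in this open set.

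The crux is the dictionary between the topological statement and the covering condition of Definition~\ref{compactlypackedsemirings}. The condition $P_\alpha \in D(I)$ says precisely that $P_\alpha \notin V(I)$, that is, $I \nsubseteq P_\alpha$ for every $\alpha$. I would then invoke the hypothesis that $S$ is compactly packed in its contrapositive form: since $I$ is contained in none of the prime ideals $P_\alpha$ individually, it cannot be contained in their union $\bigcup_{\alpha} P_\alpha$. Hence there exists an element $x \in I$ with $x \notin P_\alpha$ for all $\alpha$.

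It then remains to check that $D(x)$ is the desired smaller principal open subset. From $x \in I$ we get $(x) \subseteq I$, and since $V(\cdot)$ is order-reversing this yields $V(I) \subseteq V(x)$, i.e.\ $D(x) \subseteq D(I)$. On the other hand, $x \notin P_\alpha$ is exactly the statement $P_\alpha \in D(x)$, so every point of the family lies in $D(x)$; combining, $\{P_\alpha\} \subseteq D(x) \subseteq D(I)$, as required. I do not expect a genuine obstacle here: the entire force of the argument is absorbed into the compactly packed hypothesis, which is tailored precisely to make the union $\bigcup_{\alpha} P_\alpha$ avoidable even for an infinite index set, exactly where the finite prime avoidance of Theorem~\ref{PALforsemirings} would no longer apply. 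The only point demanding care is keeping straight the order-reversing correspondence between ideals and closed sets in the Zariski topology, so that the inclusions $(x) \subseteq I$ and $D(x) \subseteq D(I)$ point the right way.
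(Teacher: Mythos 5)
Your proposal is correct and is essentially the paper's intended argument: the paper omits the proof, stating it is the same as that of Theorem~\ref{primeavoidancetopology} with the appeal to Theorem~\ref{PALforsemirings} replaced by the covering condition of Definition~\ref{compactlypackedsemirings}, which is exactly the substitution you make (using the compactly packed hypothesis in contrapositive to extract $x \in I \setminus \bigcup_{\alpha} P_\alpha$ and then verifying $\{P_\alpha\} \subseteq D(x) \subseteq D(I)$).
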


\begin{proof}
In view of the Definition \ref{compactlypackedsemirings} and the proof given for Theorem \ref{generalizedpal}, the proof is essentially the same as the one given in Theorem \ref{primeavoidancetopology}, and so, omitted.
\end{proof}

\section{Zero-divisors of semimodules over commutative semirings}\label{sec:zerodivisors}

\begin{proposition}\label{Mannideals}
Let $M$ be a semimodule over a commutative semiring $S$. Then, the following statements hold:

\begin{enumerate}
	\item For any nonempty subset $X$ of $M$, $\Ann(X)$ is a subtractive ideal of $S$.
	
	\item For a family of nonempty subsets $\{X_\alpha\}_\alpha$ of $M$, we have \[\bigcap_\alpha \Ann(X_\alpha) = \Ann(\cup_\alpha X_\alpha).\]
\end{enumerate}
\end{proposition}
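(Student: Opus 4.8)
The plan is to prove both statements directly from the definition $\Ann(X) = \{s \in S : sX = \{0\}\}$, treating the two parts independently since part (2) does not depend on part (1). Neither statement should be difficult: the real content is bookkeeping with the semimodule axioms, so I expect no genuine obstacle, only the need to invoke associativity and distributivity carefully at the right moments.

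For part (1), I would first verify that $\Ann(X)$ is an ideal and then that it is subtractive. Since $S$ is commutative, the left and right ideal conditions coincide, so it suffices to check the left ideal property. If $a, b \in \Ann(X)$, then $aX = bX = \{0\}$; for any $x \in X$ the distributive law gives $(a+b)x = ax + bx = 0 + 0 = 0$, whence $a + b \in \Ann(X)$, so $(\Ann(X),+)$ is a submonoid of $(S,+)$. For the absorption property, if $s \in S$ and $a \in \Ann(X)$, then associativity of multiplication yields $(sa)x = s(ax) = s0 = 0$ for all $x \in X$, so $sa \in \Ann(X)$. This is exactly the argument already used in Proposition \ref{annihilatorsubtractiveideal} and Theorem \ref{annihilatoridealssubtractive}, specialized to a commutative semiring; in fact one could simply cite those results, noting that the present hypothesis requires no subsemimodule structure on $X$ because the ideal and subtractivity verifications only use the equation $aX = \{0\}$ and not closure of $X$ under scalar multiplication. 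To see subtractivity, suppose $a + b \in \Ann(X)$ and $a \in \Ann(X)$. Then for each $x \in X$ we have $(a+b)x = 0$ and $ax = 0$; writing $(a+b)x = ax + bx$ by distributivity, we get $ax + bx = 0$ together with $ax = 0$, and subtractivity of the zero submonoid $\{0\}$ in the commutative monoid $(M,+)$ forces $bx = 0$. Hence $b \in \Ann(X)$, and the symmetric argument handles the case $b \in \Ann(X)$.

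For part (2), I would prove the set equality $\bigcap_\alpha \Ann(X_\alpha) = \Ann\bigl(\bigcup_\alpha X_\alpha\bigr)$ by double inclusion, unwinding quantifiers. An element $s$ lies in the left-hand side if and only if $s \in \Ann(X_\alpha)$ for every $\alpha$, that is, $sx = 0$ for every $\alpha$ and every $x \in X_\alpha$. An element $s$ lies in the right-hand side if and only if $sy = 0$ for every $y \in \bigcup_\alpha X_\alpha$. Since $y \in \bigcup_\alpha X_\alpha$ is precisely the assertion that $y \in X_\alpha$ for some $\alpha$, the two conditions $\forall \alpha\, \forall x \in X_\alpha\ (sx = 0)$ and $\forall y \in \bigcup_\alpha X_\alpha\ (sy = 0)$ are logically identical, giving the equality at once. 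This mirrors Proposition \ref{intersectionannihilatorideals} in the ringoid setting.

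The only point requiring a moment's care is the subtractivity step, where I must use that the singleton $\{0\}$ is subtractive as a submonoid of $(M,+)$, i.e. that $ax + bx = 0$ with $ax = 0$ forces $bx = 0$. Because $(M,+,0)$ is a commutative monoid rather than a group, this is not automatic from cancellation; however, it holds trivially here since $ax = 0$ makes the equation read $bx = 0$ directly, so no genuine subtractivity hypothesis on $M$ is needed. Thus I anticipate the entire proof to be routine, and in the write-up I would likely compress part (1) to a reference to Proposition \ref{annihilatorsubtractiveideal} and part (2) to the elementary quantifier computation above.
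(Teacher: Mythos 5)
Your proposal is correct and follows essentially the route the paper intends: the paper omits the proof, stating it is similar to Theorem~\ref{annihilatoridealssubtractive} and Proposition~\ref{intersectionannihilatorideals}, and your write-up is exactly those verifications transplanted to the semimodule setting, with the right observation that commutativity of $S$ (rather than closure of $X$ under scalars, as in Proposition~\ref{annihilatorsubtractiveideal}) yields the two-sided absorption $(sa)x = s(ax) = 0$. Your handling of subtractivity ($ (a+b)x = ax + bx = 0 + bx = bx $, so $(a+b)x=0$ and $ax=0$ force $bx=0$) and the quantifier argument for part~(2) match the paper's earlier proofs, so nothing further is needed.
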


\begin{proof}
The proofs are similar to the proof of Theorem \ref{annihilatoridealssubtractive} and Proposition \ref{intersectionannihilatorideals}, and so, omitted.
\end{proof} 

\begin{proposition}\label{zerodivisorssemimoduleunionradicalideals}
	Let $S$ be a commutative semiring and $M$ an $S$-semimodule. Then, the set of zero-divisors $Z(M)$ of $M$ is a union of radicals of $M$-annihilator ideals of $S$; more precisely, \[Z(M) = \bigcup_{0 \neq x \in M} \sqrt{\Ann(x)}.\]
\end{proposition}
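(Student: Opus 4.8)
The plan is to prove the two inclusions $Z(M) \subseteq \bigcup_{0 \neq x \in M} \sqrt{\Ann(x)}$ and $\bigcup_{0 \neq x \in M} \sqrt{\Ann(x)} \supseteq Z(M)$ separately, and the easier direction is the latter. For the containment $\bigcup_{0 \neq x \in M} \sqrt{\Ann(x)} \subseteq Z(M)$, I would take an arbitrary nonzero $x \in M$ and an element $s \in \sqrt{\Ann(x)}$. By definition of the radical there is some $n \in \mathbb{N}$ with $s^n \in \Ann(x)$, i.e.\ $s^n x = 0$. I would then choose $n$ minimal with this property. If $n = 1$, then $sx = 0$ with $x \neq 0$, so $s$ is a zero-divisor on $M$ and we are done. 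If $n > 1$, set $y = s^{n-1}x$; by minimality $y \neq 0$, while $sy = s^n x = 0$, so again $s \in Z(M)$.

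For the reverse containment $Z(M) \subseteq \bigcup_{0 \neq x \in M} \sqrt{\Ann(x)}$, I would take $s \in Z(M)$, which by the definition recalled in the introduction means there is a nonzero $m \in M$ with $sm = 0$. Then $s \in \Ann(m)$ with $m \neq 0$, and since $\Ann(m) \subseteq \sqrt{\Ann(m)}$ always holds, we get $s \in \sqrt{\Ann(m)} \subseteq \bigcup_{0 \neq x \in M} \sqrt{\Ann(x)}$. This direction is essentially immediate from unwinding definitions.

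I do not expect a serious obstacle here, since both inclusions reduce to definition-chasing; the only point requiring a little care is the minimality argument producing an explicit zero-divisor witness in the first inclusion when $s^n x = 0$ for $n > 1$. One should note that the elements $\Ann(x)$ occurring in the union are genuine $M$-annihilator ideals of $S$ (they are $\Ann(X)$ for the singleton $X = \{x\}$), and that each is a subtractive ideal by Proposition~\ref{Mannideals}(1), so the displayed union is indeed a union of radicals of $M$-annihilator ideals as claimed in the statement. The whole proof should be short, and I would present it as the two inclusions above followed by the observation identifying the index set of the union with nonzero singletons.
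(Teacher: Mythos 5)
Your proof is correct and follows essentially the same route as the paper: the inclusion $Z(M) \subseteq \bigcup_{0 \neq x \in M} \sqrt{\Ann(x)}$ is immediate from $\Ann(x) \subseteq \sqrt{\Ann(x)}$, and the reverse inclusion uses $s^n x = 0$ with $x \neq 0$ to exhibit $s$ as a zero-divisor. Your explicit minimal-exponent argument producing the witness $y = s^{n-1}x$ is just a spelled-out version of the paper's terser contrapositive step (``if $s$ is not a zero-divisor on $M$, then $x = 0$''), so the two proofs coincide in substance.
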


\begin{proof}
It is clear that \[Z(M) = \bigcup_{0 \neq x \in M} \Ann(x) \subseteq  \bigcup_{0 \neq x \in M} \sqrt{\Ann(x)}.\] Now, let $s \in \sqrt{\Ann(x)}$, for some nonzero $x$ in $M$. This implies that $s^n x = 0$, for some positive integer $n$. If $s$ is not a zero-divisor on $M$, then $x = 0$, a contradiction. Thus $s \in Z(M)$ and the proof is complete.
\end{proof}

\begin{remark}
Proposition \ref{zerodivisorssemimoduleunionradicalideals}, which is a generalization of Proposition 1.15 in \cite{AtiyahMacdonald2016}, states that the set of zero-divisors of a semimodule is a union of radical ideals. This condition that the set of zero-divisors of a module over a commutative ring is a union of prime ideals is of special interests \cite[Theorem 6.1]{Matsumura1989}. We investigate this condition in the context of semimodule theory.
\end{remark}

The following is a generalization of Corollary 2.9 in \cite{Nasehpour2021}.

\begin{theorem}\label{accdccannidealsvfzd}
Let $M$ be a semimodule over a commutative semiring $S$. If $S$ has a.c.c. and d.c.c. on its $M$-annihilator ideals, then $M$ has very few zero-divisors.
\end{theorem}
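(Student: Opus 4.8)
The plan is to show that $Z(M)$ is a \emph{finite} union of ideals of the form $\Ann(x)$ with $0 \neq x \in M$, each of which is an associated prime of $M$; this is precisely the assertion that $M$ has very few zero-divisors. Throughout, write $\Sigma = \{\Ann(x) : 0 \neq x \in M\}$. First I would record the coarse identity $Z(M) = \bigcup_{0 \neq x \in M} \Ann(x)$, which is immediate from the definition of a zero-divisor and is the unradicalized version of Proposition \ref{zerodivisorssemimoduleunionradicalideals}. Invoking the a.c.c.\ on $M$-annihilator ideals, every member of $\Sigma$ is contained in a maximal member of $\Sigma$, so $Z(M)$ already equals the union of the maximal elements of $\Sigma$.

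The next step is to check that each maximal element $P = \Ann(x)$ of $\Sigma$ is a (subtractive) prime ideal, hence lies in $\Ass(M)$. It is proper since $1x = x \neq 0$; and if $ab \in P$ while $a \notin P$, then $ax \neq 0$, whereas $\Ann(x) \subseteq \Ann(ax)$ by associativity and commutativity of the scalar action, so maximality forces $\Ann(ax) = \Ann(x) = P$, and from $b(ax) = (ab)x = 0$ we obtain $b \in \Ann(ax) = P$. This is the mechanism of Theorem \ref{maximalannihilatoridealssubtractiveprime} specialized to cyclic subsemimodules. At this stage $Z(M) = \bigcup_{P} P$ is a union of associated primes, using only the a.c.c.

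The crux, and the step I expect to be the main obstacle, is to deduce from the d.c.c.\ that there are only finitely many maximal elements of $\Sigma$. I would argue by contradiction: suppose $P_1, P_2, \dots$ is an infinite sequence of pairwise distinct maximal elements, say $P_i = \Ann(x_i)$. By Proposition \ref{Mannideals}, each finite intersection $\bigcap_{i=1}^{k} P_i = \Ann(\{x_1, \dots, x_k\})$ is again an $M$-annihilator ideal, so the descending chain $\bigcap_{i=1}^{1} P_i \supseteq \bigcap_{i=1}^{2} P_i \supseteq \cdots$ stabilizes by the d.c.c.; hence $\bigcap_{i=1}^{N} P_i \subseteq P_{N+1}$ for some $N$. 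Since $\prod_{i=1}^{N} P_i \subseteq \bigcap_{i=1}^{N} P_i \subseteq P_{N+1}$ and $P_{N+1}$ is prime, Theorem \ref{primeidealringoidthm} applied inductively to products of ideals yields $P_i \subseteq P_{N+1}$ for some $i \leq N$. As $P_i$ and $P_{N+1}$ are both maximal in $\Sigma$, this gives $P_i = P_{N+1}$, contradicting distinctness.

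Combining the three steps, the maximal elements of $\Sigma$ form a finite set $\{P_1, \dots, P_t\} \subseteq \Ass(M)$ with $Z(M) = \bigcup_{j=1}^{t} P_j$, which is exactly the statement that $M$ has very few zero-divisors. The one point demanding care is the passage from the stabilized intersection to a single containment $P_i \subseteq P_{N+1}$: because the product $\prod_{i=1}^{N} P_i$ is a set of products rather than an ideal, I would peel off one factor at a time, at each stage replacing the product set $P_{j}\cdots P_{N}$ by the ideal $\bigcap_{i\geq j} P_i$ that contains it, so as to stay within the hypotheses of Theorem \ref{primeidealringoidthm}.
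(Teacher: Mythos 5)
Your proposal is correct and follows essentially the same route as the paper's proof: a.c.c.\ yields maximal $M$-annihilator ideals which are prime (the paper cites an earlier result where you prove it inline via the $\Ann(x) \subseteq \Ann(ax)$ mechanism), and d.c.c.\ applied to the descending chain of finite intersections, combined with Proposition \ref{multiplicationidealsubsetintersection} and primeness, forces the set of maximal elements to be finite. Your explicit care in peeling off one factor at a time so that Theorem \ref{primeidealringoidthm} is only ever applied to ideals is exactly the detail the paper compresses into its citation of Proposition \ref{multiplicationidealsubsetintersection} and Theorem \ref{primenesscriterionsemiringthm}.
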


\begin{proof}
Since $S$ has a.c.c. on its $M$-annihilator ideals, the maximal elements of the set \[\mathcal{A} = \{\Ann(x) : x \in M \setminus \{0\}\}\] exist and are prime ideals of $S$ (Theorem 2.8 in \cite{Nasehpour2021}). Let $P_i = \Ann(x_i)$ be the maximal elements of $\mathcal{A}$. Our claim is that the number of such $P_i$s is finite. On the contrary, let $\{P_i\}_{i=1}^{+\infty}$ be among the maximal elements of $\mathcal{A}$. Since $S$ has d.c.c. on its $M$-annihilator ideals, the descending chain \[P_1 \supseteq P_1 \cap P_2 \supseteq \cdots \] of $M$-annihilator ideals of $S$ terminates at some point, say at $r$. This implies that \[P_1 \cap \dots \cap P_r = P_1 \cap \dots \cap P_r \cap P_{r+1} \subseteq P_{r+1}.\] Since $P_{r+1}$ is prime, by Proposition \ref{multiplicationidealsubsetintersection} and Theorem \ref{primenesscriterionsemiringthm}, $P_i \subseteq P_{r+1}$ for some $i \leq r$, contradicting the maximality of $P_i$s. Thus $Z(M) = \bigcup_{i=1}^{r} P_i$ and the proof is complete. 
\end{proof}

The following is a generalization of Theorem 49 in \cite{Nasehpour2016}.

\begin{corollary}\label{accannidealsvfzd}
Let $S$ be a commutative semiring. If $S$ has a.c.c. on its annihilator ideals, then $S$ has very few zero-divisors.
\end{corollary}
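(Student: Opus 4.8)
The plan is to deduce this from Theorem \ref{accdccannidealsvfzd} by taking $M = S$, viewing the semiring $S$ as a semimodule over itself. Under this identification the $M$-annihilator ideals of $S$ are exactly the annihilator ideals of $S$ in the usual sense, and $Z(M) = Z(S)$. The hypothesis supplies a.c.c. on these annihilator ideals, but Theorem \ref{accdccannidealsvfzd} also demands d.c.c.; the whole content of the corollary is therefore the implication that, \emph{for annihilator ideals}, a.c.c. already forces d.c.c. Once this is shown, Theorem \ref{accdccannidealsvfzd} applies verbatim (note $M = S$ is nonzero since $0 \neq 1$) and yields that $S$ has very few zero-divisors.

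The key tool is the antitone Galois connection furnished by the annihilator operator on subsets of $S$. First I would record three formal properties that hold directly from the definition $\Ann(X) = \{s \in S : sX = \{0\}\}$: the map $\Ann$ is order-reversing; one has $X \subseteq \Ann(\Ann(X))$ for every nonempty $X \subseteq S$; and consequently the triple-annihilator identity $\Ann(\Ann(\Ann(X))) = \Ann(X)$ holds. The last follows formally by applying the inclusion $Y \subseteq \Ann(\Ann(Y))$ to $Y = \Ann(X)$ for one containment, and by applying $\Ann$ to $X \subseteq \Ann(\Ann(X))$ for the reverse. In particular, every annihilator ideal $I = \Ann(X)$ satisfies $\Ann(\Ann(I)) = I$.

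With these identities available, I would argue as follows. Let $I_1 \supseteq I_2 \supseteq \cdots$ be any descending chain of annihilator ideals of $S$. Applying $\Ann$ reverses inclusions, producing an ascending chain of annihilator ideals $\Ann(I_1) \subseteq \Ann(I_2) \subseteq \cdots$, which stabilizes by the assumed a.c.c.: there is an $N$ with $\Ann(I_j) = \Ann(I_N)$ for all $j \geq N$. Applying $\Ann$ once more and invoking $\Ann(\Ann(I_j)) = I_j$ gives $I_j = I_N$ for all $j \geq N$, so the original chain stabilizes and d.c.c. holds. Having established both chain conditions, I would conclude by citing Theorem \ref{accdccannidealsvfzd} for $M = S$.

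The only real obstacle is a verification rather than a difficulty: one must confirm that the annihilator identities, and in particular $\Ann(\Ann(I)) = I$ for annihilator ideals, survive the passage from rings to semirings. Since these rest solely on the relation $sX = \{0\}$ and on order-theoretic reasoning, they never invoke additive inverses and hence remain valid in the semiring setting; once this is in place, the a.c.c.$\Rightarrow$d.c.c. step is purely formal.
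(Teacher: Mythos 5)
Your proposal is correct and follows essentially the same route as the paper: both reduce to Theorem \ref{accdccannidealsvfzd} with $M = S$ after establishing that a.c.c. on annihilator ideals already forces d.c.c. The only difference is that the paper cites this implication from Corollary 1.4 of \cite{BehzadipourNasehpour2023}, whereas you prove it inline via the standard order-reversing/double-annihilator (Galois connection) argument, which indeed carries over to semirings verbatim since it never uses additive inverses.
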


\begin{proof}
By Corollary 1.4 in \cite{BehzadipourNasehpour2023}, if $S$ has a.c.c. on its annihilator ideals, then $S$ has d.c.c. on its annihilator ideals. Thus by Theorem \ref{accdccannidealsvfzd}, $S$ has very few zero-divisors and the proof is complete. 
\end{proof}

Let $R$ be a Noetherian commutative ring with identity and $M$ a Noetherian unital $R$-module. If an ideal $I$ is a subset of $Z(M)$, then $I \subseteq P$, for some $P \in \Ass(M)$ (\cite[Theorem 82]{Kaplansky1974}). Kaplansky on p. 56 of his book \cite{Kaplansky1974} describes this result ``among the most useful in the theory of commutative rings''. As an application of prime avoidance for semirings, we generalize this in the context of semimodule theory as follows:

\begin{theorem}\label{amongmostusefulkaplansky}
Let $I$ be an ideal of a commutative semiring $S$ and $M$ an $S$-semimodule. Also, let $S$ have a.c.c. and d.c.c. on its $M$-annihilator ideals. If $I \subseteq Z(M)$, then $I \subseteq P$, for some $P \in \Ass(M)$.
\end{theorem}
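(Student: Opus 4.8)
The plan is to reduce the statement to the finite prime avoidance theorem for semirings (Theorem~\ref{PALforsemirings}), using the two chain conditions to replace the a priori infinite union describing $Z(M)$ by a finite union of prime ideals. First I would invoke Theorem~\ref{accdccannidealsvfzd}: since $S$ has a.c.c. and d.c.c. on its $M$-annihilator ideals, $M$ has very few zero-divisors, so that $Z(M) = \bigcup_{i=1}^{r} P_i$ for finitely many prime ideals $P_1, \dots, P_r \in \Ass(M)$. Inspecting the proof of that theorem, each $P_i$ arises as a maximal element of $\mathcal{A} = \{\Ann(x) : x \in M \setminus \{0\}\}$, hence is of the form $P_i = \Ann(x_i)$ for some nonzero $x_i \in M$.

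Next I would record that each $P_i$ is subtractive. This is immediate from Proposition~\ref{Mannideals}(1), since $P_i = \Ann(\{x_i\})$ is the annihilator of a nonempty subset of $M$. Thus $P_1, \dots, P_r$ is a finite family of subtractive prime ideals of $S$ with $I \subseteq Z(M) = \bigcup_{i=1}^{r} P_i$. Applying Theorem~\ref{PALforsemirings}---where in fact all the $P_i$ are prime, so the hypothesis that at least $n-2$ of them be prime is satisfied trivially---I conclude that $I \subseteq P_i$ for some $i$, and since $P_i \in \Ass(M)$ this is exactly the desired conclusion.

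The bulk of the work is carried by the two cited results, so the argument itself is short; the conceptual point is to see why the chain conditions are indispensable. Proposition~\ref{zerodivisorssemimoduleunionradicalideals} by itself only yields $Z(M) = \bigcup_{0 \neq x \in M} \sqrt{\Ann(x)}$, an arbitrary union of radical ideals, to which the finite prime avoidance theorem cannot be applied. The a.c.c. and d.c.c. hypotheses are precisely what collapse this into a \emph{finite} union of prime (indeed associated) ideals through Theorem~\ref{accdccannidealsvfzd}, after which finite prime avoidance finishes the job. The only step requiring any care is confirming that the primes $P_i$ so produced are genuinely subtractive annihilator primes, so that the hypotheses of Theorem~\ref{PALforsemirings} are met.
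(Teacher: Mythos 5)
Your proof is correct and takes essentially the same route as the paper: the paper's own (very terse) proof simply invokes Theorem~\ref{PALforsemirings}, implicitly relying on exactly the reduction you spell out, namely using the chain conditions via Theorem~\ref{accdccannidealsvfzd} to write $Z(M)$ as a finite union of associated primes, each subtractive because it is an annihilator. Your version is a faithful and more careful elaboration of the argument the paper leaves implicit.
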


\begin{proof}
If $I \subseteq Z(M)$, then by Theorem \ref{PALforsemirings} (also see \S7 in \cite{Nasehpour2016}) $I \subseteq P$, for some $P \in \Ass(M)$ and the proof is complete. 
\end{proof}

\begin{remark}
	Theorem \ref{amongmostusefulkaplansky} provides numerous examples of proper semimodules having Property (A). Recall that an $S$-semimodule $M$ has Property (A) if each finitely generated ideal $I \subseteq Z(M)$ has a nonzero annihilator in $M$ \cite[Definition 2.14]{Nasehpour2021}. Also, note that we define a semimodule to be proper if it is not a module. 	
\end{remark}

Davis defines a commutative ring $R$ to have few zero-divisors if $Z(R)$ is a finite union of prime ideals. Next, he proves that a commutative ring $R$ has few zero-divisors if and only if its total quotient ring $Q(R)$ is semi-local (see pages 203 and 204 in \cite{Davis1964}).

\begin{definition}\label{semiringfewzerodivisors}
	We say a commutative semiring $S$ has few zero-divisors if $Z(S)$ is a finite union of subtractive prime ideals of $S$.
\end{definition}

\begin{remark}
	Since any prime ideal in $\Ass(S)$ is of the form of $\Ann(x)$, for some $x \in S$, it is subtractive. Therefore, if a commutative semiring has very few zero-divisors, then it needs to have few zero-divisors! It follows that if a commutative semiring $S$ has a.c.c. on its annihilator ideals, then it has (very) few zero-divisors (Corollary \ref{accannidealsvfzd}).
\end{remark}

Localization of a commutative semiring $S$ at a multiplicative closed set $U$ of $S$ is defined similarly to its counterparts in commutative ring theory \cite[\S5]{Nasehpour2018S}. Observe that if $S$ is a commutative semiring, then $S \setminus Z(S)$ is multiplicatively closed in $S$. The localization of $S$ at $S \setminus Z(S)$ is called to be the total quotient semiring of $S$, denoted by $Q(S)$. Note that a semiring is semi-local if it has finitely many maximal ideals \cite[Definition 3.14]{Nasehpour2018S}.

The following is an extension of Davis' result on rings having few zero-divisors and another example of an application of prime avoidance in semiring theory:

\begin{theorem}\label{fzdtquotientsemiringsemilocal}
	Let a commutative semiring $S$ have few zero-divisors. Then, its total quotient semiring $Q(S)$ is semi-local.
\end{theorem}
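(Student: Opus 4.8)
The plan is to exploit the order-preserving correspondence between the prime spectrum of the total quotient semiring $Q(S) = T^{-1}S$, where $T = S \setminus Z(S)$, and the primes of $S$ disjoint from $T$, and then to bound the maximal ones using prime avoidance. First I would unwind the hypothesis: by Definition \ref{semiringfewzerodivisors}, we may write $Z(S) = \bigcup_{i=1}^n P_i$ where each $P_i$ is a subtractive prime ideal of $S$. Since $S$ is commutative, $T = S \setminus Z(S)$ is multiplicatively closed, and $Q(S)$ is by construction the localization $T^{-1}S$.

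Next I would invoke the localization correspondence for commutative semirings developed in \cite{Nasehpour2018S}: the assignment $P \mapsto T^{-1}P$ is an inclusion-preserving bijection between the prime ideals $P$ of $S$ satisfying $P \cap T = \emptyset$ and the prime ideals of $Q(S)$. Observing that $P \cap T = \emptyset$ is equivalent to $P \subseteq Z(S)$, this identifies the primes of $Q(S)$ with exactly the primes of $S$ contained in $Z(S)$. Because the correspondence respects inclusions, the maximal ideals of $Q(S)$ (which are in particular prime) correspond precisely to the prime ideals of $S$ that are maximal among those contained in $Z(S)$.

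The decisive reduction is then prime avoidance. For an arbitrary prime $P$ of $S$ with $P \subseteq Z(S) = \bigcup_{i=1}^n P_i$, since the $P_i$ are subtractive and prime, Theorem \ref{PALforsemirings} yields $P \subseteq P_i$ for some $i$. Consequently every prime contained in $Z(S)$ lies below one of the finitely many $P_i$, so the primes maximal among those contained in $Z(S)$ form a subset of the maximal elements of the finite collection $\{P_1, \dots, P_n\}$. Transporting this conclusion back through the correspondence, $Q(S)$ has at most $n$ maximal ideals, and is therefore semi-local.

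I expect the main obstacle to be the careful justification of the spectral correspondence in the semiring setting rather than the finiteness itself. In particular one must confirm that maximal ideals of $Q(S)$ are prime and that the bijection carries inclusion-maximal primes disjoint from $T$ to maximal ideals of $Q(S)$; once these localization facts are secured, the finiteness is immediate from prime avoidance.
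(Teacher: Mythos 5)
Your proposal is correct and follows essentially the same route as the paper's own proof: both invoke the localization correspondence of \cite{Nasehpour2018S} to identify primes of $Q(S)$ with primes of $S$ contained in $Z(S)$, then apply Theorem \ref{PALforsemirings} to the decomposition $Z(S)=\bigcup_{i=1}^{n}P_i$ into subtractive primes to conclude that every such prime lies under some $P_i$, so the maximal ideals of $Q(S)$ are among the extensions of the $P_i$. Your closing remark about verifying that maximal ideals of $Q(S)$ are prime and that the correspondence preserves inclusions is a fair point of care, but it is exactly the same localization machinery the paper relies on implicitly.
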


\begin{proof}
	Let $S$ be a commutative semiring. By Theorem 5.4 in \cite{Nasehpour2018S}, the prime ideals of $Q(S)$ is in a one-to-one correspondence with the prime ideals of $S$ disjoint from $S \setminus Z(S)$. Consequently, any prime ideal of $Q(S)$ corresponds to a prime ideal $P$ of $S$ with $P \subseteq Z(S)$. By definition, since $S$ has few zero-divisors, \[Z(S) = P_1 \cup P_2 \cup \dots \cup P_n,\] where $P_i$s are subtractive prime ideals of $S$. By prime avoidance (Theorem \ref{PALforsemirings}), we have $P \subseteq P_i$, for some $i \in \mathbb{N}_n$. Therefore, each prime ideal of $Q(S)$ is a subset of a prime ideal of $Q(S)$ obtained by the extension of $P_i$ in $Q(S)$ for some $i$. This means that the only maximal ideals of $Q(S)$ are the extended ideals of $P_i$s in $Q(S)$. Thus $Q(S)$ is semi-local and the proof is complete.    
\end{proof}

\begin{theorem}\label{extensionannihilatorideals}
If $I= \Ann(x)$ for some element $x$ in a commutative semiring $S$ and $U \subseteq S \setminus Z(S)$ is multiplicatively closed in $S$, then the localization of $I$ at $U$ is $I_U = \Ann(x/1)$ for the element $x/1 \in S_U$.
\end{theorem}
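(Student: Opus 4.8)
The plan is to unwind the definitions of the localization $S_U$, of the extended ideal $I_U$, and of the annihilator $\Ann(x/1)$ taken in $S_U$, and then to verify the two inclusions directly. Recall (cf. \S5 in \cite{Nasehpour2018S}) that $S_U = \{a/u : a \in S,\ u \in U\}$, where $a/u = b/v$ precisely when $wav = wbu$ for some $w \in U$; since here every element of $U$ lies in $S \setminus Z(S)$, such a $w$ is a non-zero-divisor and may be cancelled, so the defining relation simplifies to $a/u = b/v$ if and only if $av = bu$. With this in hand, the extended ideal is $I_U = \{a/u : a \in I,\ u \in U\}$ and, viewing $S_U$ as a semimodule over itself, $\Ann(x/1) = \{s/u \in S_U : (s/u)(x/1) = 0\}$, where $(s/u)(x/1) = sx/u$.

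First I would prove $I_U \subseteq \Ann(x/1)$. Take $a/u \in I_U$ with $a \in I = \Ann(x)$, so that $ax = 0$. Then $(a/u)(x/1) = ax/u = 0/u$, and $0/u = 0/1$ because $0 \cdot 1 = 0 \cdot u = 0$; hence $a/u \in \Ann(x/1)$. This direction is routine and does not even require the non-zero-divisor hypothesis.

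The substance is the reverse inclusion $\Ann(x/1) \subseteq I_U$. Let $s/u \in \Ann(x/1)$, so $sx/u = 0/1$. By the simplified equivalence relation this yields $sx = sx \cdot 1 = 0 \cdot u = 0$, that is, $s \in \Ann(x) = I$, and therefore $s/u \in I_U$ at once. If one prefers not to invoke the cancellation, the equivalence relation in its existential form only gives $wsx = 0$ for some $w \in U$; then $ws \in \Ann(x) = I$, and one rewrites $s/u = ws/(wu)$, which is legitimate since $s(wu) = (ws)u$ by commutativity, thereby exhibiting a representative with numerator in $I$ and concluding $s/u \in I_U$. Combining the two inclusions gives $I_U = \Ann(x/1)$, as claimed.

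The hard part will be purely bookkeeping around the absence of subtraction: equalities in $S_U$ must be handled through the existential relation $wav = wbu$ rather than by naive cancellation, and the hypothesis $U \subseteq S \setminus Z(S)$ is exactly what permits the auxiliary factor $w$ to be removed, so that $sx/u = 0$ genuinely forces $sx = 0$. Once this point is settled, both inclusions reduce to the commutativity of $S$, the definition of $\Ann(x)$, and the description of the extended ideal, so no further obstacle arises.
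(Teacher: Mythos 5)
Your overall strategy---verifying both inclusions through the defining congruence of $S_U$---is the same as the paper's, but your primary route rests on a claim that is false for semirings: a non-zero-divisor need not be multiplicatively cancellable, so the relation $a/u=b/v$ does \emph{not} simplify to $av=bu$. In a ring, $wav=wbu$ with $w$ a non-zero-divisor gives $w(av-bu)=0$ and hence $av=bu$, but without subtraction this argument is unavailable. Concretely, take the three-element chain $L=\{0,a,1\}$ viewed as a bounded distributive lattice, i.e., the commutative semiring with $+=\vee$ and $\cdot=\wedge$: the element $a$ is a non-zero-divisor (since $a\wedge t=0$ forces $t=0$), yet $a\wedge a=a\wedge 1$ while $a\neq 1$. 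What the hypothesis $U\subseteq S\setminus Z(S)$ \emph{does} license is cancellation against zero: from $w(sx)=0$ with $w\in U$ one concludes $sx=0$, because that implication is literally the definition of a non-zero-divisor. Once you replace your ``simplified equivalence relation'' by this repaired step, your first argument becomes exactly the paper's proof: $(sx)/u=0/1$ yields $t(sx)=0$ for some $t\in U$, whence $sx=0$ and $s\in I$, so $s/u\in I_U$.

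Your fallback argument, on the other hand, is fully correct and is in fact stronger than both your primary route and the paper's proof: from $wsx=0$ you get $ws\in\Ann(x)=I$, and $s/u=(ws)/(wu)$ is a legitimate rewriting (witnessed by $w'=1\in U$, since $s(wu)=(ws)u$ by commutativity), so $s/u\in I_U$ with no appeal to the non-zero-divisor hypothesis whatsoever. Since the easy inclusion $I_U\subseteq\Ann(x/1)$ also never uses that hypothesis, this route proves the equality $I_U=\Ann(x/1)$ for an \emph{arbitrary} multiplicatively closed subset $U$ of $S$, which is more general than the stated theorem. I would suggest leading with that argument, deleting the false simplification claim, and perhaps remarking that the hypothesis $U\subseteq S\setminus Z(S)$ is needed only for the way the theorem is applied later (in Corollary \ref{acctqsemilocalKasch}, where $U=S\setminus Z(S)$), not for the equality itself.
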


\begin{proof}
	$\subseteq$: Let $s/u \in I_U$, where $s \in I$ and $u \in U$. From $sx = 0$, it follows that \[s/u \cdot x/1 = sx / u = 0 / u = 0.\] So, $s/u \in \Ann(x/1)$. 
	
	$\supseteq$: Now, let $s/u \in \Ann(x/1)$. So, $(s/u)(x/1) = 0/1$. This means that $(sx)/u = 0/1$. Consequently, there is an element $t \in U$ such that $tsx = 0$. Since $t$ is not a zero-divisor on $S$, we have $sx = 0$ and the proof is complete.   
\end{proof}

A commutative ring $R$ is a Kasch ring if and only if each maximal ideal of $R$ is of the form $\Ann(x)$, for some $x \in R$ (see Definition 8.26 and Corollary 8.28 in \cite{Lam1999}). Kasch rings were named after Friedrich Kasch who was a student of Friedrich Karl Schmidt. Inspired by this, we give the following definition:

\begin{definition}\label{Kaschsemiringdef}
	We say a commutative semiring $S$ is a Kasch semiring if each maximal ideal of $S$ is of the form $\Ann(x)$, for some $x \in S$. 
\end{definition}

\begin{corollary}\label{acctqsemilocalKasch}
	Let a commutative semiring $S$ have a.c.c. on its annihilator ideals. Then, $Q(S)$ is a semi-local Kasch semiring having very few zero-divisors.
\end{corollary}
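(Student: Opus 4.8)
The plan is to derive all three assertions about $Q(S)$ from the finite, annihilator-type description of $Z(S)$ that the a.c.c.\ hypothesis provides. First I would invoke Corollary \ref{accannidealsvfzd} to conclude that $S$ has very few zero-divisors, so that
\[
Z(S) = \bigcup_{i=1}^{n} P_i,
\]
where each $P_i = \Ann(x_i)$ is an associated prime of $S$ with $x_i \neq 0$. By the remark following Corollary \ref{accannidealsvfzd}, $S$ then has few zero-divisors in the sense of Definition \ref{semiringfewzerodivisors}, so Theorem \ref{fzdtquotientsemiringsemilocal} applies and shows that $Q(S)$ is semi-local. Crucially, the proof of that theorem (via the correspondence of primes under localization at $U = S \setminus Z(S)$) identifies the maximal ideals of $Q(S)$ as exactly the extensions of the $P_i$.

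For the Kasch property I would compute these extensions explicitly. Since $U \subseteq S \setminus Z(S)$ is multiplicatively closed and $P_i = \Ann(x_i)$, Theorem \ref{extensionannihilatorideals} gives $(P_i)_U = \Ann(x_i/1)$ in $Q(S)$. Here $x_i/1 \neq 0$: otherwise $t x_i = 0$ for some $t \in U$, which is impossible because $t$ is not a zero-divisor of $S$ and $x_i \neq 0$. Thus every maximal ideal of $Q(S)$ has the form $\Ann(x_i/1)$ for a nonzero $x_i/1 \in Q(S)$, which is precisely the requirement of Definition \ref{Kaschsemiringdef}, so $Q(S)$ is Kasch.

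It remains to show $Q(S)$ has very few zero-divisors, i.e.\ that $Z(Q(S))$ is a union of finitely many associated primes. My approach is to prove the total-quotient dichotomy that each element of $Q(S)$ is either a unit or a zero-divisor: an element $s/u$ with $s \in U$ is a unit with inverse $u/s$, whereas if $s \in Z(S)$ there is a nonzero $a \in S$ with $sa = 0$, whence $(s/u)(a/1) = 0$ with $a/1 \neq 0$, making $s/u$ a zero-divisor. Consequently $Z(Q(S))$ coincides with the set of non-units of $Q(S)$, which is the union of the maximal ideals of $Q(S)$; combined with the previous paragraph this yields $Z(Q(S)) = \bigcup_{i=1}^{n} \Ann(x_i/1)$, a finite union of members of $\Ass(Q(S))$. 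Hence $Q(S)$ has very few zero-divisors, completing all three claims.

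The identifications transported from the proof of Theorem \ref{fzdtquotientsemiringsemilocal} and the computation via Theorem \ref{extensionannihilatorideals} are short and routine. The step I expect to require the most care is the semiring analogue of the classical fact that the non-units of a total quotient ring form the union of its maximal ideals; I would establish this by combining the unit/zero-divisor dichotomy above with the observation that every non-unit generates a proper principal ideal $Sa \neq S$, which therefore lies in some maximal ideal by a Zorn's-lemma argument, while taking care that no prime outside the family $\{(P_i)_U\}$ contributes to $Z(Q(S))$.
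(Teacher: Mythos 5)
Your proposal is correct and follows essentially the same route as the paper's proof: Corollary \ref{accannidealsvfzd} gives very few (hence few) zero-divisors, Theorem \ref{fzdtquotientsemiringsemilocal} gives semi-locality, Theorem \ref{extensionannihilatorideals} identifies the maximal ideals of $Q(S)$ as annihilators $\Ann(x_i/1)$ (Kasch), and the identification $Z(Q(S)) = \bigcup_i \Ann(x_i/1)$ yields very few zero-divisors. Your unit/zero-divisor dichotomy and the verification that $x_i/1 \neq 0$ merely make explicit steps the paper leaves implicit in its appeal to $Z(Q(S)) = \bigcup_{0 \neq x \in Q(S)} \Ann(x)$.
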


\begin{proof}
	Since $S$ has a.c.c. on its annihilator ideals, $S$ has very few zero-divisors (Corollary \ref{accannidealsvfzd}). This means that $Z(S)$ is a finite union of prime ideals in $\Ass(S)$. Therefore by Theorem \ref{fzdtquotientsemiringsemilocal}, $Q(S)$ is semi-local. On the other hand, by Theorem \ref{extensionannihilatorideals}, each maximal ideal of $Q(S)$ is an annihilator of an element in $Q(S)$. Now, since $Z(Q(S)) = \bigcup_{0 \neq x \in Q(S)} \Ann(x)$, it follows that $Q(S)$ has very few zero-divisors and the proof is complete.
\end{proof}

\begin{remark}
	In Theorem 8.31 in \cite{Lam1999}, Lam attributes a ring version of Corollary \ref{acctqsemilocalKasch} to Carl Faith.
\end{remark}

Let $S$ be a commutative semiring and $G$ a commutative monoid. Note that if $f$ is an element of a monoid semiring $S[G]$, its content $c(f)$ is defined to be an ideal of $S$ generated by the coefficients of $f$. In other words, if \[f= s_0 + s_1 X^{g_1} + \dots + s_n X^{g_n},\qquad s_i \in S,~g_i \in G\] is the canonical representation \cite[p. 68]{Gilmer1984} of an element $f$ in the monoid semiring $S[G]$, then the content of $f$ is: \[c(f) = (s_0, s_1, \dots, s_n).\]

\begin{theorem}\label{zerodivisorsmonoidsemimodule1}
Let $S$ be a commutative semiring and $M$ an $S$-semimodule with Property (A) and $Z(M) = \bigcup_{\alpha \in A} P_\alpha$, where $P_\alpha$s are prime ideals of $S$. If $G$ is a cancellative torsion-free commutative monoid and $S$ is compactly packed, then $Z(M[G]) =  \bigcup_{\alpha \in A} P_\alpha[G]$. 	
\end{theorem}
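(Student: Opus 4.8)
The plan is to translate the asserted equality into a statement about contents and then to prove two inclusions, each resting on a different hypothesis. Regard $M[G]$ as a semimodule over the monoid semiring $S[G]$, so that $Z(M[G])$ is a subset of $S[G]$. Writing $f \in S[G]$ in its canonical form $f = s_0 + s_1 X^{g_1} + \dots + s_n X^{g_n}$, membership $f \in P_\alpha[G]$ means that every coefficient lies in $P_\alpha$, which (because $P_\alpha$ is an ideal) is the same as $c(f) \subseteq P_\alpha$. Hence $\bigcup_{\alpha \in A} P_\alpha[G] = \{ f \in S[G] : c(f) \subseteq P_\alpha \text{ for some } \alpha\}$, and the theorem reduces to the claim that $f$ is a zero-divisor on $M[G]$ if and only if $c(f) \subseteq P_\alpha$ for some $\alpha$.

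First I would settle the inclusion $\bigcup_\alpha P_\alpha[G] \subseteq Z(M[G])$, which uses only Property (A). If $c(f) \subseteq P_\alpha$ for some $\alpha$, then $c(f) \subseteq P_\alpha \subseteq Z(M)$, and $c(f) = (s_0,\dots,s_n)$ is finitely generated; Property (A) therefore yields a nonzero $m \in M$ with $c(f) m = 0$, i.e. $s_i m = 0$ for every $i$. Viewing $m$ as a constant element of $M[G]$ gives $f m = \sum_i (s_i m) X^{g_i} = 0$ with $m \neq 0$, so $f \in Z(M[G])$. This half needs neither the compact packedness of $S$ nor the hypotheses on $G$.

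The reverse inclusion is where the remaining hypotheses enter, and its heart is a McCoy-type lemma: if $f h = 0$ for some nonzero $h \in M[G]$, then $c(f) m = 0$ for some nonzero $m \in M$. To prove it I would fix a total order on $G$ compatible with its operation -- available since the cancellative torsion-free commutative monoid $G$ embeds in the torsion-free abelian group $\gp(G)$, which is orderable -- so that every nonzero element of $M[G]$ has a leading term. Choose $h \neq 0$ with $f h = 0$ and $\supp(h)$ of least cardinality. Cancellativity forces the top exponent of $f h$ to arise from the unique pair of leading exponents, so its coefficient is the product of the leading coefficients; as $fh = 0$, that product is $0$, whence the leading coefficient $s$ of $f$ satisfies $\supp(sh) \subsetneq \supp(h)$, while $f(sh) = s(fh) = 0$ forces $sh = 0$ by minimality. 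The decisive point is that one may now simply delete the leading monomial of $f$ to obtain an element $f' \in S[G]$ with $f' h = fh = 0$, since the deleted term $s X^{g_n}$ contributes $X^{g_n}(sh) = 0$ to the product and no subtraction is required. Iterating over the finitely many terms of $f$ gives $s_i h = 0$ for every coefficient, i.e. $c(f) h = 0$; taking any nonzero coefficient $m$ of $h$ yields $c(f) m = 0$. Then $c(f) \subseteq \Ann(m) \subseteq Z(M) = \bigcup_\alpha P_\alpha$, and since $S$ is compactly packed, the ideal $c(f)$ lying in this union of primes lies in a single $P_\alpha$; thus $f \in P_\alpha[G]$.

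The main obstacle is precisely this McCoy-type lemma in a setting without subtraction. In the classical ring proof one peels the top term of $f$ by passing to $f - s X^{g_n}$; here that operation is unavailable, and the argument survives only because the leading coefficient $s$ annihilates the chosen $h$, so that merely dropping the monomial leaves the product unchanged. Organizing this peeling purely through the fixed order on $G$ and the minimality of $\supp(h)$, and checking at each stage that the auxiliary product genuinely has strictly smaller support, is the delicate part; once it is in place, Property (A) and the defining collapse property of compactly packed semirings dispatch the two inclusions as above.
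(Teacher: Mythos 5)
Your proposal is correct and follows essentially the same route as the paper's proof: both directions are reduced to statements about the content $c(f)$, with Property (A) yielding $\bigcup_{\alpha} P_\alpha[G] \subseteq Z(M[G])$ and a McCoy-type lemma plus compact packedness yielding the reverse inclusion. The only difference is that the paper simply cites Theorem 2.1 of \cite{Nasehpour2021} for the McCoy step (a zero-divisor $f$ on $M[G]$ is annihilated by a nonzero element of $M$), whereas you reprove that lemma from scratch via the total order on $\gp(G)$ and the minimal-support peeling argument; your version of that argument is sound (in particular, the observation that $sh=0$ lets you delete the leading monomial without needing subtraction is exactly the right fix for the semiring setting), so your write-up is just a more self-contained rendering of the same proof.
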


\begin{proof}
$(\subseteq)$: If $f$ is a zero-divisor on the monoid semimodule $M[G]$, then by Theorem 2.1 in \cite{Nasehpour2021}, there is a nonzero element $b \in M$ such that $fb = 0$. This means that \[c(f) \subseteq Z(M) = \bigcup_{\alpha \in A} P_\alpha.\] Since $S$ is compactly packed, the ideal $c(f)$ needs to be a subset of one of the prime ideals $P_\alpha$s. This implies $f$ to be an element of $P_\alpha [G]$. 

$(\supseteq)$: On the other hand, if $f \in P_\alpha[G]$, then $c(f)$ is a subset of $P_\alpha$. This implies that $c(f) \subseteq Z(M)$. Now, since $M$ has Property (A), $c(f)$ can be annihilated by a nonzero element $b \in M$. Hence, $f$ is a zero-divisor of on $M[G]$, as required.
\end{proof}

\begin{theorem}\label{zerodivisorsmonoidsemimodule2}
Let $S$ be a Noetherian commutative semiring and $M$ an $S$-semimodule. If $S$ is compactly packed and $G$ is a cancellative torsion-free commutative monoid, then $Z(M[G]) = \bigcup_{P \in \Ass(M)} P[G]$.
\end{theorem}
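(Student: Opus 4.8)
The plan is to deduce this theorem from Theorem \ref{zerodivisorsmonoidsemimodule1} by taking the family $\{P_\alpha\}$ there to be $\Ass(M)$. To invoke that result I must verify its two substantive hypotheses: that $Z(M)$ decomposes as the union of the prime ideals in $\Ass(M)$, and that $M$ has Property (A). The remaining hypotheses — that $G$ is a cancellative torsion-free commutative monoid and that $S$ is compactly packed — are inherited verbatim.

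First I would record that a Noetherian commutative semiring $S$ has a.c.c. on all of its ideals, hence in particular on its $M$-annihilator ideals. This lets me reuse the mechanism from the proof of Theorem \ref{accdccannidealsvfzd}: the set $\mathcal{A} = \{\Ann(x) : x \in M \setminus \{0\}\}$ has maximal elements, and by Theorem 2.8 in \cite{Nasehpour2021} these maximal elements are prime, so they lie in $\Ass(M)$. Writing $Z(M) = \bigcup_{0 \neq x \in M} \Ann(x)$ as in Proposition \ref{zerodivisorssemimoduleunionradicalideals}, every $\Ann(x)$ is contained in such a maximal (prime) member of $\mathcal{A}$, giving $Z(M) \subseteq \bigcup_{P \in \Ass(M)} P$; the reverse inclusion is immediate, since each $P \in \Ass(M)$ equals $\Ann(m)$ for a nonzero $m$ and hence sits inside $Z(M)$. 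This yields the decomposition $Z(M) = \bigcup_{P \in \Ass(M)} P$.

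Next I would establish Property (A), and this is where the compactly-packed hypothesis does real work. Once the decomposition above is available, Property (A) is essentially free: if $I$ is an ideal of $S$ (finitely generated or not) with $I \subseteq Z(M) = \bigcup_{P \in \Ass(M)} P$, then because $S$ is compactly packed the inclusion forces $I \subseteq P$ for some $P \in \Ass(M)$. Since $P = \Ann(m)$ for some nonzero $m \in M$, we obtain $Im = 0$, so $m$ is the required nonzero annihilator of $I$ in $M$, and hence $M$ has Property (A).

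With both hypotheses verified, I would apply Theorem \ref{zerodivisorsmonoidsemimodule1} to the family $\{P : P \in \Ass(M)\}$ to conclude $Z(M[G]) = \bigcup_{P \in \Ass(M)} P[G]$. The main obstacle here is not any single computation but selecting the right route to Property (A): the natural temptation is to invoke Theorem \ref{amongmostusefulkaplansky}, which would additionally demand d.c.c. on $M$-annihilator ideals — a condition that Noetherianity of $S$ does not obviously supply for an arbitrary semimodule $M$, since $\Ass(M)$ need not even be finite. Replacing that appeal with the compactly-packed covering condition sidesteps the d.c.c. requirement and is what makes the argument close. The degenerate case $M = 0$, where both sides are empty, can be dispatched separately.
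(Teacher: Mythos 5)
Your proof is correct, but it is not the paper's route: the paper proves the equality directly rather than as a corollary of Theorem \ref{zerodivisorsmonoidsemimodule1}. There, the decomposition $Z(M)=\bigcup_{P\in\Ass(M)}P$ is simply quoted from Corollary 2.9 of \cite{Nasehpour2021}; the inclusion $(\subseteq)$ is proved exactly as it would go through Theorem \ref{zerodivisorsmonoidsemimodule1} (take $f\in Z(M[G])$, get $b\in M$ nonzero with $fb=0$ from Theorem 2.1 of \cite{Nasehpour2021}, deduce $c(f)\subseteq Z(M)$, then use compact packing to put $c(f)$ inside a single $P\in\Ass(M)$); but the inclusion $(\supseteq)$ never mentions Property (A): if $f\in P[G]$ with $P=\Ann(b)\in\Ass(M)$, then $b$ annihilates $f$ directly. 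So the paper's argument is essentially an inlined, slightly shorter version of yours. Your organization buys two things. First, your derivation of $Z(M)=\bigcup_{P\in\Ass(M)}P$ from a.c.c. alone (maximal elements of $\{\Ann(x): 0\neq x\in M\}$ exist, are prime by Theorem 2.8 of \cite{Nasehpour2021}, and contain every $\Ann(x)$) is self-contained and valid even when $\Ass(M)$ is infinite, which is the honest setting here since $M$ is an arbitrary semimodule over the Noetherian $S$, whereas ``very few zero-divisors'' in Corollary 2.9 is a finiteness statement. Second, the observation that compact packing plus this decomposition yields Property (A) for free is not in the paper, and it is exactly what turns the statement into a formal corollary of Theorem \ref{zerodivisorsmonoidsemimodule1}; your remark that Theorem \ref{amongmostusefulkaplansky} would be the wrong tool (its d.c.c. hypothesis on $M$-annihilator ideals is not supplied by Noetherianity of $S$) is also correct. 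The only cost is that you route the reverse inclusion through Property (A) when the stronger fact $P=\Ann(b)$ is already in hand.
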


\begin{proof}
By Corollary 2.9 in \cite{Nasehpour2021}, $Z(M) = \bigcup_{P \in \Ass(M)} P$.

$(\subseteq)$: Let $f \in Z(M[G])$. By Theorem 2.1 in \cite{Nasehpour2021}, $f \cdot b = 0$, for some $b \in M$. This implies that $c(f) \cdot b = 0$. Therefore, \[c(f) \subseteq Z(M) = \bigcup_{P \in \Ass(M)} P.\] Since $S$ is compactly packed, we have $c(f) \subseteq P$, for some $P \in \Ass(M)$. Thus $f \in P[G]$, for some $P \in \Ass(M)$.

$(\supseteq)$: If $f \in P[G]$, for some $P \in \Ass(M)$, $c(f) \subseteq P$, where there is a $b \in M$ with $P = \Ann(b)$. This implies that $f$ is annihilated by $b$, i.e., $f \in Z(M[G])$ and the proof is complete.
\end{proof}

\section*{Acknowledgments and Dedicatory}

Prof. Dr. J\"{u}rgen Herzog (1941--2024) was a prominent mathematician who, together with Prof. Dr. Winfried Bruns, co-authored one of the most influential textbooks on commutative algebra \cite{BrunsHerzog1998}. He was also a kind soul, helping young mathematicians find their way in mathematics at the research level. This paper is dedicated to his memory. The author is grateful to the anonymous referees for their thorough reading of the paper and their useful comments, which improved its presentation. 

\bibliographystyle{plain}

\end{document}